\newcommand{\no}{\noindent}
\newtheorem{thm}{Theorem}[section]
\newtheorem{prop}[thm]{Proposition}
\newtheorem{cor}[thm]{Corollary}
\newtheorem{rem}[thm]{Remark}
\newtheorem{lem}[thm]{Lemma}
\newtheorem{notation}[thm]{Notation}
\numberwithin{equation}{section}
\newcommand{\R}{\mathbb{R}}
\newcommand{\N}{\mathbb{N}}
\newcommand{\rN}{\mathrm{N}}
\newcommand{\ds}{\displaystyle}
\newcommand{\sm}{\setminus}
\newcommand{\pd}{\partial}
\newcommand{\al}{\alpha}
\newcommand{\ga}{\gamma}
\newcommand{\Ga}{\Gamma}
\newcommand{\De}{\Delta}
\newcommand{\f}{\varphi}
\newcommand{\ome}{\omega}
\newcommand{\Ome}{\Omega}
\renewcommand{\(}{\left(}
\renewcommand{\)}{\right)}
\renewcommand{\lvert}{\left\vert}
\renewcommand{\rvert}{\right\vert}
\DeclareMathOperator*{\esssup}{ess\,sup}
\DeclareMathOperator{\conv}{conv}
\DeclareMathOperator{\supp}{supp}
\DeclareMathOperator{\dist}{dist}
\DeclareMathOperator{\Pos}{Pos}
\def\vect#1{\mbox{\boldmath $#1$}}
\begin{document}

\title{\bf Movement of time-delayed hot spots in Euclidean space for special initial states}

\author{\bf Shigehiro Sakata and Yuta Wakasugi}

\date{\today}

\maketitle

\begin{abstract} 
We consider the Cauchy problem for the damped wave equation under the initial state that the sum of an initial position and an initial velocity vanishes. 
When the initial position is non-zero, non-negative and compactly supported, we study the large time behavior of the spatial null, critical, maximum and minimum sets of the solution. 

The spatial null set becomes a smooth hyper-surface homeomorphic to a sphere after a large enough time.
The spatial critical set has at least three points after a large enough time.
The set of spatial maximum points escapes from the convex hull of the support of the initial position. 
The set of spatial minimum points consists of one point after a large time, and the unique spatial minimum point converges to
the centroid of the initial position at time infinity.\\

\no{\it Keywords and phrases}.
Damped wave equation,
diffusion phenomenon, time-delayed hot spots, Nishihara decomposition\\
\no 2010 {\it Mathematics Subject Classification}:
35L15, 35B38, 35C15, 35B40, 35K05.
\end{abstract}

\section{Introduction}
Let $f$ and $g$ be real-valued smooth functions defined on $\R^n$.
We consider the {\it damped wave equation} with initial data $(f,g)$,
\begin{equation}\label{DWfg}
\begin{cases}
\ds \( \frac{\pd^2}{\pd t^2} -\De + \frac{\pd}{\pd t} \) u(x,t) =0, &x \in \R^n ,\ t>0,\\
\ds \( u, \frac{\pd u}{\pd t} \) (x,0)=(f, g)(x), &x \in \R^n .
\end{cases}
\end{equation}

The damped wave equation describes several phenomena. 
One is the propagation of the wave with friction. 
Another is the diffusion of the heat with finite propagation speed. 
In this paper, we study the equation \eqref{DWfg} in the latter sense. 
We refer to [L] for the deriving process as the heat equation with finite propagation speed (see also [SY, Introduction]).

It is one of the most fundamental properties of the damped wave equation that, as $t$ goes to infinity, the unique classical solution of \eqref{DWfg} approaches to that of the corresponding heat equation,
\begin{equation}
P_n (t) (f+g) (x) = \frac{1}{( 4\pi t)^{n/2}} \int_{\R^n} \exp \( -\frac{r^2}{4t} \) (f+g) (y) dy ,\ x \in \R^n ,\ t>0 ,\ r=\lvert x-y \rvert , 
\end{equation}
if $f+g$ does not vanish. 
Such a property is called the {\it diffusion phenomenon} and has been studied by several researchers [FG, HO, M, MN, Nar, Nis, SY, YM].
We also refer to [I] for the diffusion phenomenon in an exterior domain.

In order to compare the damped wave and heat equations more deeply, in [SY], the authors investigated the shape of the solution of \eqref{DWfg} when $f$ and $g$ are compactly supported, and $f+g$ is non-zero and non-negative. 
In particular, the authors studied the spatial maximum set of the solution of \eqref{DWfg},
\begin{equation}
\mathcal{H} (t) = \left\{ x \in \R^n \lvert u(x,t) = \max u ( \cdot ,t)  \right\} \right. ,\ t>0 .
\end{equation}
Spatial maximum points of the solution of the (usual) heat equation are called {\it hot spots}. 
Since \eqref{DWfg} is the heat equation with finite propagation speed, we call spatial maximum points of the solution of \eqref{DWfg} {\it time-delayed hot spots}. 
The authors showed that $\mathcal{H} (t)$ has the following properties:
\begin{enumerate}[(1)]
\item After a large time, $\mathcal{H} (t)$ is contained in the convex hull of the support of $f+g$. 
\item After a large time, $\mathcal{H} (t)$ consists of one point.
\item As $t$ goes to infinity, the unique time-delayed hot spot converges to the centroid of $h$.
\end{enumerate}
The first and last properties correspond to the results of hot spots shown in [CK, Theorem 1]. 
The second property corresponds to the fact of hot spots indicated in [JS, Introduction]. 
In other words, the large time behavior of time-delayed hot spots is same as that of hot spots. 
Furthermore, the authors gave examples of $(f,g)$ such that $\mathcal{H} (t)$ is contained in the exterior of the convex hull of $f+g$ for some small time, which claims that the short time behavior of time-delayed hot spots is not similar to that of hot spots.

This paper is a continuation of the study in [SY]. 
We investigate the shape of the solution of \eqref{DWfg} when $f$ is non-zero, non-negative and compactly supported, and $f+g$ entirely vanishes. 
In particular, we study the large time behavior of the spatial null, critical, maximum and minimum sets of the solution in terms of the distance from the convex hull of the support of $f$. 
We call spatial minimum points of the solution of \eqref{DWfg} {\it time-delayed cold spots}.
We denote by $CS(f)$ and $d_f$ the convex hull and the diameter of the support of $f$ (see also Notation \ref{notation_main}).

For the spatial null set of the solution, we study its location by the following procedure:
\begin{description}
\item[Step I.1.] After a large time, the solution is negative on the parallel body of $CS(f)$ of radius $\sqrt{2nt} - d_f$ (Proposition \ref{prop_negativity_null}).
\item[Step I.2.] Let $\psi (t)$ be of small order of $t$. 
After a large time, the solution is positive on the region sandwiched between two parallel bodies of $CS(f)$ of radii $\sqrt{2nt}$ and $\psi (t)$ (Proposition \ref{prop_positivity_null}). 
\item[Step I.3.] After a large time, the solution is strictly increasing in each outer unit normal direction of $CS(f)$ on the region sandwiched between two parallel bodies of $CS(f)$ of radii $\sqrt{2nt}-d_f$ and $\sqrt{2nt}$ (Proposition \ref{prop_monotonicity_null}).  
\end{description}
Thus, after a large time, for each outer unit normal direction of $CS(f)$, the solution has spatial zeros in the region sandwiched between two parallel bodies of $CS(f)$ of radii $\sqrt{2nt}-d_f$ and $\sqrt{2nt}$. Implicit function theorem implies that the spatial null set contained in the sandwiched region is a smooth hyper surface homeomorphic to the $(n-1)$-dimensional sphere (Theorem \ref{null}).

For the spatial critical set of the solution, we study its location by the following procedure:
\begin{description}
\item[Step II.1.] After a large time, the solution is strictly increasing in each outer unit normal direction of $CS(f)$ on the region sandwiched between $CS(f)$ and the parallel body of $CS(f)$ of radius $\sqrt{(2n+4)t}-d_f$ (Proposition \ref{prop_positivity_crit}). 
\item[Step II.2.] Let $\psi (t)$ be of small order of $t$. 
After a large time, the solution is strictly decreasing in each outer unit normal direction of $CS(f)$ on the region sandwiched between two parallel bodies of $CS(f)$ of radii $\sqrt{(2n+4)t}$ and $\psi (t)$ (Proposition \ref{prop_negativity_crit}). 
\item[Step II.3.] After a large time, the solution is strictly concave in each outer unit normal direction of $CS(f)$ on the region sandwiched between two parallel bodies of $CS(f)$ of radii $\sqrt{(2n+4)t}-d_f$ and $\sqrt{(2n+4)t}$ (Proposition \ref{prop_concavity_crit}). 
\end{description}
Thus, after a large time, the solution has spatial critical points in $CS(f)$ and the region sandwiched between two parallel bodies of $CS(f)$ of radii $\sqrt{(2n+4)t}-d_f$ and $\sqrt{(2n+4)t}$. 
Implicit function theorem implies that the solution has at least two spatial critical points in the sandwiched region (Theorem \ref{crit1}).
Using the technique in [CK, Theorem 1], we also show that spatial critical points of the solution contained in $CS(f)$ converge to the centroid of $f$ as $t$ goes to infinity (Theorem \ref{crit2}).

For time-delayed hot spots, we show that all of them are contained in the region sandwiched between two parallel bodies of $CS(f)$ of radii $\sqrt{(2n+4)t}-d_f$ and $\sqrt{(2n+4)t}$ after a large time (Theorem \ref{min} (1)). Hence they escape from $CS(f)$.

For time-delayed cold spots, we show that all of them are contained in $CS(f)$ after a large time (Theorem \ref{min} (2)), that the set of time-delayed cold spots consists of one point after a large time (Corollary \ref{unique}), and that the unique time-delayed cold spot converges to the centroid of $f$ as $t$ goes to infinity (Corollary \ref{centroid}). 
Furthermore, we show that spatial maximum points of the absolute value of the solution are time-delayed cold spots after a large time.

Our argument is based on the so-called {\it Nishihara decomposition}.
It decomposes the solution of \eqref{DWfg} with $f=0$ into the diffusive part and the wave part (see Proposition \ref{solution}). 
It was introduced in [Nis] when $n=3$, in [MN] when $n=1$, and in [HO] when $n=2$.
For any dimensional case, Narazaki [Nar] gave a similar decomposition in terms of the Fourier transform.
In our previous results [SY], we gave the Nishihara decomposition for any space dimension without the Fourier transform.
Combining the Nishihara decomposition and the asymptotic expansion of the diffusive part, we can minutely analyze the shape of the solution.

The presentation of the proofs in sections 3, 4 and 5 is a bit lengthy. 
But the meaning of the word ``a large (enough) time'' will be clear. The large time depends on $n$, $d_f$, $\psi$, $\| f \|_1$, $\| f \|_{W^{\ast ,\infty}}$, the inradius (the radius of a maximal inscribed ball) of the support of $f$ and the mass of $f$ around an incenter (the center of a maximal inscribed ball) of the support of $f$.\\

\no
{\bf Acknowledgements.} 
The first author is partially supported by Waseda University Grant for Special Research Project 2014S-175.
The second author is partially supported by JSPS Kakenhi Grant Number 15J01600.

\begin{notation} 
{\rm 
Throughout this paper, we use the following notation:
\begin{itemize}
\item We denote the usual $L^p$ norm by $\|\cdot\|_p$, that is,
\[
\| \phi \|_p=
\begin{cases}
\ds \( \int_{\R^n} \lvert \phi (x) \rvert^p dx\)^{1/p} &( 1\leq p <\infty ),\\
\ds \esssup_{x \in \R^n} \lvert \phi (x) \rvert &(p=\infty) .
\end{cases}
\]
\item For a natural number $\ell$, we denote the Sobolev norm by $\| \cdot \|_{W^{\ell ,\infty}}$, that is,
\[
\| \phi \|_{W^{\ell ,\infty}} 
=\sum_{|\al |\leq \ell} \| \pd^\al \phi \|_\infty .
\]
\item Let $B^n$ and $S^{n-1}$ be the $n$-dimensional unit closed ball and the $(n-1)$-dimensional unit sphere, respectively.
\item For numbers $a$ and $b$, and for two sets $X$ and $Y$, we use the notation (Minkowski sum) $aX+bY = \left\{ ax+by \lvert x \in X,\ y\in Y \right\} \right.$. 
When $Y$ is a singleton $\{ y \}$, we write $aX+Y = aX+y$, for short. 
In particular, we write $B_t^n(x) =tB^n +x$ and $S_t^{n-1}(x) = tS^{n-1}+ x$, that is, the $n$-dimensional closed ball of radius $t$ centered at $x$ and the $(n-1)$-dimensional sphere of radius $t$ centered at $x$, respectively. 
\item For a set $X$ in $\R^n$, we denote by $X^c$, $X^\circ$ and $\bar{X}$ the complement, interior and closure of $X$, respectively.
\item Let $\sigma_n$ denote the $n$-dimensional Lebesgue surface measure.
\item The letter $r$ is always used for $r=\lvert x-y \rvert$.
\end{itemize}
}
\end{notation}

\begin{notation}
\label{notation_pre}
{\rm 
Let $I_\ell$ be the modified Bessel function of order $\ell$ defined in \eqref{Bessel}. We will use the following notation from Section 2:
\begin{enumerate}[(1)]
\item Let $n=1$. Put
\[
\tilde{k}_\ell (r,t) 
= t \( \frac{2}{\sqrt{t^2 -r^2}} \)^{\ell+1} I_{\ell+1} \( \frac{\sqrt{t^2 -r^2}}{2} \) -  2 \( \frac{2}{\sqrt{t^2 -r^2}} \)^\ell I_\ell \( \frac{\sqrt{t^2 -r^2}}{2} \) .
\]
\item Let $n$ be an odd number greater than one. Put
\begin{align*}
k_\ell (s)
&=\frac{1}{2^\ell}\sum_{j=0}^{\infty} \frac{1}{j!(j+\ell )!}\(\frac{s}{2}\)^{2j}
=\frac{I_\ell (s)}{s^\ell},\\
\tilde{k}_\ell (r,t) 
&= t k_{\ell +1} \( \frac{\sqrt{t^2 -r^2}}{2} \) -2 k_\ell \( \frac{\sqrt{t^2 -r^2}}{2} \) .
\end{align*}
\item Let $n$ be an even number. Put
\begin{align*}
k_\ell (s)
&=\sum_{j=0}^{\infty}\frac{1}{ \( 2(j+\ell ) \) !!(2j+1)!!}s^{2j+1}, \\
\tilde{k}_\ell (r,t) 
&= t k_{\ell +1} \( \frac{\sqrt{t^2 -r^2}}{2} \) -2 k_\ell \( \frac{\sqrt{t^2 -r^2}}{2} \) .
\end{align*}
\end{enumerate}
}
\end{notation}

\begin{notation}
\label{notation_main}
{\rm
Let us list up our notation for Sections 3, 4 and 5.
\begin{enumerate}
\item[$(f)$] Let $f$ be a non-zero non-negative smooth function with compact support. We denote by $CS(f)$, $d_f$ and $\rho_f$ the convex hull, the diameter and the inradius of the support of $f$, respectively. 
\item[$(A)$] For $0 \leq \rho_1 < \rho_2$, we denote by $A_f ( \rho_1, \rho_2 )$ the closed annulus 
\[
A_f \( \rho_1 , \rho_2 \) 
= \left. \left\{ x \in \overline{\R^n \sm CS(f)}  \rvert \rho_1 \leq \dist \( x ,   CS(f)  \) \leq \rho_2 \right\} .
\]
We denote by $A^\circ_f (\rho_1 , \rho_2 )$ the interior of $A_f ( \rho_1, \rho_2 )$.
\item[$(\ome)$] For every $\ome \in S^{n-1}$, let $\ome^\perp_\pm$ and $\Pos \{ \ome \}$ denote the half space $\{ x \in \R^n \vert x \cdot ( \pm \ome ) \geq 0 \}$ and the positive hull $\{ \rho \ome \vert \rho \geq 0 \}$, respectively.
\item[$( \rN )$] For each convex body (compact convex set with non-empty interior) $K$ in $\R^n$, let 
\[
\rN K = \left\{ (\xi ,\nu ) \lvert \xi \in \pd K ,\ \nu \in S^{n-1},\ \nu^\perp_- + \xi \supset K \right\} \right. .
\]
Every $(\xi ,\nu ) \in \rN K$ gives one of the unit outer normal directions of $K$. We will use the notion of $\rN K$ when $K = CS(f)$. Here, we remark that $CS(f) = \conv \overline{f^{-1} ( (0,+\infty ) )}$ is a convex body when $f$ satisfies the condition $(f)$. 
\end{enumerate}
}
\end{notation}

\section{Preliminaries}

\subsection{Expression of the solution}

Let $f$ be a smooth function, and $u$ the unique classical solution of \eqref{DWfg} with $f+g=0$. In this subsection, we list up the explicit form of $u$ and its derivatives. 

We denote by $S_n(t) g(x)$ the solution of \eqref{DWfg} with $f=0$. Then, the solution of \eqref{DWfg} is given by 
\begin{equation}
\label{sol_fg}
S_n (t) (f+g) (x) + \frac{\pd}{\pd t} S_n (t) f(x) .
\end{equation}
Thus, the solution of \eqref{DWfg} with $f+g=0$ is given by 
\begin{equation}
\label{sol_f-f}
u(x,t) = \frac{\pd}{\pd t} S_n (t) f(x) .
\end{equation}
The explicit form of $S_n (t) g(x)$ was given in [SY, Proposition 2.1].

\begin{rem}[{[SY, Remark 2.5]}]
\label{recursion}
{\rm 
\begin{enumerate}[$(1)$]
\item Let $n$ be an odd number greater than one. The kernel $k_\ell (s)$ has the following properties:
\[
k_{\ell +1} (s) = \frac{k_\ell'(s)}{s} ,\ k_\ell (0) = \frac{1}{2^\ell \ell !},\ k_\ell' (0) =0 .
\]
\item Let $n$ be an even number. The kernel $k_\ell (s)$ has the following properties:
\[
k_1(s)=\frac{\cosh(s)-1}{s},\ k_\ell (s)=\frac{k_{\ell-1}' (s)-k_{\ell-1}' (0)}{s} ,\ k_\ell (0)=0,\ k_\ell' (0)=\frac{1}{2^\ell \ell !}.
\]
\end{enumerate}
}
\end{rem}

The explicit form of $u$ directly follows from [SY, Proposition 2.8].

\begin{prop}
\label{solution}
Let $f$ be a smooth function, and $u$ the unique classical solution of \eqref{DWfg} with $f+g=0$.
\begin{enumerate}[$(1)$]
\item Let $n=1$. Put
\begin{align*}
\tilde{J}_1(t)f(x) = \frac{e^{-t/2}}{8} \int_{x-t}^{x+t} \tilde{k}_0 (r,t) f(y) dy ,\\
\widehat{\vect{W}}_1(t)f(x) = \frac{1}{2} \( f(x+t) +f(x-t) \) .
\end{align*}
Then, we have
\[
u(x,t)  = \tilde{J}_1(t)f(x)  +e^{-t/2} \widehat{\vect{W}}_1(t)f(x).
\]
\item Let $n$ be an odd number greater than one. Put
\begin{align*}
\ga_n 
&= \frac{c_n}{2^{n-1}}
=2^{-(3n-1)/2} \pi^{-(n-1)/2} ,\\
\tilde{J}_n(t) f(x) 
&=\frac{\ga_n e^{-t/2}}{4}\int_{B_t^n(x)} \tilde{k}_{\frac{n-1}{2}} (r,t) f(y)dy ,\\
\vect{W}_n(t)f(x)
&=c_n\sum_{j=0}^{(n-3)/2}\frac{1}{8^j j!}\(\frac{1}{t}\frac{\pd}{\pd t}\)^{(n-3)/2-j}\(\frac{1}{t}\int_{S_t^{n-1}(x)}f(y)d\sigma_{n-1}(y) \) ,\\
\widehat{\vect{W}}_n(t)f(x) 
&=\ga_n k_{\frac{n-1}{2}} (0) \int_{S_t^{n-1}(x)}f(y)d\sigma_{n-1}(y) ,\\
\widetilde{\vect{W}}_n(t)f(x)
&= \( \widehat{\vect{W}}_n(t) - \frac{1}{2} \vect{W}_n(t) + \frac{\pd}{\pd t} \vect{W}_n(t) \) f(x) .
\end{align*}
Then, we have
\[
u(x,t) 
=\tilde{J}_n(t)f(x) + e^{-t/2} \widetilde{\vect{W}}_n(t)f(x) .
\]
\item Let $n$ be an even number. Put
\begin{align*}
\ga_n
&=\frac{c_n}{2^{n-2}}
=2^{-(3n-2)/2} \pi^{-n/2} ,\\
\tilde{J}_n(t) f(x) 
&=\frac{\ga_n}{4} e^{-t/2} \int_{B_t^n(x)} \tilde{k}_{\frac{n}{2}} (r,t) f(y)dy, \\
\vect{W}_n(t)f(x)
&= 2 c_n \sum_{j=0}^{(n-2)/2}\frac{1}{8^j j!} \(\frac{1}{t}\frac{\pd}{\pd t}\)^{(n-2)/2-j} \int_{B_t^n(x)}\frac{1}{\sqrt{t^2-r^2}}f(y)dy ,\\
\widehat{\vect{W}}_n(t)f(x) 
&= \frac{\ga_n}{2} k'_{\frac{n}{2}} (0) t  \int_{B_t^n(x)}\frac{1}{\sqrt{t^2-r^2}} f(y) dy ,\\
\widetilde{\vect{W}}_n(t)f (x)
&=\( \widehat{\vect{W}}_n(t) - \frac{1}{2} \vect{W}_n(t) + \frac{\pd}{\pd t} \vect{W}_n(t) \) f(x) .
\end{align*}
Then, we have
\[
u(x,t)
=\tilde{J}_n(t)f(x) + e^{-t/2} \widetilde{\vect{W}}_n(t)f (x) .
\]
\end{enumerate}
\end{prop}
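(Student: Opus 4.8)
The plan is to reduce the whole statement to a single $t$-differentiation. By \eqref{sol_f-f} the solution of \eqref{DWfg} with $f+g=0$ is $u(x,t)=\pd_t S_n(t)f(x)$, so the claim is precisely that differentiating in $t$ the explicit representation of $S_n(t)f$ recorded in [SY, Proposition 2.1] produces the three displayed formulas; equivalently, it is [SY, Proposition 2.8] with $g$ renamed $f$. I would organise the computation by the parity of $n$, since the representation of $S_n(t)$ has a genuinely different shape when $n=1$, when $n$ is odd and $>1$, and when $n$ is even.

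In each case $S_n(t)f(x)$ is a sum of $e^{-t/2}$ times a diffusive Bessel-kernel integral over $B_t^n(x)$ and $e^{-t/2}$ times a wave part, the latter built from spherical means $\int_{S_t^{n-1}(x)}f\,d\sigma_{n-1}$ (odd $n$) or weighted solid integrals $\int_{B_t^n(x)}(t^2-r^2)^{-1/2}f\,dy$ (even $n$) acted on by the operators $(\frac1t\pd_t)^j$. Applying $\pd_t$ touches three things: (i) the prefactor $e^{-t/2}$, which produces the $-\tfrac12$ contribution visible in the definition $\widetilde{\vect{W}}_n=\widehat{\vect{W}}_n-\tfrac12\vect{W}_n+\pd_t\vect{W}_n$; (ii) the Bessel kernel inside the diffusive integral, where the recursions of Remark \ref{recursion} (equivalently the identity $\frac{d}{ds}\bigl(s^{-\ell}I_\ell(s)\bigr)=s^{-\ell}I_{\ell+1}(s)$) show that the $t$-derivative of the kernel is again of the same type but carrying the combination $t\,k_{\ell+1}-2\,k_\ell$ that defines $\tilde{k}_\ell(r,t)$; and (iii) the $t$-dependent domain of integration, which yields a boundary integral over $S_t^{n-1}(x)$ (by the coarea formula for solid integrals, or by the formula for the $t$-derivative of a spherical mean). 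The boundary term from (iii) together with the lowest-order contribution of (ii) produce the leading term $\widehat{\vect{W}}_n(t)f(x)$; the summand $\pd_t\vect{W}_n$ in $\widetilde{\vect{W}}_n$ comes straight from differentiating the wave part; and after collecting (i)--(iii) the total wave contribution is exactly $e^{-t/2}\widetilde{\vect{W}}_n(t)f(x)$, while the purely diffusive leftover is $\tilde{J}_n(t)f(x)$.

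Two points need care. First, the weight $(t^2-r^2)^{-1/2}$ (and, for $n=1$, the endpoint behaviour of $\tilde{k}_0(r,t)$) is singular at $r=t$; but since $f$ is smooth and compactly supported and every integral here is over the region $r<t$, the integrals and their $t$-derivatives converge absolutely and differentiation under the integral sign is legitimate --- a point already embedded in [SY, Proposition 2.1]. Second, the genuinely laborious step is the even-dimensional case, where the iterated operators $(\frac1t\pd_t)^{(n-2)/2-j}$ interact with the boundary terms of (iii), so that tracking which pieces land in $\vect{W}_n$, in $\widehat{\vect{W}}_n$, and in the diffusive kernel $\tilde{k}_{n/2}$, and matching the normalisation constants $\ga_n=c_n/2^{n-2}$, is where almost all of the bookkeeping lies; by contrast, for $n=1$ and for odd $n>1$ the computation is short. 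In every case the resulting identity is simply [SY, Proposition 2.8] read with $f$ in place of $g$, so no analytic input beyond [SY] is required --- the only real work is the differentiation and the reorganisation of terms, with the even-dimensional bookkeeping the main obstacle.
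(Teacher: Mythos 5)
Your proposal is correct and follows essentially the same route as the paper: the paper's entire proof is the observation \eqref{sol_f-f} that $u(x,t)=\frac{\pd}{\pd t}S_n(t)f(x)$ together with the citation of [SY, Proposition 2.8], which is exactly your reduction. Your additional sketch of how the $t$-differentiation of [SY, Proposition 2.1] distributes among $\tilde{J}_n$, $\widehat{\vect{W}}_n$ and $\widetilde{\vect{W}}_n$ (prefactor, kernel recursion, moving domain) is consistent with the definitions in Notation \ref{notation_pre} and Remark \ref{recursion}, and merely spells out what the paper leaves to the reference.
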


The explicit form of derivatives of $u$ directly follows from [SY, Lemma 3.5].

\begin{lem}
\label{derivative}
Let $f$ be a smooth function, and $u$ the unique classical solution of \eqref{DWfg} with $f+g=0$.
\begin{enumerate}[$(1)$]
\item Let $n=1$. Then, we have the following identities:
\begin{align*}
\frac{\pd u}{\pd x} (x,t) 
&= -\frac{e^{-t/2}}{32} \int_{x-t}^{x+t} \tilde{k}_1 (r,t) f(y) (x-y) dy \\
&\quad +\frac{e^{-t/2}}{16} (t-4)  \( f(x+t) -f(x-t) \) \\
&\quad + e^{-t/2}  \frac{\pd}{\pd x} \widehat{\vect{W}}_1(t)f(x) ,\\
\frac{\pd^2 u}{\pd x^2} (x,t) 
&= \frac{e^{-t/2}}{128} \int_{x-t}^{x+t} \(  \tilde{k}_2 (r,t) r^2 - 4 \tilde{k}_1 (r,t) \) f(y) dy \\
&\quad + \frac{e^{-t/2}}{256}t(t-8) \( f(x+t) +f(x-t) \) \\
&\quad +\frac{e^{-t/2}}{16} (t-4)  \( f' (x+t) -f' (x-t) \) \\
&\quad +e^{-t/2}  \frac{\pd^2}{\pd x^2} \widehat{\vect{W}}_1(t)f(x) .
\end{align*}
\item Let $n$ be an odd number greater than one. We have the following identities:
\begin{align*}
&\nabla u (x,t) \\
&= -\frac{\ga_n}{16} e^{-t/2} \int_{B^n_t(x)} \tilde{k}_{\frac{n+1}{2}} (r,t) f(y)  (x-y) dy \\
&\quad + \frac{\ga_n}{4} e^{-t/2} \( tk_{\frac{n+1}{2}} (0) - 2 k_{\frac{n-1}{2}} (0) \) t^{n-1} \int_{S^{n-1}} f(x+t\theta )  \theta  d \sigma_{n-1} (\theta ) \\
&\quad + e^{-t/2} \nabla \widetilde{\vect{W}}_n(t)f(x) ,\\
&\( \ome \cdot \nabla \)^2 u (x,t) \\
&= \frac{\ga_n}{64} e^{-t/2} \int_{B^n_t(x)} \( \tilde{k}_{\frac{n+3}{2}} (r,t) \( \ome \cdot (x-y) \)^2 -4 \tilde{k}_{\frac{n+1}{2}} (r,t)  \)  f(y) dy \\
&\quad +\frac{\ga_n}{4} e^{-t/2} \( tk_{\frac{n+1}{2}} (0) -  2 k_{\frac{n-1}{2}} (0) \) t^{n-1} \int_{S^{n-1}} \( \ome \cdot \nabla f(x+ t \theta ) \) \( \ome \cdot \theta \) d \sigma_{n-1} (\theta ) \\
&\quad + \frac{\ga_n}{16} e^{-t/2} \( tk_{\frac{n+3}{2}} (0) - 2 k_{\frac{n+1}{2}} (0) \) t^n \int_{S^{n-1}} f(x+t \theta ) \( \ome \cdot \theta \)^2 d \sigma_{n-1} (\theta ) \\
&\quad + e^{-t/2} \( \ome \cdot \nabla \)^2 \widetilde{\vect{W}}_n(t)f(x) ,
\end{align*}
where $\ome \in S^{n-1}$.
\item Let $n$ be an even number. We have the following identities:\footnote{[SY, Lemma 3.5] includes a typo: $k'_{\frac{n}{2}+3} (0)$ in the second term of $( \ome  \cdot \nabla)^2 \tilde{J}_n (t)f(x)$ should be replaced by $k'_{\frac{n}{2}+2} (0)$.} 
\begin{align*}
&\nabla  u (x,t) \\
&= -\frac{\ga_n}{16} e^{-t/2} \int_{B^n_t(x)} \tilde{k}_{\frac{n+2}{2}} (r,t) f(y)  (x-y) dy ,\\
&\quad +\frac{\ga_n}{8} e^{-t/2}   \( t k'_{\frac{n+2}{2}} (0) - 2k'_{\frac{n}{2}} (0) \) t^n \int_{B^n} \frac{1}{\sqrt{1 -\lvert z \rvert^2}} f(x+tz ) z dz \\
&\quad + e^{-t/2}   \nabla  \widetilde{\vect{W}}_n(t)f(x) ,\\
&\( \ome \cdot \nabla \)^2 u (x,t) \\
&= \frac{\ga_n}{64} e^{-t/2} \int_{B^n_t(x)} \(  \tilde{k}_{\frac{n+4}{2}} (r,t) \( \ome \cdot (x-y) \)^2 -  4 \tilde{k}_{\frac{n+2}{2}} (r,t) \)   f(y)  dy ,\\
&\quad + \frac{\ga_n}{8} e^{-t/2}  \( t k'_{\frac{n+2}{2}} (0) - 2k'_{\frac{n}{2}} (0) \)  t^n \int_{B^n} \frac{1}{\sqrt{1 -\lvert z \rvert^2}} \( \ome \cdot \nabla f(x+tz) \) \( \ome \cdot z \) dz \\
&\quad + \frac{\ga_n}{32} e^{-t/2}  \( t k'_{\frac{n+4}{2}} (0) - 2k'_{\frac{n+2}{2}} (0) \) t^{n+1}  \int_{B^n} \frac{1}{\sqrt{1 -\lvert z \rvert^2}} f(x+tz) \( \ome \cdot z \)^2 dz\\
&\quad + e^{-t/2} \( \ome \cdot \nabla \)^2 \widetilde{\vect{W}}_n(t)f(x) ,
\end{align*}
where $\ome \in S^{n-1}$.
\end{enumerate}
\end{lem}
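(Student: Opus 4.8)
The plan is the following. Lemma \ref{derivative} is, up to the misprint corrected in the footnote, a restatement of [SY, Lemma 3.5], so formally one only needs to cite that result; but let me describe the computation that produces the formulas, starting from Proposition \ref{solution}. Write $u=\tilde{J}_n(t)f+e^{-t/2}\widetilde{\vect{W}}_n(t)f$ when $n>1$ and $u=\tilde{J}_1(t)f+e^{-t/2}\widehat{\vect{W}}_1(t)f$ when $n=1$. Since $e^{-t/2}$ is independent of $x$, every spatial derivative of the ``wave'' summand is just $e^{-t/2}$ times the corresponding derivative of $\widetilde{\vect{W}}_n(t)f$ (resp.\ $\widehat{\vect{W}}_1(t)f$), which is the last line of each identity and needs nothing further. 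So the whole task is to differentiate the diffusive part $\tilde{J}_n(t)f(x)=\tfrac{\ga_n e^{-t/2}}{4}\int_{B_t^n(x)}\tilde{k}_{(n-1)/2}(r,t)f(y)\,dy$ (with $\tilde{k}_{n/2}$ when $n$ is even, and the interval integral of $\tilde{k}_0$ when $n=1$) with respect to $x$.

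I would compute $\nabla_x\tilde{J}_n(t)f(x)$ by substituting $y=x+z$ to make the integration domain $B_t^n(0)$ independent of $x$, differentiating under the integral sign, substituting back, and then integrating by parts in $y$ to move the gradient off $f$. The engine of the computation is the identity, with $s=\tfrac12\sqrt{t^2-r^2}$ and $\nabla_y s=(x-y)/(4s)$,
\[
\nabla_y\tilde{k}_\ell(r,t)=\frac{1}{4}\,\tilde{k}_{\ell+1}(r,t)(x-y)+\frac{t\,k_{\ell+1}'(0)-2\,k_\ell'(0)}{2\sqrt{t^2-r^2}}\,(x-y),
\]
which follows from the recursions of Remark \ref{recursion}: for odd $n$ and for $n=1$ the second summand vanishes (there $k_m'(0)=0$), while for even $n$ it is exactly the extra constant $-k_{m-1}'(0)$ in the recursion $k_m(s)=(k_{m-1}'(s)-k_{m-1}'(0))/s$ that leaves this $(t^2-r^2)^{-1/2}$-weighted piece behind. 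The first summand yields the interior term $-\tfrac{\ga_n e^{-t/2}}{16}\int_{B_t^n(x)}\tilde{k}_{\ell+1}(r,t)(x-y)f(y)\,dy$, and the integration by parts leaves a surface term on $S_t^{n-1}(x)$, where $r=t$, $s=0$, so $\tilde{k}_\ell(t,t)=t\,k_{\ell+1}(0)-2\,k_\ell(0)$: this is nonzero for odd $n$ and $n=1$ and gives the genuine sphere term, whereas for even $n$ it \emph{vanishes} and its contribution has already been absorbed into the solid $(t^2-r^2)^{-1/2}$-weighted integral. For $(\ome\cdot\nabla)^2u$ one applies $\ome\cdot\nabla_x$ to $\ome\cdot\nabla u$ once more: differentiating the interior integrand gives the $\tilde{k}_{\ell+2}(r,t)(\ome\cdot(x-y))^2-4\tilde{k}_{\ell+1}(r,t)$ combination (the ``$-4$'' from $\ome\cdot\nabla_y(\ome\cdot(x-y))=-1$), differentiating the sphere term gives the $\ome\cdot\nabla f$ sphere term, and a new surface term --- from the value $\tilde{k}_{\ell+1}(t,t)=t\,k_{\ell+2}(0)-2\,k_{\ell+1}(0)$ of the once-shifted kernel on $r=t$, together with the normal factor --- gives the $\int_{S^{n-1}}f(x+t\theta)(\ome\cdot\theta)^2\,d\sigma_{n-1}(\theta)$ term with that coefficient (with primed constants and a $(t^2-r^2)^{-1/2}$-weighted solid integral in place of the sphere integral when $n$ is even). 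For $n=1$ the same computation is carried out by hand, with $S_t^0(x)=\{x-t,x+t\}$ and ``surface terms'' being point evaluations; e.g.\ $\tilde{k}_0(t,t)=\tfrac t2-2$ produces the $\tfrac{e^{-t/2}}{16}(t-4)\bigl(f(x+t)-f(x-t)\bigr)$ term.

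The routine parts --- the change of variables, the differentiation under the integral, and tracking the constants ($\ga_n=c_n/2^{n-1}$ for odd $n$, $c_n/2^{n-2}$ for even $n$) --- I would not write out in detail. The one point that really needs care, and which I expect to be the main obstacle, is carrying the three kernel conventions (odd $n>1$, even $n$, $n=1$) in parallel while keeping straight which boundary contribution survives: since $k_\ell(0)=1/(2^\ell\ell!)$ for odd $n$ but $k_\ell(0)=0$, $k_\ell'(0)=1/(2^\ell\ell!)$ for even $n$, the sphere integral on $S_t^{n-1}(x)$ is present in one case and replaced by a $(t^2-r^2)^{-1/2}$-weighted ball integral in the other, and one must also correctly recover the second coefficient $t\,k_{\ell+2}(0)-2\,k_{\ell+1}(0)$ (resp.\ its primed analogue $t\,k_{\ell+2}'(0)-2\,k_{\ell+1}'(0)$) appearing in the second-derivative formula. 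Running this cross-check against [SY, Lemma 3.5] is exactly what turns up the misprint recorded in the footnote, namely that $k'_{n/2+3}(0)$ there should read $k'_{n/2+2}(0)$.
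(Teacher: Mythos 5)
Your proposal is correct and matches the paper, whose entire proof of this lemma is the one-line citation "directly follows from [SY, Lemma 3.5]"; you make the same observation and then additionally sketch the underlying computation (differentiation under the integral after translating the domain, the kernel identity $\nabla_y\tilde{k}_\ell(r,t)=\tfrac14\tilde{k}_{\ell+1}(r,t)(x-y)+\tfrac{t k_{\ell+1}'(0)-2k_\ell'(0)}{2\sqrt{t^2-r^2}}(x-y)$ from Remark \ref{recursion}, and the integration by parts whose boundary term $\tilde{k}_\ell(t,t)=t k_{\ell+1}(0)-2k_\ell(0)$ survives for odd $n$ and vanishes for even $n$), all of which checks out against the stated formulas, including the $n=1$ constants $\tilde{k}_0(t,t)=t/2-2$ and $\tilde{k}_1(t,t)=t/8-1$.
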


\subsection{Expansion of the principal terms}

Let $f$ be a smooth function, and $u$ the unique classical solution of \eqref{DWfg} with $f+g=0$. We denote by $\mathcal{P} ( u(x,t) )$, $\mathcal{P} ( \ome \cdot \nabla u(x,t) )$ and $\mathcal{P} ( ( \ome \cdot \nabla )^2 u(x,t) )$ the first terms of $u(x,t)$, $\ome \cdot \nabla u(x,t)$ and  $( \ome \cdot \nabla )^2 u(x,t)$, respectively, in Proposition \ref{solution} and Lemma \ref{derivative}. For example, they are given by
\begin{align}
&\mathcal{P} \( u(x,t) \) 
= \tilde{J}_n (t) f(x) 
=\frac{\ga_n}{4} e^{-t/2} \int_{B_t^n(x)} \tilde{k}_{\frac{n}{2}} (r,t) f(y)dy ,\\
&\mathcal{P} \( \ome \cdot \nabla u(x,t) \) = 
\ds -\frac{\ga_n}{16} e^{-t/2} \int_{B^n_t(x)} \tilde{k}_{\frac{n+2}{2}} (r,t) f(y) \ome \cdot (x-y) dy ,\\
&\mathcal{P} \( \( \ome \cdot \nabla \)^2 u(x,t) \) 
=\ds \frac{\ga_n}{64} e^{-t/2} \int_{B^n_t(x)} \(  \tilde{k}_{\frac{n+4}{2}} (r,t) \( \ome \cdot (x-y) \)^2 -  4 \tilde{k}_{\frac{n+2}{2}} (r,t) \)   f(y)  dy 
\end{align}
when $n$ is an even number. We call $\mathcal{P} ( u(x,t) )$, $\mathcal{P} ( \ome \cdot \nabla u(x,t) )$ and $\mathcal{P} ( ( \ome \cdot \nabla )^2 u(x,t) )$ the {\it principal terms} of $u(x,t)$, $\ome \cdot \nabla u(x,t)$ and  $( \ome \cdot \nabla )^2 u(x,t)$, respectively, in what follows. They play important roles in the study of the large time behavior of $u(\cdot ,t)$. In this subsection, we give asymptotic expansions of the kernels of the principal terms.

\begin{rem}
\label{expansion_k}
{\rm 
\begin{enumerate}[$(1)$]
\item Let $n$ be an odd number greater than one. We have
\[
k_\ell (s) 
=\frac{1}{s^\ell} \frac{e^s}{\sqrt{2\pi s}} 
\(1-\frac{(\ell -1/2)(\ell +1/2)}{2s} +\frac{(\ell -3/2) (\ell -1/2) (\ell +1/2) (\ell +3/2)}{8s^2} +O\( \frac{1}{s^3} \) \) 
\]
as $s$ goes to infinity.
\item Let $n$ be an even number. We have
\[
k_\ell (s) =\frac{e^s}{2s^\ell}\(1-\frac{\ell (\ell -1)}{2s}+ \frac{(\ell -2 ) (\ell -1 ) \ell (\ell +1 )}{8s^2} + O\( \frac{1}{s^3} \)\) 
\]
as $s$ goes to infinity.
\end{enumerate}
}
\end{rem}

\begin{proof}
(1) This is a direct consequence of the fact \eqref{Bessel_3}.

(2) Using the recursion in Remark \ref{recursion}, we can show the expansion by induction.
\end{proof}

Remark \ref{expansion_k} directly implies the asymptotic expansions of the kernels of the principal terms with tedious computation.

\begin{lem}
\label{expansion_sqrt}
{\rm 
Let $\f (t)$ be of order of $\sqrt{t}$ as $t$ goes to infinity. Then, we have
\begin{enumerate}[$(1)$]
\item Let $n$ be an odd number. We have
\begin{align*}
\tilde{k}_\ell \( \f (t),t \) 
&= \frac{2^{\ell +1}}{\sqrt{\pi} t^{(2 \ell +1)/2}}  \exp \( \frac{\sqrt{t^2 -\f (t)^2}}{2} \) 
\( -\frac{2 \ell +1}{t} + \frac{1}{2} \( \frac{\f (t)}{t} \)^2 + \frac{\ell +2}{4} \( \frac{\f (t)}{t} \)^4  \right. \\
&\quad  \left. - \frac{3(2\ell +1)(2 \ell +3)\f (t)^2}{8t^3} + \frac{(2\ell -1)(2 \ell +1)(2 \ell +3)}{4t^2} + O \( \frac{1}{t^3} \)  \) 
\end{align*}
as $t$ goes to infinity.
\item Let $n$ be an even number. We have
\begin{align*}
\tilde{k}_\ell \( \f (t) ,t \)  
&= \frac{2^\ell}{t^\ell}  \exp \( \frac{\sqrt{t^2 -\f (t)^2}}{2} \) 
\( -\frac{2\ell}{t} + \frac{1}{2} \( \frac{\f (t)}{t} \)^2 + \frac{2\ell +3}{8} \( \frac{\f (t)}{t} \)^4 \right. \\
&\quad \left. - \frac{3\ell (\ell +1)\f (t)^2}{2t^3} + \frac{2\ell (\ell -1)(\ell +1)}{t^2} + O \( \frac{1}{t^3} \)  \) 
\end{align*}
as $t$ goes to infinity.
\end{enumerate}
}
\end{lem}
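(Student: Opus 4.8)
The plan is to substitute $r=\f(t)$ into the definition of $\tilde{k}_\ell(r,t)$ in Notation \ref{notation_pre} and feed in the asymptotic expansions of $k_\ell$ from Remark \ref{expansion_k}. Since $\f(t)$ is of order $\sqrt{t}$, we have $\f(t)^2/t^2=O(1/t)\to 0$, so the argument
\[
s=s(t):=\frac{\sqrt{t^2-\f(t)^2}}{2}=\frac{t}{2}\sqrt{1-\(\frac{\f(t)}{t}\)^2}
\]
tends to infinity and Remark \ref{expansion_k} applies. I would treat the whole computation as a joint expansion in the two ``first-order small'' quantities $1/t$ and $\f(t)^2/t^2$, retaining every term down to order $1/t^2$ (equivalently, $\f(t)^4/t^4$ and $\f(t)^2/t^3$) and absorbing the rest into $O(1/t^3)$, this being consistent since $1/s\sim 2/t$ so the $O(1/s^3)$ error in Remark \ref{expansion_k} is $O(1/t^3)$. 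The factor $\exp(\sqrt{t^2-\f(t)^2}/2)$ is kept intact as the common exponential prefactor and is not itself expanded.

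First I would expand, by the binomial series, the quantities $\sqrt{1-(\f/t)^2}$, $(1-(\f/t)^2)^{-1/2}$, $(1-(\f/t)^2)^{-1}$ and $(1-(\f/t)^2)^{-\ell/2}$ to the order just described, so that $1/s$, $1/s^2$ and the powers $s^{-\ell}$, $s^{-(\ell+1)}$ become power series in $1/t$ and $\f(t)^2/t^2$. Substituting these into the expansions of $k_{\ell+1}(s)$ and $k_\ell(s)$ from Remark \ref{expansion_k} --- part (1), the classical asymptotic expansion of $I_\ell(s)/s^\ell$ with half-integer coefficients, in the odd case; part (2) in the even case --- writes $k_{\ell+1}(s)$ and $k_\ell(s)$ as $e^{s}$ times a negative power of $t$ times a power series in $1/t$ and $\f(t)^2/t^2$.

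Next I would form $\tilde{k}_\ell(\f(t),t)=t\,k_{\ell+1}(s)-2k_\ell(s)$. The crucial --- and only delicate --- point is that the two leading terms cancel: $t\,k_{\ell+1}(s)$ carries the factor $\frac{t}{2s}\cdot\frac{e^{s}}{s^{\ell}}$-type and, since $\frac{t}{2s}=(1-(\f/t)^2)^{-1/2}\to 1$, this matches the leading behaviour $\sim\frac{e^{s}}{s^{\ell}}$ of $2k_\ell(s)$. Hence the expansions must be carried far enough that the first surviving terms emerge after the subtraction, and one must track every cross term of order $1/t$ and $1/t^2$ coming from the product of the $t/(2s)$ factor, the $s^{-\ell}$ binomial correction, and the $1/s$, $1/s^2$ corrections in Remark \ref{expansion_k}. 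Concretely, the leading survivor $-2\ell/t$ comes from the difference $(\ell+1)\ell-\ell(\ell-1)=2\ell$ of the $1/s$-coefficients, the term $\tfrac12(\f/t)^2$ from the binomial expansion of $t/(2s)$, and the order-$1/t^2$ terms from finitely many further such products. Collecting the survivors yields the two displayed formulas; the case $n=1$ is subsumed by the odd-$n$ computation since there too $\tilde{k}_\ell$ is built from $I_\ell(s)/s^\ell=k_\ell(s)$ and Remark \ref{expansion_k}(1) is exactly the Bessel asymptotics. I expect the main obstacle to be purely this bookkeeping forced by the leading-order cancellation: there is no conceptual difficulty beyond organizing the double expansion so that nothing of order $1/t^2$ is dropped.
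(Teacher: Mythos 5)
Your plan is exactly the computation the paper has in mind: the paper gives no written proof of Lemma \ref{expansion_sqrt}, stating only that Remark \ref{expansion_k} ``directly implies'' it ``with tedious computation,'' and your proposal is that computation, correctly organized --- substitute $s=\sqrt{t^2-\f(t)^2}/2$, expand $t\,k_{\ell+1}(s)-2k_\ell(s)$ jointly in $1/t$ and $\f(t)^2/t^2$ to order $1/t^2$, and exploit the cancellation of the leading terms via $t/(2s)=(1-(\f/t)^2)^{-1/2}$. Your identification of the leading survivors ($-2\ell/t$ from the difference of the $1/s$-coefficients and $\tfrac12(\f/t)^2$ from the binomial factor) checks out against the stated formulas, so the proposal is correct and takes the same route as the paper.
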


\begin{lem}
\label{expansion_smallo}
Let $\psi (t)$ be of small order of $t$ as $t$ goes to infinity. Then, we have
\begin{enumerate}[$(1)$]
\item Let $n$ be an odd number. We have
\begin{align*}
\tilde{k}_\ell \( \psi (t),t \) 
&= \frac{2^{\ell +1}}{\sqrt{\pi} t^{(2 \ell +1)/2}} \exp \( \frac{\sqrt{t^2 -\psi (t)^2}}{2} \)  \( -\frac{2 \ell +1}{t} + \frac{1}{2} \( \frac{\psi (t)}{t} \)^2 \right. \\
&\quad \left. + O \( \( \frac{\psi (t)}{t} \)^4 \) + O \( \frac{\psi (t)^2}{t^3} \) +  O \( \frac{1}{t^2} \)  \) 
\end{align*}
as $t$ goes to infinity.
\item Let $n$ be an even number. We have
\begin{align*}
\tilde{k}_\ell \( \psi (t) ,t \)  
&= \frac{2^\ell}{t^\ell}  \exp \( \frac{\sqrt{t^2 -\psi (t)^2}}{2} \)  \( -\frac{2\ell}{t} + \frac{1}{2} \( \frac{\psi (t)}{t} \)^2 \right. \\
&\quad \left. + O \( \( \frac{\psi (t)}{t} \)^4 \) + O \( \frac{\psi (t)^2}{t^3} \) + O \( \frac{1}{t^2} \)  \) 
\end{align*}
as $t$ goes to infinity.
\end{enumerate}
\end{lem}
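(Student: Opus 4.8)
The plan is to derive this from Lemma \ref{expansion_sqrt} by tracking which error terms survive and which get absorbed once $\f(t)$ is replaced by a function $\psi(t) = o(t)$ rather than one of exact order $\sqrt t$. The point of Lemma \ref{expansion_sqrt} was that $\f(t)^2/t$ is $O(1)$, so $(\f(t)/t)^2 = O(1/t)$, $(\f(t)/t)^4 = O(1/t^2)$, and $\f(t)^2/t^3 = O(1/t^2)$: in that regime \emph{all} of these sit at order $1/t^2$ and one can meaningfully collect the full $1/t$-, $1/t^2$- level expansion. When $\psi(t)$ is merely $o(t)$, the quantity $(\psi(t)/t)^2$ is $o(1)$ but need not be $O(1/t)$, so it is a genuinely separate scale that must be displayed explicitly; similarly $(\psi(t)/t)^4$ and $\psi(t)^2/t^3$ must be carried as their own $O(\cdot)$ terms, and the purely dimensional constants of size $1/t^2$ collapse into a single $O(1/t^2)$. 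So the statement is exactly Lemma \ref{expansion_sqrt} with the three $\psi$-dependent higher-order terms and the $1/t^2$ remainder left uncombined.

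Concretely, I would go back to the definition $\tilde k_\ell(r,t) = t\,k_{\ell+1}\!\big(\tfrac{\sqrt{t^2-r^2}}2\big) - 2\,k_\ell\!\big(\tfrac{\sqrt{t^2-r^2}}2\big)$ and substitute $r = \psi(t)$, writing $s = \tfrac12\sqrt{t^2-\psi(t)^2} = \tfrac t2\sqrt{1-(\psi(t)/t)^2}$. Since $\psi(t)/t \to 0$, $s \to \infty$, so the asymptotic expansion of $k_\ell(s)$ from Remark \ref{expansion_k} applies: in the odd case $k_\ell(s) = s^{-\ell}\frac{e^s}{\sqrt{2\pi s}}\big(1 - \frac{(\ell-1/2)(\ell+1/2)}{2s} + O(1/s^2)\big)$, and in the even case $k_\ell(s) = \frac{e^s}{2s^\ell}\big(1 - \frac{\ell(\ell-1)}{2s} + O(1/s^2)\big)$. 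Then I would expand $s$ in powers of $(\psi(t)/t)^2$: $s = \tfrac t2\big(1 - \tfrac12(\psi(t)/t)^2 - \tfrac18(\psi(t)/t)^4 + O((\psi(t)/t)^6)\big)$, hence $1/s = (2/t)\big(1 + \tfrac12(\psi(t)/t)^2 + O((\psi(t)/t)^4)\big)$ and $s^{-\ell}$, $s^{-1/2}$ expand the same way. The exponential factor $e^s = \exp\!\big(\tfrac12\sqrt{t^2-\psi(t)^2}\big)$ is pulled out unexpanded exactly as in the statement. Multiplying the two expansions $t\,k_{\ell+1}(s)$ and $-2\,k_\ell(s)$, the leading terms produce the prefactor $\tfrac{2^{\ell+1}}{\sqrt\pi\,t^{(2\ell+1)/2}}$ (odd) or $\tfrac{2^\ell}{t^\ell}$ (even) and the bracket $-\tfrac{2\ell+1}{t}+\tfrac12(\psi(t)/t)^2+\dots$; the cross-terms between the $1/s$-corrections and the $(\psi/t)^2$-corrections, together with the $O(1/s^2)$ tail, generate exactly the three error terms $O((\psi(t)/t)^4)$, $O(\psi(t)^2/t^3)$, $O(1/t^2)$.

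The one genuine subtlety — not an obstacle so much as the thing to be careful about — is bookkeeping of the mixed terms: a product such as $\tfrac{2\ell+1}{t}\cdot(\psi(t)/t)^2$ must be recognized as $O(\psi(t)^2/t^3)$ and thrown into that bucket, while $(\psi(t)/t)^2\cdot(\psi(t)/t)^2$ goes into $O((\psi(t)/t)^4)$, and one must check that nothing of size, say, $\psi(t)^4/t^5$ or $\psi(t)^6/t^7$ is secretly larger than the advertised errors — it is not, since $\psi(t)=o(t)$ makes each extra factor $(\psi(t)/t)^2$ strictly improve the bound. Since this is routine, I would simply remark that the computation is the same as for Lemma \ref{expansion_sqrt} — indeed the excerpt already says ``Remark \ref{expansion_k} directly implies the asymptotic expansions of the kernels of the principal terms with tedious computation'' — and only note the one difference: in the $o(t)$ regime the terms $(\psi(t)/t)^4$ and $\psi(t)^2/t^3$ are no longer automatically $O(1/t^2)$, so they are kept separate. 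The proof is therefore short: invoke Lemma \ref{expansion_sqrt}'s derivation verbatim through the substitution $r = \psi(t)$, then observe that the collection of errors into a single $O(1/t^2)$ is no longer valid and list the surviving scales, which are precisely those in the statement.
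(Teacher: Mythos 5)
Your proposal is correct and follows essentially the same route the paper takes: the paper gives no separate proof of this lemma, stating only that Remark \ref{expansion_k} "directly implies" it by tedious computation, which is exactly the substitution $s=\tfrac12\sqrt{t^2-\psi(t)^2}$, the expansion of powers of $1/s$ via \eqref{app_taylor1}, and the bookkeeping of cross-terms that you describe. Your explicit identification of the one point of difference from Lemma \ref{expansion_sqrt} --- that for $\psi(t)=o(t)$ the scales $(\psi(t)/t)^4$ and $\psi(t)^2/t^3$ are no longer absorbed into $O(1/t^2)$ and must be listed separately --- is precisely the content of the lemma, and your leading-order cancellation producing $-\tfrac{2\ell+1}{t}$ (resp. $-\tfrac{2\ell}{t}$) and the coefficient $\tfrac12(\psi(t)/t)^2$ checks out.
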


\begin{lem}
\label{expansion_d}
Let $d>0$. 
\begin{enumerate}[$(1)$]
\item If $0\leq r \leq d$, then we have
\[
e^{-t/2} \tilde{k}_\ell (r,t) = 
\begin{cases}
\ds -\frac{(2\ell +1)2^{\ell +1}}{\sqrt{\pi} t^{(2\ell +3)/2}} \( 1+ O \( \frac{1}{t} \) \) &( n \in 2\N -1 ) ,\\
\ds -\frac{\ell  2^{\ell +1}}{t^{\ell +1}} \( 1+ O \( \frac{1}{t} \) \)  & (n \in 2 \N ) 
\end{cases}
\]
as $t$ goes to infinity.
\item For each $\ell \geq 0$, there exists a constant $T \geq d$ such that if $t \geq T$, then both of the following properties hold:
\begin{itemize}
\item For any $0 \leq r \leq d$, both $e^{-t/2} \tilde{k}_\ell (r,t)$ and $e^{-t/2} \tilde{k}_{\ell +1} (r,t)$ are negative. 
\item The function $[0,d] \ni r \mapsto e^{-t/2} \tilde{k}_\ell (r,t) \in (0,+\infty )$ is strictly increasing.
\end{itemize}
\end{enumerate}
\end{lem}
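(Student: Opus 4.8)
Both parts follow by substituting the expansion of $k_\ell$ from Remark \ref{expansion_k} into the definition $\tilde{k}_\ell(r,t) = t k_{\ell+1}(s) - 2 k_\ell(s)$, where $s = s(r,t) := \sqrt{t^2-r^2}/2$, and then (for part (2)) differentiating in $r$ using the recursions of Remark \ref{recursion}. For $n=1$ one first notes that $(2/\sqrt{t^2-r^2})^\ell I_\ell(\sqrt{t^2-r^2}/2) = s^{-\ell}I_\ell(s) = k_\ell(s)$, so the $n=1$ kernel coincides with the odd-dimensional expression and is subsumed in that case throughout. The one point requiring care in part (1) is that the two summands $t k_{\ell+1}(s)$ and $-2 k_\ell(s)$ are each of size $\sim 2^\ell e^s/t^\ell$ and cancel at leading order, so the expansion of $k_\ell$ must be carried one order further than naively needed.

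I would begin by recording the elementary estimates, uniform for $r \in [0,d]$ as $t \to \infty$, that follow from $t - \sqrt{t^2-r^2} = r^2/(t + \sqrt{t^2-r^2}) = O(1/t)$: one has $s \to \infty$, $t/s = 2 + O(1/t^2)$, $s^{-a} = (2/t)^a(1 + O(1/t^2))$ for any fixed $a > 0$, and
\[
e^{\,s - t/2} = \exp\left( -\frac{r^2}{2\left( t + \sqrt{t^2-r^2} \right)} \right) = 1 + O(1/t),
\]
all with implied constants depending only on $d$ (and $a$). Feeding Remark \ref{expansion_k} into $t k_{\ell+1}(s) - 2 k_\ell(s)$ and collecting terms, the leading order cancels and the first surviving contribution is $-(2\ell+1)/s + O(1/t^2)$ for odd $n$ and $-2\ell/s + O(1/t^2)$ for even $n$; multiplying by $e^{-t/2}$ and inserting the estimates above yields the two formulas of part (1). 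This step is purely mechanical once the cancellation is tracked.

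For part (2), the first bullet is immediate from part (1): the leading coefficients $-(2\ell+1)2^{\ell+1}/\sqrt{\pi}$ (odd $n$) and $-\ell\,2^{\ell+1}$ (even $n$, using that only $\ell \geq 1$ arises there) are strictly negative, and the analogous fact holds for $\tilde{k}_{\ell+1}$ since $\ell+1 \geq 1$ in every case; hence both $e^{-t/2}\tilde{k}_\ell(r,t)$ and $e^{-t/2}\tilde{k}_{\ell+1}(r,t)$ are negative once $t$ is large enough (depending on $\ell$ and $d$) that the factors $1 + O(1/t)$ remain positive uniformly in $r \in [0,d]$. For the monotonicity I would differentiate $\tilde{k}_\ell$ in $r$ using $\partial_r s = -r/(4s)$. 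For odd $n$, $k_\ell'(s) = s k_{\ell+1}(s)$ (Remark \ref{recursion}(1)) gives $t k_{\ell+1}'(s) - 2 k_\ell'(s) = s\,\tilde{k}_{\ell+1}(r,t)$, hence
\[
\partial_r\left( e^{-t/2}\tilde{k}_\ell(r,t) \right) = -\frac{r}{4}\, e^{-t/2}\tilde{k}_{\ell+1}(r,t),
\]
which is strictly positive on $(0,d]$ by the first bullet, so $r \mapsto e^{-t/2}\tilde{k}_\ell(r,t)$ is strictly increasing on $[0,d]$. For even $n$, Remark \ref{recursion}(2) gives $k_\ell'(s) = s k_{\ell+1}(s) + k_\ell'(0)$ instead, producing an additional summand $-\frac{r}{4s}\, e^{-t/2}\left( t k_{\ell+1}'(0) - 2 k_\ell'(0) \right)$; since $t k_{\ell+1}'(0) - 2 k_\ell'(0) = O(t)$ and $1/s = O(1/t)$ this correction is $O(e^{-t/2})$, whereas the main term $-\frac{r}{4} e^{-t/2}\tilde{k}_{\ell+1}(r,t)$ is, by part (1), comparable to $r(\ell+1)2^\ell/t^{\ell+2}$; the factor $r$ cancels in the comparison, so for $t$ sufficiently large the sum is positive for every $r \in (0,d]$, giving strict monotonicity on $[0,d]$. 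I expect the main obstacle to be precisely this last comparison — an exponentially small correction against a polynomially small main term — for which one genuinely needs part (1) applied at index $\ell+1$; everything else is bookkeeping with Remark \ref{expansion_k} and Remark \ref{recursion}.
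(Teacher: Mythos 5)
Your proposal is correct and follows essentially the same route as the paper: part (1) is the specialization of the already-established expansion of $\tilde{k}_\ell(\f(t),t)$ (Lemma \ref{expansion_sqrt}) combined with $e^{(-t+\sqrt{t^2-r^2})/2}=1+O(1/t)$, and part (2) proceeds exactly as in the paper by differentiating via Remark \ref{recursion} to get $\pd_r\tilde{k}_\ell=-\frac{r}{4}\tilde{k}_{\ell+1}$ plus (in even dimensions) the exponentially negligible correction $-\frac{r}{2\sqrt{t^2-r^2}}\bigl(tk'_{\ell+1}(0)-2k'_\ell(0)\bigr)$, then applying part (1) at index $\ell+1$. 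Your explicit handling of the leading-order cancellation, of the $n=1$ case, and of the even-dimensional $\ell=0$ degeneracy is consistent with (indeed slightly more careful than) the paper's own argument.
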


\begin{proof}
(1) This is a direct consequence of Lemma \ref{expansion_sqrt} and the fact \eqref{app_taylor2}. 

(2) The first property immediately follows from the first assertion. We give a proof of the second property for the even dimensional case. The argument works for the odd dimensional case. 

By the recursion in Remark \ref{recursion}, we have
\[
\frac{\pd \tilde{k}_\ell}{\pd r} (r,t)
=-\frac{r}{4} \tilde{k}_{\ell +1} (r,t) -\frac{r}{2\sqrt{t^2-r^2}} \( t k'_{\frac{\ell+1}{2}} (0) - 2 k'_{\frac{\ell}{2}} (0) \) 
\]
Thus, the first assertion implies
\[
e^{-t/2} \frac{\pd \tilde{k}_\ell}{\pd r} (r,t) 
=\frac{r (\ell +1) 2^{\ell +2}}{4t^{\ell +2}} \( 1+ O \( \frac{1}{t} \) \) 
\]
as $t$ goes to infinity. 
\end{proof}

\subsection{Estimates of the error terms}

Let $f$ be a smooth function, and $u$ the unique classical solution of \eqref{DWfg} with $f+g=0$. Put 
\begin{align*}
\mathcal{E} \( u(x,t) \) &= u(x,t) - \mathcal{P} \( u(x,t) \) ,\\
\mathcal{E} \( \ome \cdot \nabla u(x,t) \) &= \ome \cdot \nabla u(x,t) - \mathcal{P} \( \ome \cdot \nabla u(x,t) \) ,\\
\mathcal{E} \( \( \ome \cdot \nabla \)^2 u(x,t) \) &= \( \ome \cdot \nabla \)^2 u(x,t) - \mathcal{P} \( \( \ome \cdot \nabla \)^2 u(x,t) \) .
\end{align*}
We call  $\mathcal{E} ( u(x,t) )$, $\mathcal{E} ( \ome \cdot \nabla u(x,t) )$ and $\mathcal{E} ( ( \ome \cdot \nabla )^2 u(x,t) )$ the {\it error terms} of $u(x,t)$, $\ome \cdot \nabla u(x,t)$ and  $( \ome \cdot \nabla )^2 u(x,t)$, respectively, in what follows. We should pay attention to them in the study of the large time behavior of $u(\cdot ,t)$. In this subsection, we give estimates of the error terms.

\begin{lem}
\label{error}
Let $f$ be a smooth bounded function, and $u$ the unique classical solution of \eqref{DWfg} with $f+g=0$.
\begin{enumerate}[$(1)$]
\item Let $n$ be an odd number. There exists a positive constant $C(n)$ such that, for any $\ome \in S^{n-1}$, $x \in \R^n$ and $t>0$, the following inequalities hold:
\begin{align*}
\lvert \mathcal{E} \( u(x,t) \) \rvert 
&\leq C(n) e^{-t/2} (1+t)^{n-1} \| f \|_{W^{(n-1)/2,\infty}} ,\\
\lvert \mathcal{E} \( \ome \cdot \nabla u(x,t) \) \rvert
&\leq C(n) e^{-t/2} (1+t)^n \| f \|_{W^{(n+1)/2,\infty}} ,\\
\lvert \mathcal{E} \( \( \ome \cdot \nabla \)^2 u(x,t) \) \rvert
&\leq C(n) e^{-t/2} (1+t)^{n+1} \| f \|_{W^{(n+3)/2,\infty}} .
\end{align*}
\item Let $n$ be an even number. There exists a positive constant $C(n)$ such that, for any $\ome \in S^{n-1}$, $x \in \R^n$ and $t>0$, the following inequalities hold:
\begin{align*}
\lvert \mathcal{E} \( u(x,t) \) \rvert 
&\leq C(n) e^{-t/2} (1+t)^n \| f \|_{W^{n/2,\infty}} ,\\
\lvert \mathcal{E} \( \ome \cdot \nabla u(x,t) \) \rvert
&\leq C(n) e^{-t/2} (1+t)^{n+1} \| f \|_{W^{(n+2)/2,\infty}} ,\\
\lvert \mathcal{E} \( \( \ome \cdot \nabla \)^2 u(x,t) \) \rvert
&\leq C(n) e^{-t/2} (1+t)^{n+2} \| f \|_{W^{(n+4)/2,\infty}} .
\end{align*}
\end{enumerate}
\end{lem}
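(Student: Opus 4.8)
The plan is to bound each error term by subtracting off the explicit principal term recorded in Proposition \ref{solution} and Lemma \ref{derivative} and estimating everything that remains. For the value $u(x,t)$ the error term $\mathcal{E}(u(x,t))$ is, after the subtraction, the sum $e^{-t/2}\widetilde{\vect{W}}_n(t)f(x)$ together with the difference between $\tilde{J}_n(t)f(x)$ and its first term --- but in fact, looking at the expressions, the principal term \emph{equals} $\tilde{J}_n(t)f(x)$, so $\mathcal{E}(u(x,t)) = e^{-t/2}\widetilde{\vect{W}}_n(t)f(x)$. Hence the whole task reduces to estimating $e^{-t/2}$ times the various spherical-mean / ball-integral operators $\widehat{\vect{W}}_n$, $\vect{W}_n$, $\partial_t\vect{W}_n$ (and for the derivatives, the analogous wave-type terms appearing in Lemma \ref{derivative}, plus the lower-order ball integrals over $B^n$ that are not the leading $\tilde{J}_n$ piece).

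First I would treat the spherical-mean pieces. Each summand of $\vect{W}_n(t)f(x)$ in the odd-dimensional case is $c_n (8^j j!)^{-1} (t^{-1}\partial_t)^{(n-3)/2-j}\bigl(t^{-1}\int_{S_t^{n-1}(x)} f\,d\sigma_{n-1}\bigr)$; writing the surface integral as $t^{n-1}\int_{S^{n-1}} f(x+t\theta)\,d\sigma_{n-1}(\theta)$, each application of $t^{-1}\partial_t$ either differentiates the polynomial weight $t^{n-2}$ (lowering the power) or lands a $t$-scaled gradient on $f$; in all cases one obtains a finite sum of terms of the form $t^{k}\int_{S^{n-1}} (\partial^\al f)(x+t\theta)\,\theta^\be\,d\sigma_{n-1}(\theta)$ with $k\le n-2$ and $|\al|\le (n-3)/2$. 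Bounding the integral by $\sigma_{n-1}(S^{n-1})\,\|f\|_{W^{(n-3)/2,\infty}}$ and absorbing all powers of $t$ into $C(n)(1+t)^{n-1}$ gives the claimed bound (the extra factor in the Sobolev index compared to the statement is harmless since $(n-3)/2 \le (n-1)/2$). The terms $\widehat{\vect{W}}_n$ and $\partial_t\vect{W}_n$ are handled identically, the latter contributing one more power of $t$ and one more derivative, still within $(1+t)^{n-1}\|f\|_{W^{(n-1)/2,\infty}}$. The even-dimensional case is the same, with $\int_{B^n_t(x)}(t^2-r^2)^{-1/2}f\,dy = t^{n-1}\int_{B^n}(1-|z|^2)^{-1/2}f(x+tz)\,dz$ and the integrable singularity $(1-|z|^2)^{-1/2}$ contributing only a finite constant $\int_{B^n}(1-|z|^2)^{-1/2}\,dz$.

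For $\mathcal{E}(\ome\cdot\nabla u)$ and $\mathcal{E}((\ome\cdot\nabla)^2 u)$ the structure is the same: from Lemma \ref{derivative}, after removing the single principal ball-integral term, what is left is $e^{-t/2}$ times (i) the wave-type terms $\nabla\widetilde{\vect{W}}_n(t)f$, $(\ome\cdot\nabla)^2\widetilde{\vect{W}}_n(t)f$, estimated exactly as above but with one or two extra derivatives and one or two extra powers of $t$, and (ii) the extra ball integrals with kernels $k_{\frac{n+1}{2}}(0)$, $k_{\frac{n+3}{2}}(0)$ etc.\ (constants) against $f$, $\nabla f$ times polynomial weights in $\theta$ and powers $t^{n-1}$, $t^n$, $t^{n+1}$ --- again bounded by a constant times the surface/ball measure times the appropriate Sobolev norm. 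Collecting the worst power of $t$ and the highest derivative order in each line produces precisely $(1+t)^n\|f\|_{W^{(n+1)/2,\infty}}$ and $(1+t)^{n+1}\|f\|_{W^{(n+3)/2,\infty}}$ in the odd case, and the stated even-case analogues.

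The routine part is the bookkeeping of how many powers of $t$ and how many derivatives the operators $(t^{-1}\partial_t)^m$ generate; the only point requiring a little care --- the main (minor) obstacle --- is verifying that in every term the power of $t$ never exceeds the claimed exponent and the derivative order never exceeds the claimed Sobolev index, uniformly in $x$ and $\ome$; this is a finite check dimension by dimension. Since the paper already asserts these identities follow from [SY, Lemma 3.5], I would in practice quote that reference for the explicit forms and carry out only the uniform estimation sketched here. One should also note that all bounds are genuinely uniform in $x\in\R^n$ because $f$ and its derivatives are bounded, and uniform in $\ome\in S^{n-1}$ because $|\ome\cdot\theta|\le 1$ and $|\ome\cdot z|\le 1$ on the relevant domains; no compact-support hypothesis on $f$ is needed for this lemma, only boundedness of finitely many derivatives.
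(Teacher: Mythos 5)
Your proposal is correct and follows essentially the same route as the paper: identify $\mathcal{E}(u(x,t)) = e^{-t/2}\widetilde{\vect{W}}_n(t)f(x)$ from Proposition \ref{solution}, bound the extra ball/sphere integrals in Lemma \ref{derivative} by $C(n)e^{-t/2}(1+t)^{n+1}\|f\|_\infty$ (and analogously for the second derivative), and control the wave-type terms by counting powers of $t$ and derivative orders. The only difference is that the paper delegates the estimate of $\widetilde{\vect{W}}_n$ and its derivatives to [SY, Lemma 3.1], whereas you carry out that bookkeeping explicitly.
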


\begin{proof}
We give a proof for the even dimensional case. The argument works for the odd dimensional case.

From Proposition \ref{solution}, we have $\mathcal{E}  \( u(x,t) \) = e^{-t/2} \widetilde{\vect{W}}_n (t) f(x)$. Thus, [SY, Lemma 3.1] with $f+g=0$ and $\al =0$ implies the conclusion.

By Lemma \ref{derivative}, there exists a positive constant $C(n)$ such that, for any $\ome \in S^{n-1}$, $x \in \R^n$ and $t>0$, we have
\[
\lvert \mathcal{E} \( \ome \cdot \nabla u(x,t) \) \rvert
\leq C(n) e^{-t/2} (1+t)^{n+1} \| f \|_\infty + e^{-t/2}  \lvert \nabla \widetilde{\vect{W}}_n (t) f(x) \rvert .
\]
From [SY, Lemma 3.1] with $f+g=0$ and $\al =1$, we obtain the conclusion. 

The estimate of the error term of $( \ome \cdot \nabla)^2 u(x,t)$ is same as above.
\end{proof}

\section{Spatial zeros}

Let $f$ be as in Notation \ref{notation_main}, and $u$ the unique classical solution of \eqref{DWfg} with $f+g=0$. In this section, we discuss the large time behavior of the (non-trivial) spatial null set of $u$,
\begin{equation}
\mathcal{N} \( u(\cdot ,t ) \) = \left. \left\{ x \in \supp \( u \( \cdot ,t \) \)^\circ \rvert u(x,t) =0 \right\}  .
\end{equation}

\begin{prop}
\label{prop_negativity_null}
Let $f$ be as in Notation \ref{notation_main}.
There exists a constant $T \geq d_f^2 /(2n)$ such that if $t \geq T$, then, for any $x \in CS(f) + ( \sqrt{2nt} -d_f ) B^n$, we have $u(x,t) <0$.
\end{prop}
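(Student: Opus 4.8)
The plan is to use the Nishihara decomposition $u(x,t) = \mathcal{P}(u(x,t)) + \mathcal{E}(u(x,t))$ from Proposition \ref{solution} and the error estimate of Lemma \ref{error}, and to show that on the parallel body $CS(f) + (\sqrt{2nt}-d_f)B^n$ the principal term $\mathcal{P}(u(x,t)) = \tilde J_n(t)f(x)$ is negative and dominates the exponentially small error term $\mathcal{E}(u(x,t))$. The key point is that for $x$ in this parallel body and $y \in \supp f \subset CS(f)$, the distance $r = |x-y|$ is bounded above by $\sqrt{2nt}$ (since $\dist(x,CS(f)) \leq \sqrt{2nt}-d_f$ and $\diam(\supp f) = d_f$), and also bounded below by $\sqrt{2nt}-2d_f$ when $\dist(x,CS(f)) \geq \sqrt{2nt}-d_f$; more relevantly, as long as $x$ is at distance $\gtrsim \sqrt{t}$ from $CS(f)$ the argument $r$ of the kernel $\tilde k_{n/2}(r,t)$ is of order $\sqrt{t}$, so Lemma \ref{expansion_sqrt} applies pointwise in $y$.

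First I would fix the regime. Write $\rho = \dist(x, CS(f))$, so $0 \le \rho \le \sqrt{2nt}-d_f$. Split into two cases. \emph{Case A: $0 \le r \le d_f$} — actually this only occurs when $x$ is near $CS(f)$; here one cannot use the $\sqrt{t}$-expansion, but Lemma \ref{expansion_d}(1) gives $e^{-t/2}\tilde k_{n/2}(r,t) = -(n/2)2^{(n+2)/2} t^{-(n+2)/2}(1+O(1/t)) < 0$ uniformly, so the contribution of $\{y : |x-y|\le d_f\}$ to $\tilde J_n(t)f(x)$ is negative. \emph{Case B: $r \ge d_f$}, equivalently $\rho$ large enough that $r$ is of order $\sqrt{t}$ uniformly in $y \in \supp f$. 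Then I apply Lemma \ref{expansion_sqrt}(2) with $\f(t) = r$: the leading bracket is $-\tfrac{2\ell}{t} + \tfrac12(r/t)^2 + (\text{lower order})$ with $\ell = n/2$, and this becomes negative precisely when $(r/t)^2 < 2n/t + O(t^{-2})$, i.e. $r^2 < 2nt + O(1)$. Since $r \le \rho + d_f \le \sqrt{2nt}$ we get $r^2 \le 2nt$, so the bracket is negative (with room to spare, uniformly, once $t$ is large — the $O(t^{-2})$ and $(r/t)^4$ corrections are dominated because $r^2/t \le 2n$ is bounded). Hence $\tilde k_{n/2}(r,t) < 0$ for every $y \in \supp f$, and so $\tilde J_n(t)f(x) = \tfrac{\ga_n}{4}e^{-t/2}\int_{B_t^n(x)} \tilde k_{n/2}(r,t) f(y)\,dy < 0$ because $f \ge 0$ and $f \not\equiv 0$.

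Next I would quantify the smallness: from the expansion, $|\tilde J_n(t)f(x)|$ is bounded below by something like $c(n,f)\, e^{-t/2} t^{-(n+2)/2} \|f\|_1$ — the worst case is $r$ close to $\sqrt{2nt}$, where the bracket is $O(1/t)$ rather than $O(1)$, so one loses a factor $1/t$; this is still far larger than the error bound $C(n)e^{-t/2}(1+t)^{n/2}\|f\|_{W^{n/2,\infty}}$... which is \emph{not} obviously smaller. This is the main obstacle: the naive error estimate of Lemma \ref{error} is $e^{-t/2}$ times a \emph{polynomial growing} factor, while the principal term carries $e^{-t/2}$ times a \emph{decaying} factor, so a direct comparison fails. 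The resolution must be that the error term $\widetilde{\vect{W}}_n(t)f(x)$ is supported in $B_t^n(x) \cap \supp f$, which for $x$ at distance $\le \sqrt{2nt}-d_f < t$ (for large $t$) from $CS(f)$... is nonempty. So one genuinely needs the finer structure: either (i) a sharper pointwise bound on $\widetilde{\vect{W}}_n(t)f$ showing it too decays when the wave front $S_t^{n-1}(x)$ only grazes $\supp f$, or (ii) — more likely in this paper's strategy — one does \emph{not} compare on all of the parallel body at once but notes that the statement only claims negativity, and uses that $\mathcal{P}$ is negative \emph{and the error has a definite structure}. I expect the actual proof isolates the regime $r$ near $\sqrt{2nt}$ (the boundary of the body), handles it by the $t^{-1}$-order term in the bracket being still strictly negative and comparing against a correspondingly refined error estimate valid there, and handles the bulk $r \le \sqrt{2nt} - \delta\sqrt{t}$ where the bracket is of order $1$ and crushes any polynomial error. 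So the write-up splits the parallel body into an inner part and a thin boundary shell, and the delicate balancing happens only in the shell; that balancing, and the extraction of the right power of $t$ from Lemma \ref{expansion_sqrt} uniformly in $x$, is where the real work lies.
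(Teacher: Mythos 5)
Your overall strategy (show the principal term is negative via the sign of the bracket in Lemma \ref{expansion_sqrt}, then absorb the error via Lemma \ref{error}) is the paper's strategy, and your identification of the key sign condition $r^2 \le 2nt$ is right. But your proof breaks down at exactly the point you flag as ``the main obstacle,'' and the obstacle is not real: it comes from a miscomputation of the size of the principal term. You write that $\lvert \tilde J_n(t)f(x)\rvert$ is bounded below by $c\, e^{-t/2} t^{-(n+2)/2}\|f\|_1$, i.e.\ that the principal term still carries the factor $e^{-t/2}$ after the expansion, and conclude that it cannot dominate the error $O(e^{-t/2}(1+t)^n)$. This is wrong: the kernel $\tilde k_{n/2}(r,t)$ contains the factor $\exp\bigl(\sqrt{t^2-r^2}/2\bigr)$ (the Bessel function grows exponentially), so
\[
e^{-t/2}\,\tilde k_{n/2}(r,t) = \frac{2^{n/2}}{t^{n/2}}\exp\left(-\frac{r^2}{2\left(t+\sqrt{t^2-r^2}\right)}\right)\times(\text{bracket}),
\]
and for $r \le \sqrt{2nt}$ the exponential factor is bounded below by $e^{-n}$ while the bracket is $\le -n/t^2$ uniformly (for $t \ge n^2/4$; this single inequality covers the whole range $0\le r\le\sqrt{2nt}$, including your ``Case A''). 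Hence $\mathcal P(u(x,t)) < -c(n)\,t^{-n/2-2}e^{-n}\|f\|_1$, a \emph{polynomially} decaying negative quantity, against which the genuinely exponentially small error $C(n)e^{-t/2}(1+t)^n\|f\|_{W^{n/2,\infty}}$ is negligible for large $t$. No refinement of the error estimate is needed.

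Consequently your proposed resolution --- splitting the parallel body into a bulk and a thin boundary shell and performing a ``delicate balancing'' in the shell --- is both unnecessary and, as written, not carried out: you leave the shell case open, so the proof is incomplete as it stands. The paper needs no such splitting; a single uniform estimate over $0\le r\le\sqrt{2nt}$ suffices. To repair your write-up, delete the bulk/shell discussion, keep the uniform bound $\text{bracket}\le -n/t^2$ on all of $[0,\sqrt{2nt}]$, insert the lower bound $e^{-n}$ for the Gaussian-type factor, and then the comparison with Lemma \ref{error} closes immediately.
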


\begin{proof}
We give a proof for the even dimensional case. The argument works for the odd dimensional case.

We take $T \geq 2n$, which implies that if $t \geq T$, then, for any $x \in CS (f) + ( \sqrt{2nt} -d_f ) B^n$, the ball $B^n_t(x)$ contains $CS(f)$. 

By Proposition \ref{solution} and Lemma \ref{expansion_sqrt}, if $t$ is large enough, then we have 
\begin{align*}
\mathcal{P} \( u(x,t) \) 
&=\frac{\ga_n 2^{n/2}}{4 t^{n/2}} \int_{B^n_t(x)} \exp \( -\frac{r^2}{2\( t +\sqrt{t^2 -r^2} \)} \) \\
&\quad \times \( -\frac{n}{t} + \frac{1}{2} \( \frac{r}{t} \)^2 + \frac{n+3}{8} \( \frac{r}{t} \)^4  -\frac{3n(n+2)r^2}{8t^3}  + \frac{n(n-2)(n+2)}{4t^2}+ O \( \frac{1}{t^3} \) \) f(y) dy  \\
&<-\frac{n \ga_n 2^{n/2}}{4t^{n/2+2}} \int_{B^n_t (x)} \exp \( -\frac{r^2}{2\( t +\sqrt{t^2 -r^2} \)} \) \( 1+ O \( \frac{1}{t} \) \) f(y) dy \\
&<-\frac{n \ga_n 2^{n/2}}{8 t^{n/2+2}} \int_{B^n_t (x)} \exp \( -\frac{r^2}{2\( t +\sqrt{t^2 -r^2} \)} \)  f(y) dy \\
&<-\frac{n \ga_n 2^{n/2}}{8 t^{n/2+2}}  e^{-n} \| f \|_1 
\end{align*}
for any $x \in CS(f) + \sqrt{2nt} B^n$. 
Here, for the first inequality, we used the fact that if $t \geq n^2/4$, then, for any $0 \leq r \leq \sqrt{2nt}$, we have
\[
-\frac{n}{t} + \frac{1}{2} \( \frac{r}{t} \)^2 + \frac{n+3}{8} \( \frac{r}{t} \)^4  -\frac{3n(n+2)r^2}{8t^3}  + \frac{n(n-2)(n+2)}{4t^2}
\leq -\frac{n}{t^2} ,
\]
and equality holds if $r =\sqrt{2nt}$.
For the second inequality, we took a large enough $T$ such that if $t \geq T$, then we have $1+ O(1/t) >1/2$. The last inequality follows from $r \leq \sqrt{2nt}$. 

By Lemma \ref{error}, if $t$ is large enough, then we have
\[
\lvert \mathcal{E} \( u(x,t) \) \rvert
\leq C(n) e^{-t/2} (1+t)^n \| f \|_{W^{n/2, \infty}} 
< - \frac{n \ga_n 2^{n/2}}{16 t^{n/2+2}}  e^{-n} \| f \|_1
\]
for any $x \in CS(f) + \sqrt{2nt} B^n$.

Hence if $t$ is large enough, then we obtain
\[
u(x,t) <- \frac{n \ga_n 2^{n/2}}{16 t^{n/2+2}}  e^{-n} \| f \|_1  <0
\]
for any $x \in CS(f) + \sqrt{2nt} B^n$.
\end{proof}

\begin{lem}
\label{lem_positivity_null}
Let $f$ be as in Notation \ref{notation_main}.
There exists a constant $T >0$ such that if $t \geq T$, then, for any $x \in A_f ( \sqrt{2nt}, \sqrt{(2n+1)t})$, we have $u(x,t) >0$.
\end{lem}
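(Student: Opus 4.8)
The plan is to compare the principal term $\mathcal{P}(u(x,t))$ with the error term $\mathcal{E}(u(x,t))$ on the annulus $A_f(\sqrt{2nt},\sqrt{(2n+1)t})$, just as in the proof of Proposition~\ref{prop_negativity_null}, but now exploiting the fact that the leading bracket in the expansion of Lemma~\ref{expansion_sqrt} changes sign as $r$ crosses $\sqrt{2nt}$. First I would fix $T$ large enough that for $t\geq T$ and $x\in A_f(\sqrt{2nt},\sqrt{(2n+1)t})$ the ball $B_t^n(x)$ still contains $CS(f)$ (possible since $\sqrt{(2n+1)t}+d_f < t$ for large $t$), so that the whole support of $f$ is inside the domain of integration and the kernel $\tilde k_{n/2}(r,t)$ is evaluated at $r=|x-y|$ with $y$ ranging over $\supp f$. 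For such $x$ and $y$ the distance $r$ satisfies $\sqrt{2nt}\leq r\leq \sqrt{(2n+1)t}+d_f$, which for large $t$ is of order $\sqrt t$, so Lemma~\ref{expansion_sqrt}(2) (even case) applies uniformly in $y$.

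The key point is the sign of the bracket
\[
Q(r,t) = -\frac{n}{t} + \frac{1}{2}\Bigl(\frac{r}{t}\Bigr)^2 + \frac{n+3}{8}\Bigl(\frac{r}{t}\Bigr)^4 - \frac{3n(n+2)r^2}{8t^3} + \frac{n(n-2)(n+2)}{4t^2} + O\Bigl(\frac{1}{t^3}\Bigr).
\]
At $r=\sqrt{2nt}$ the first two terms cancel, leaving $Q$ of order $1/t^2$; for $r\geq\sqrt{2nt}$ the term $\tfrac12(r/t)^2$ dominates $-n/t$, and one checks that on the range $\sqrt{2nt}\leq r\leq\sqrt{(2n+1)t}$ one has $\tfrac12(r/t)^2 - n/t \geq \tfrac{1}{2t}\cdot\tfrac{1}{t}\cdot(r^2 - 2nt)/1 \geq$ a positive multiple of $1/t^2$ once $r^2 - 2nt$ is of order $t$ — more carefully, at $r^2 = (2n+1)t$ the gap is $\tfrac{1}{2t}$, far larger than the $O(1/t^2)$ corrections, while near $r=\sqrt{2nt}$ the lower-order positive terms $\tfrac{n(n-2)(n+2)}{4t^2}$ etc. must be handled by choosing $T$ so the aggregate is $\geq c/t^2$ for some $c=c(n)>0$; the only delicate spot is whether the $-\tfrac{3n(n+2)r^2}{8t^3}$ and $O(1/t^3)$ terms can overwhelm the small positive quantity exactly at $r=\sqrt{2nt}$, but since $r^2/t^3 = 2n/t^2$ there, this contributes $-\tfrac{3n^2(n+2)}{4t^2}$, so I need $\tfrac{n(n-2)(n+2)}{4} - \tfrac{3n^2(n+2)}{4} = -\tfrac{n(n+2)(2n+2)}{4}<0$ — hence the bracket is actually \emph{not} obviously positive right at the inner boundary, and I expect the honest statement to require the slightly larger annulus or a comparison that gains positivity strictly inside; in practice one restricts attention to $r\geq\sqrt{2nt}$ with $r^2 - 2nt$ bounded below by a fixed positive multiple of $t$ on most of the annulus, or uses $\sqrt{(2n+1)t}$ precisely because $t/(2t^2)=1/(2t)\gg 1/t^2$. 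So the main obstacle is verifying that $Q(r,t)\geq c(n)/t^2 > 0$ uniformly for $\sqrt{2nt}\leq r\leq \sqrt{(2n+1)t}+d_f$ and $t\geq T(n,d_f)$; granting this, the principal term satisfies
\[
\mathcal{P}(u(x,t)) \geq \frac{\ga_n 2^{n/2}}{4 t^{n/2}}\int_{B_t^n(x)} \exp\Bigl(-\frac{r^2}{2(t+\sqrt{t^2-r^2})}\Bigr) \frac{c(n)}{t^2} f(y)\,dy \geq \frac{c(n)\ga_n 2^{n/2}}{4 t^{n/2+2}} e^{-C(n)} \|f\|_1,
\]
where $C(n)$ bounds the exponent since $r^2/(2(t+\sqrt{t^2-r^2}))\leq r^2/(2t)\leq (2n+1+ d_f^2/t)/2$ is bounded for large $t$.

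Finally I would dominate the error term: by Lemma~\ref{error}(2), $|\mathcal{E}(u(x,t))| \leq C(n) e^{-t/2}(1+t)^n \|f\|_{W^{n/2,\infty}}$, which decays exponentially and is therefore smaller than the polynomially-decaying lower bound $\tfrac{c(n)\ga_n 2^{n/2}}{8 t^{n/2+2}} e^{-C(n)}\|f\|_1$ once $t\geq T$ is enlarged further (the enlargement depending on $n$, $d_f$, $\|f\|_1$, $\|f\|_{W^{n/2,\infty}}$). Combining, $u(x,t) = \mathcal{P}(u(x,t)) + \mathcal{E}(u(x,t)) > 0$ for all $x\in A_f(\sqrt{2nt},\sqrt{(2n+1)t})$ and $t\geq T$. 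The odd-dimensional case is identical, replacing Lemma~\ref{expansion_sqrt}(2) by (1), Lemma~\ref{error}(2) by (1), and $\tilde k_{n/2}$ by $\tilde k_{(n-1)/2}$ with the corresponding exponents.
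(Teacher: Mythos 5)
Your proposal contains a genuine gap, and in fact you located it yourself without resolving it. The uniform lower bound $Q(r,t)\geq c(n)/t^2>0$ that your argument hinges on is \emph{false}: as your own computation at $r=\sqrt{2nt}$ shows (and as the paper states explicitly), the bracket in Lemma \ref{expansion_sqrt} equals exactly $-n/t^2$ at the inner boundary of the annulus, i.e.\ it is strictly \emph{negative} there. Since points $x$ with $\dist(x,CS(f))=\sqrt{2nt}$ have nearest points $y\in\supp f$ at distance exactly $r=\sqrt{2nt}$, the integrand genuinely takes negative values on part of the domain of integration, and no choice of $T$ rescues a pointwise-positivity argument. Writing ``granting this'' and proceeding does not close the proof.

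The paper's resolution is a decomposition you did not supply: for $x$ in the annulus with foot point $\xi\in\pd CS(f)$ and outer normal $\nu$, split $\supp f$ by the hyperplane $\nu^\perp_\pm+\xi'$ with $\xi'=\xi-(\rho_f/2)\nu$, giving $\mathcal{P}(u(x,t))=P_1+P_2$. The near part $P_1$ is only bounded below by a negative quantity of order $t^{-(n+4)/2}$ (using that the bracket is $\geq -n/t^2$ for $r\geq\sqrt{2nt}$). But for $y$ in the far part one has $r\geq\sqrt{2nt}+\rho_f/2$, so $r^2-2nt\gtrsim\rho_f\sqrt{2nt}$ and the bracket gains a \emph{positive} term of order $\rho_f t^{-3/2}$; combined with the fact that a ball $B^n_{\rho_f/2}(i_f)$ of definite $f$-mass lies in the far half-space (Remark \ref{inscribed_ball}), this yields $P_2\gtrsim \rho_f\, t^{-(n+3)/2}\int_{B^n_{\rho_f/2}(i_f)}f$, which is of strictly larger order than $|P_1|$ and hence dominates for large $t$. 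This is exactly why the inradius $\rho_f$ and the mass of $f$ near an incenter appear in the list of quantities the threshold $T$ depends on. The error-term comparison at the end of your proposal is fine, but it must be measured against the $t^{-(n+3)/2}$ lower bound coming from $P_2$, not against a $t^{-(n/2+2)}$ bound that the principal term does not actually satisfy.
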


\begin{proof}
We give a proof for the even dimensional case. The argument works for the odd dimensional case.

We take a large enough $T$ such that  if $t\geq T$, then we have $t\geq \sqrt{(2n+1)t} +d_f$, which implies that, for any $x \in CS(f) + \sqrt{(2n+1)t} B^n$, the ball $B^n_t(x)$ contains $CS(f)$. For every $x \in A_f ( \sqrt{2nt}, \sqrt{(2n+1)t})$, we take a point $\xi \in \pd  CS(f) $ such that $\dist (x, \pd CS(f) ) =\vert x- \xi \vert$. Let $\nu = (x-\xi )/ \vert x- \xi \vert$ and $\xi' = \xi -(\rho_f /2)\nu$. Let $i_f$ be an incenter of the support of $f$. We decompose the principal term of $u$ as 
\[
\mathcal{P} \( u(x,t) \) 
=\frac{\ga_n}{4}e^{-t/2} \( \int_{\supp f  \cap \( \nu^\perp_+ + \xi' \)} + \int_{\supp f \cap \( \nu^\perp_- + \xi' \)} \) \tilde{k}_{\frac{n}{2}} (r,t) f(y)dy 
=: P_1 + P_2 .
\]

By Lemma \ref{expansion_sqrt}, if $t$ is large enough, then we have
\begin{align*}
P_1
&=\frac{\ga_n 2^{n/2}}{4t^{n/2}} \int_{\supp f  \cap \( \nu^\perp_+ + \xi' \)} \exp \( -\frac{r^2}{2\( t+ \sqrt{t^2 -r^2} \)} \) \\
&\quad \times \( -\frac{n}{t} + \frac{1}{2}\( \frac{r}{t} \)^2  + \frac{n+3}{8} \( \frac{r}{t} \)^4 -\frac{3n(n+2)r^2}{8t^3}  + \frac{n(n-2)(n+2)}{4t^2}+ O \( \frac{1}{t^3} \) \) f(y) dy \\
&>- \frac{n \ga_n 2^{n/2}}{4t^{n/2+2}} \int_{\supp f  \cap \( \nu^\perp_+ + \xi' \)} \exp \( -\frac{r^2}{2\( t+ \sqrt{t^2 -r^2} \)} \)  \( 1+ O \( \frac{1}{t} \) \)  f(y) dy \\
&>- \frac{n \ga_n 2^{n/2}}{2t^{n/2+2}} \int_{\supp f  \cap \( \nu^\perp_+ + \xi' \)} \exp \( -\frac{r^2}{2\( t+ \sqrt{t^2 -r^2} \)} \)   f(y) dy\\
&>- \frac{n \ga_n 2^{n/2}}{2t^{n/2+2}} e^{-n/2} \| f \|_1
\end{align*}
for any $x \in A_f ( \sqrt{2nt}, \sqrt{(2n+1)t})$. 
Here, for the first inequality, we used the fact that, for any $r \geq \sqrt{2nt}$, we have
\[
-\frac{n}{t} + \frac{1}{2}\( \frac{r}{t} \)^2  + \frac{n+3}{8} \( \frac{r}{t} \)^4 -\frac{3n(n+2)r^2}{8t^3}  + \frac{n(n-2)(n+2)}{4t^2}
\geq -\frac{n}{t^2} ,
\]
and equality holds if $r= \sqrt{2nt}$. 
For the third inequality, we took a large enough $T$ such that if $t \geq T$, then we have $1+ O(1/t) <2$. 
The last inequality follows from $r \geq \sqrt{2nt}$.

By Lemma \ref{expansion_sqrt}, if $t$ is large enough, then we have
\begin{align*}
P_2
&=\frac{\ga_n 2^{n/2}}{4t^{n/2}} \int_{\supp f \cap \( \nu^\perp_- +\xi' \)} \exp \( -\frac{r^2}{2\( t+ \sqrt{t^2 -r^2} \)} \) \( -\frac{n}{t} + \frac{1}{2} \( \frac{r}{t} \)^2  + O \( \frac{1}{t^2} \) \) f(y) dy \\
&> \frac{\sqrt{n}\ga_n 2^{(n+1)/2}}{8t^{(n+3)/2}} \rho_f \int_{\supp f \cap \( \nu^\perp_- +\xi' \)} \exp \( -\frac{r^2}{2\( t+ \sqrt{t^2 -r^2} \)} \) \( 1 + O \( \frac{1}{\sqrt{t}} \) \) f(y) dy \\
&> \frac{\sqrt{n}\ga_n 2^{(n+1)/2}}{16t^{(n+3)/2}} \rho_f \int_{\supp f \cap \( \nu^\perp_- +\xi' \)} \exp \( -\frac{r^2}{2\( t+ \sqrt{t^2 -r^2} \)} \)  f(y) dy \\
&> \frac{\sqrt{n}\ga_n 2^{(n+1)/2}}{16t^{(n+3)/2}} e^{-(n+1)} \rho_f \int_{B^n_{\rho_f/2} \( i_f \)} f(y) dy 
\end{align*}
for any $x \in A_f ( \sqrt{2nt}, \sqrt{(2n+1)t})$. 
Here, the first inequality follows from $r \geq \sqrt{2nt}+\rho_f/2$. 
For the second inequality, we took a large enough $T$ such that if $t \geq T$, then we have $1+O(1/ \sqrt{t} ) >1/2$. 
For the last inequality, we took a large enough $T$ such that if $t \geq T$, then we have
\[
r \leq \sqrt{(2n+1)t} +d_f \leq \sqrt{(2n+2)t}.
\]
We also used Remark \ref{inscribed_ball} in the last inequality.

Therefore, if $t$ is large enough, then we obtain
\begin{align*}
\mathcal{P} \( u(x,t) \)  
&>- \frac{n \ga_n 2^{n/2}}{2t^{n/2+2}} e^{-n/2} \| f \|_1 
+\frac{\sqrt{n}\ga_n 2^{(n+1)/2}}{16t^{(n+3)/2}} e^{-(n+1)} \rho_f \int_{B^n_{\rho_f/2} \( i_f \)} f(y) dy \\
&>\frac{\sqrt{n}\ga_n 2^{(n+1)/2}}{32t^{(n+3)/2}} e^{-(n+1)} \rho_f \int_{B^n_{\rho_f/2} \( i_f \)} f(y) dy
\end{align*}
for any $x \in A_f ( \sqrt{2nt}, \sqrt{(2n+1)t})$.

By Lemma \ref{error}, if $t$ is large enough, then we have
\[
\lvert \mathcal{E} \( u(x,t) \) \rvert
\leq C(n) e^{-t/2} (1+t)^n \| f \|_{W^{n/2, \infty}} 
< \frac{\sqrt{n}\ga_n 2^{(n+1)/2}}{64t^{(n+3)/2}} e^{-(n+1)} \rho_f \int_{B^n_{\rho_f/2} \( i_f \)} f(y) dy
\]
for any $x \in A_f ( \sqrt{2nt}, \sqrt{(2n+1)t})$.

Hence if $t$ is large enough, then we obtain 
\[
u(x,t) >\frac{\sqrt{n}\ga_n 2^{(n+1)/2}}{64t^{(n+3)/2}} e^{-(n+1)} \rho_f \int_{B^n_{\rho_f/2} \( i_f \)} f(y) dy>0
\]
for any $x \in A_f ( \sqrt{2nt}, \sqrt{(2n+1)t})$.
\end{proof}

\begin{prop}
\label{prop_positivity_null}
Let $f$ be as in Notation \ref{notation_main}.
Let $\psi (t)$ be of small order of $t$ as $t$ goes to infinity with $\sqrt{(2n+1)t} \leq \psi (t) \leq t$ for any $t>0$.
There exists a constant $T >0$ such that if $t \geq T$, then, for any $x \in A_f (\sqrt{2nt} ,\psi (t) )$, we have $u(x,t) >0$.
\end{prop}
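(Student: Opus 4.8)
The plan is to reduce the statement to Lemma \ref{lem_positivity_null} by splitting the annulus. Cover $A_f(\sqrt{2nt},\psi(t))$ by the two closed sub-annuli $A_f(\sqrt{2nt},\sqrt{(2n+1)t})$ and $A_f(\sqrt{(2n+1)t},\psi(t))$, which is legitimate precisely because the hypothesis $\sqrt{(2n+1)t}\le\psi(t)\le t$ holds. On the inner one, Lemma \ref{lem_positivity_null} already gives $u(\cdot,t)>0$ after a large time, so the whole task reduces to a point $x\in A_f(\sqrt{(2n+1)t},\psi(t))$. Here the situation is in fact \emph{simpler} than in Lemma \ref{lem_positivity_null}, since no splitting of $\supp f$ into near and far halves is needed: writing $\rho=\dist(x,CS(f))$, so that $\sqrt{(2n+1)t}\le\rho\le\psi(t)$, and using $\supp f\subset CS(f)$ together with $\diam CS(f)=d_f$, every $y\in\supp f$ satisfies $\sqrt{(2n+1)t}\le\rho\le r=|x-y|\le\rho+d_f\le\psi(t)+d_f$. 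Because $\psi(t)$ is of small order of $t$, we have $\psi(t)+d_f<t$ for $t$ large, hence $B_t^n(x)\supset\supp f$ and the principal term is the full integral $\mathcal{P}(u(x,t))=\frac{\ga_n}{4}e^{-t/2}\int_{\supp f}\tilde{k}_{\frac{n}{2}}(r,t)f(y)\,dy$. (As in the preceding proofs we treat the even-dimensional case; the odd case, including $n=1$, is entirely analogous with $\tilde{k}_{\frac{n-1}{2}}$ in place of $\tilde{k}_{\frac{n}{2}}$.)

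The first step is to show, via Lemma \ref{expansion_smallo}, that there are constants $c_n>0$ and $T>0$ such that for $t\ge T$ and every $r$ with $\sqrt{(2n+1)t}\le r\le\psi(t)+d_f$ one has
\[
\tilde{k}_{\frac{n}{2}}(r,t)\ge\frac{c_n}{t^{n/2+1}}\exp\left(\frac{\sqrt{t^2-r^2}}{2}\right).
\]
Indeed, the bracket in the expansion of $\tilde{k}_{\frac{n}{2}}(r,t)$ equals $-\frac{n}{t}+\frac12(r/t)^2$ plus terms of size $O((r/t)^4)+O(r^2/t^3)+O(1/t^2)$, and on the stated range the positive part $-\frac{n}{t}+\frac12(r/t)^2$ is at least $\frac{1}{2t}$ while each error term is, uniformly in $r$, of smaller order than it (this uses $r=o(t)$); hence the bracket stays $\ge\frac{1}{4t}$ after a large time. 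Since $r\mapsto\exp(\sqrt{t^2-r^2}/2)$ is decreasing, evaluating at $r=\psi(t)+d_f$ and using $\sqrt{t^2-a^2}\ge t-a^2/t$ we obtain, for some $c>0$ and all large $t$,
\[
\mathcal{P}(u(x,t))\ge\frac{c}{t^{n/2+1}}e^{-t/2}\exp\left(\frac{\sqrt{t^2-(\psi(t)+d_f)^2}}{2}\right)\|f\|_1\ge\frac{c}{t^{n/2+1}}\exp\left(-\frac{(\psi(t)+d_f)^2}{2t}\right)\|f\|_1.
\]

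Finally, by Lemma \ref{error} the error term obeys $|\mathcal{E}(u(x,t))|\le C(n)e^{-t/2}(1+t)^n\|f\|_{W^{n/2,\infty}}$. Since $\psi(t)$ is of small order of $t$, the exponent $(\psi(t)+d_f)^2/(2t)$ is also of small order of $t$, so the lower bound just obtained for $\mathcal{P}(u(x,t))$ decays more slowly than $e^{-\varepsilon t}$ for every $\varepsilon>0$, whereas the error bound decays like $e^{-t/2}$ times a polynomial; hence $\mathcal{P}(u(x,t))>|\mathcal{E}(u(x,t))|$ for $t$ large, and $u(x,t)=\mathcal{P}(u(x,t))+\mathcal{E}(u(x,t))>0$. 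Together with Lemma \ref{lem_positivity_null} this proves the proposition. The one point that genuinely requires care is the uniformity in $r$ of the lower bound for $\tilde{k}_{\frac{n}{2}}(r,t)$: Lemma \ref{expansion_smallo} is stated for a fixed function of $t$, so one should either re-run that estimate treating $r/t$ as an extra small parameter, or bound $\tilde{k}_{\frac{n}{2}}(r,t)$ directly from the recursion in Remark \ref{recursion} and the expansion of $k_\ell$ in Remark \ref{expansion_k}, noting that $s=\sqrt{t^2-r^2}/2$ is comparable to $t$ on the relevant range. Everything else is routine estimation of the kind already carried out in Proposition \ref{prop_negativity_null} and Lemma \ref{lem_positivity_null}.
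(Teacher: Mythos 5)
Your proof is correct and follows essentially the same route as the paper's: reduce to $A_f(\sqrt{(2n+1)t},\psi(t))$ via Lemma \ref{lem_positivity_null}, use $r\ge\sqrt{(2n+1)t}$ so that the bracket in the expansion of $\tilde{k}_{\frac{n}{2}}(r,t)$ is bounded below by a constant times $(r/t)^2$, hence by $c/t$, and then beat the $e^{-t/2}$ error term by the lower bound $c\,t^{-n/2-1}\exp\!\left(-(\psi(t)+d_f)^2/(2t)\right)\|f\|_1$. Your remark about the uniformity in $r$ of the expansion is a fair point that the paper passes over silently, and your suggested fixes are the right ones.
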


\begin{proof}
We give a proof for the even dimensional case. The argument works for the odd dimensional case.

We take a large enough $T$ such that  if $t\geq T$, then we have $t \geq \psi (t) +d_f$, which implies that, for any $x \in CS(f) + \psi (t) B^n$, the ball $B^n_t(x)$ contains $CS(f)$. 
By Lemma \ref{lem_positivity_null}, we show the existence of a constant $T >0$ such that if $t \geq T$, then, for any $x \in A_f (\sqrt{(2n+1)t} ,\psi (t) )$, we have $u(x,t) >0$. 

By Proposition \ref{solution} and Lemma \ref{expansion_smallo}, if $t$ is large enough, then we have
\begin{align*}
&\mathcal{P} \( u(x,t) \) \\
&=\frac{\ga_n 2^{n/2}}{4 t^{n/2}} \int_{B^n_t(x)} \exp \( -\frac{r^2}{2\( t +\sqrt{t^2 -r^2} \)} \)  \\
&\quad \times \( -\frac{n}{t} + \( \frac{2n}{2n+1} +\frac{1}{2n+1} \) \frac{1}{2} \( \frac{r}{t} \)^2  + O \( \( \frac{r}{t} \)^4 \) + O \( \frac{r^2}{t^3} \) + O \( \frac{1}{t^2} \) \) f(y) dy \\
&>\frac{\ga_n 2^{n/2}}{4 t^{n/2}} \int_{B^n_t(x)} \exp \( -\frac{r^2}{2\( t +\sqrt{t^2 -r^2} \)} \)  \\
&\quad \times  \( \frac{1}{2(2n+1)}  \( \frac{r}{t} \)^2   + O \( \( \frac{r}{t} \)^4 \) +O \( \frac{r^2}{t^3} \) + O \( \frac{1}{t^2} \) \) f(y) dy \\
&>\frac{\ga_n 2^{n/2}}{4 t^{n/2}} \int_{B^n_t(x)} \exp \( -\frac{r^2}{2\( t +\sqrt{t^2 -r^2} \)} \)   \frac{1}{4(2n+1)}  \( \frac{r}{t} \)^2   f(y) dy \\
&>\frac{\ga_n 2^{n/2}}{16 t^{n/2+1}} \int_{B^n_t(x)} \exp \( -\frac{r^2}{2\( t +\sqrt{t^2 -r^2} \)} \)    f(y) dy \\
&>\frac{\ga_n 2^{n/2}}{16 t^{n/2+1}} \exp \( -\frac{\( \psi (t)+d_f \)^2}{2t} \)  \| f \|_1 
\end{align*}
for any $x \in A_f (\sqrt{(2n+1)t} ,\psi (t) )$. 
Here, the first and third inequalities follow from $r \geq \sqrt{(2n+1)t}$. 
For the second inequality, we took a large enough $T$ such that if $t\geq T$, then we have
\[
\frac{1}{2(2n+1)}  \( \frac{r}{t} \)^2  + O \( \( \frac{r}{t} \)^4 \)+ O \( \frac{r^2}{t^3} \) + O \( \frac{1}{t^2} \) 
\geq \frac{1}{4(2n+1)}  \( \frac{r}{t} \)^2 
\]
for any $\sqrt{(2n+1)t} \leq r \leq \psi (t) +d_f$.
The last inequality follows from $r \leq \psi (t) +d_f$.

By Lemma \ref{error}, if $t$ is large enough, then we have
\[
\lvert \mathcal{E} \( u(x,t) \) \rvert
\leq C(n) e^{-t/2} (1+t)^n \| f \|_{W^{n/2, \infty}} 
< \frac{\ga_n 2^{n/2}}{32t^{n/2+1}} \exp \( -\frac{\( \psi (t)+d_f \)^2}{2t} \)   \| f \|_1 
\]
for any $x \in A_f (\sqrt{(2n+1)t} ,\psi (t) )$.

Hence if $t$ is large enough, then we obtain 
\[
u(x,t) >\frac{\ga_n 2^{n/2}}{32t^{n/2+1}} \exp \( -\frac{\( \psi (t)+d_f \)^2}{2t} \)   \| f \|_1 >0
\]
for any $x \in A_f (\sqrt{(2n+1)t} ,\psi (t) )$.
\end{proof}

\begin{prop}
\label{prop_monotonicity_null}
Let $f$ be as in Notation \ref{notation_main}.
There exists a constant $T \geq d_f^2 /(2n)$ such that if $t \geq T$, then, for any $(\xi ,\nu) \in \rN CS(f)$ and $x \in A_f (\sqrt{2nt}-d_f , \sqrt{2nt} ) \cap ( \Pos \{ \nu \} + \xi )$, we have $\nu \cdot \nabla u(x,t) >0$.
\end{prop}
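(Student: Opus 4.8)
\noindent\emph{Sketch of proof (proposal).}\ The plan is to follow the scheme of Propositions \ref{prop_negativity_null}--\ref{prop_positivity_null}: bound the principal term $\mathcal{P}(\nu\cdot\nabla u(x,t))$ from below by a strictly positive quantity of polynomial order in $1/t$, and then absorb the error term $\mathcal{E}(\nu\cdot\nabla u(x,t))$, which is $O(e^{-t/2}(1+t)^{n+1})$ by Lemma \ref{error}, once $t$ is large. I write out the even dimensional case; the argument works for the odd dimensional case with obvious changes of indices. Recall that
\[
\mathcal{P}(\nu\cdot\nabla u(x,t))=-\frac{\ga_n}{16}\int_{B^n_t(x)}e^{-t/2}\tilde{k}_{\frac{n+2}{2}}(r,t)\,f(y)\,\nu\cdot(x-y)\,dy .
\]
Fix $(\xi,\nu)\in\rN CS(f)$ and $x\in A_f(\sqrt{2nt}-d_f,\sqrt{2nt})\cap(\Pos\{\nu\}+\xi)$, and write $x=\xi+\rho\nu$ with $\rho\geq0$. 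Since the supporting hyperplane $\nu^\perp_-+\xi$ separates $x$ from $CS(f)$, we have $\rho=\dist(x,CS(f))\in[\sqrt{2nt}-d_f,\sqrt{2nt}]$; and for $T$ large enough $B^n_t(x)\supset CS(f)\supset\supp f$, so the integral above runs over $\supp f$. For every $y\in\supp f\subset CS(f)\subset\nu^\perp_-+\xi$ we have $\nu\cdot(\xi-y)\geq0$ and $|\xi-y|\leq d_f$, hence
\[
\sqrt{2nt}-d_f\leq\rho\leq\nu\cdot(x-y)=\rho+\nu\cdot(\xi-y),\qquad \rho\leq r=|x-y|\leq\rho+d_f\leq\sqrt{2nt}+d_f .
\]

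Next I fix the sign of $\tilde{k}_{\frac{n+2}{2}}(r,t)$ on the range $\sqrt{2nt}-d_f\leq r\leq\sqrt{2nt}+d_f$. By Lemma \ref{expansion_sqrt} with $\ell=\tfrac{n+2}{2}$, and using $\tfrac{\sqrt{t^2-r^2}}{2}-\tfrac t2=-\tfrac{r^2}{2(t+\sqrt{t^2-r^2})}$,
\[
e^{-t/2}\tilde{k}_{\frac{n+2}{2}}(r,t)=\frac{2^{(n+2)/2}}{t^{(n+2)/2}}\exp\!\left(-\frac{r^2}{2(t+\sqrt{t^2-r^2})}\right)\left(\frac{r^2-2(n+2)t}{2t^2}+O\!\left(\frac1{t^2}\right)\right).
\]
Since $r^2\leq 2nt+2d_f\sqrt{2nt}+d_f^2$, the gap $2(n+2)t-r^2\geq 4t-O(\sqrt t)$ is positive for $T$ large, so the bracket is $\leq-1/t$ there and $e^{-t/2}\tilde{k}_{\frac{n+2}{2}}(r,t)<0$; moreover $\exp(-r^2/(2(t+\sqrt{t^2-r^2})))\geq e^{-r^2/(2t)}\geq e^{-(n+1)}$ for $T$ large. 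Feeding these bounds, together with $f\geq0$ and $\nu\cdot(x-y)\geq\sqrt{2nt}-d_f\geq\tfrac12\sqrt{2nt}$ (valid for $T$ large), into the formula for $\mathcal{P}(\nu\cdot\nabla u(x,t))$ yields
\[
\mathcal{P}(\nu\cdot\nabla u(x,t))>\frac{\ga_n 2^{(n+2)/2}}{16\,t^{(n+2)/2}}\cdot\frac1t\cdot e^{-(n+1)}\cdot\frac{\sqrt{2nt}}{2}\,\|f\|_1=\frac{\ga_n 2^{(n+2)/2}\sqrt{2n}\,e^{-(n+1)}}{32}\cdot\frac{\|f\|_1}{t^{(n+3)/2}} .
\]

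Finally, by Lemma \ref{error}, $|\mathcal{E}(\nu\cdot\nabla u(x,t))|\leq C(n)e^{-t/2}(1+t)^{n+1}\|f\|_{W^{(n+2)/2,\infty}}$, which is smaller than half the lower bound just obtained once $T$ is large enough (now depending also on $\|f\|_1$, $\|f\|_{W^{(n+2)/2,\infty}}$ and $d_f$); hence $\nu\cdot\nabla u(x,t)>0$ for all such $x$, and enlarging $T$ so that $T\geq d_f^2/(2n)$ as well completes the proof. The only genuinely non-routine ingredients are the sign determination of $\tilde{k}_{\frac{n+2}{2}}(r,t)$, which I expect to be the main point and which hinges on the strict positivity of $2(n+2)t-r^2$ on the relevant $r$-range, and the elementary separation argument giving $\nu\cdot(x-y)\geq\rho$ uniformly over $y\in\supp f$; everything else duplicates the bookkeeping already carried out in the preceding proofs.
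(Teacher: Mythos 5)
Your proposal is correct and follows essentially the same route as the paper's proof: expand $e^{-t/2}\tilde{k}_{\frac{n+2}{2}}(r,t)$ via Lemma \ref{expansion_sqrt}, observe that on $\sqrt{2nt}-d_f\leq r\leq\sqrt{2nt}+d_f$ the bracket $-\frac{n+2}{t}+\frac12(r/t)^2+O(t^{-2})$ is negative of order $1/t$, bound the Gaussian factor below by a constant and $\nu\cdot(x-y)$ below by $c\sqrt{t}$ to get a lower bound of order $t^{-(n+3)/2}$, and absorb the exponentially small error term from Lemma \ref{error}. The only differences are cosmetic choices of constants (e.g.\ $-1/t$ versus the paper's $-3/(2t)$, $e^{-(n+1)}$ versus $e^{-(2n+1)/2}$).
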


\begin{proof}
We give a proof for the even dimensional case. The argument works for the odd dimensional case.

We take a large enough $T$ such that  if $t\geq T$, then we have $t \geq \sqrt{2nt} +d_f$, which implies that, for any $x \in CS(f) + \sqrt{2nt} B^n$, the ball $B^n_t(x)$ contains $CS(f)$.

By Lemmas \ref{derivative} and \ref{expansion_sqrt}, if $t$ is large enough, then we have
\begin{align*}
&\mathcal{P} \( \nu \cdot \nabla u (x,t) \) \\
&=-\frac{\ga_n 2^{n/2+1}}{16t^{n/2+1}} \int_{B^n_t (x)} \exp \( -\frac{r^2}{2 \( t+ \sqrt{t^2 -r^2} \)} \) 
\( -\frac{n+2}{t} + \frac{1}{2} \( \frac{r}{t} \)^2 + O \( \frac{1}{t^2} \)  \) f(y) \nu \cdot \( x-y \)  dy \\
&>\frac{3\ga_n 2^{n/2+1}}{32t^{n/2+2}} \int_{B^n_t (x)} \exp \( -\frac{r^2}{2 \( t+ \sqrt{t^2 -r^2} \)} \) 
\( 1  + O \( \frac{1}{t} \)  \)  f(y) \nu \cdot \( x-y \) dy \\
&>\frac{3\ga_n 2^{n/2+1}}{64t^{n/2+2}} \int_{B^n_t (x)} \exp \( -\frac{r^2}{2 \( t+ \sqrt{t^2 -r^2} \)} \) 
  f(y) \nu \cdot \( x-y \) dy \\
&>\frac{3\ga_n 2^{n/2+1}}{64t^{n/2+2}} \exp \( -\frac{2n+1}{2} \) \int_{B^n_t (x)}   f(y) \nu \cdot \( x-y \) dy \\
&>\frac{3\sqrt{2n-1} \ga_n  2^{n/2+1}}{64t^{n/2+3/2}} \exp \( -\frac{2n+1}{2} \) \| f \|_1 
\end{align*}
for any $(\xi ,\nu) \in \rN CS(f)$ and $x \in A_f (\sqrt{2nt}-d_f , \sqrt{2nt} ) \cap ( \Pos \{ \nu \} + \xi )$. 
Here, for the first and third inequalities, we took a large enough $T$ such that if $t\geq T$, then we have
\[
r \leq \sqrt{2nt} +d_f \leq \sqrt{(2n+1)t} .
\]
For the second inequality, we took a large enough $T$ such that if $t \geq T$, then we have $1+O(1/t) >1/2$.
For the last inequality, we took a large enough $T$ such that if $t\geq T$, then we have
\[
\nu \cdot (x-y) \geq \sqrt{2nt} -d_f \geq \sqrt{(2n-1)t} 
\]
for any $x \in A_f ( \sqrt{2nt} -d_f ,\sqrt{2nt} )$ and $y \in B^n_t(x)$.

By Lemma \ref{error}, if $t$ is large enough, then we have
\[
\lvert \mathcal{E} \( \nu \cdot \nabla u(x,t) \) \rvert
\leq C(n) e^{-t/2} (1+t)^{n+1} \| f \|_{W^{(n+2)/2, \infty}} 
< \frac{3\sqrt{2n-1} \ga_n 2^{n/2+1}}{128t^{n/2+3/2}} \exp \( -\frac{2n+1}{2} \) \| f \|_1 
\]
for any$(\xi ,\nu) \in \rN CS(f)$ and $x \in A_f (\sqrt{2nt}-d_f , \sqrt{2nt} ) \cap ( \Pos \{ \nu \} + \xi )$.

Hence if $t$ is large enough, then we obtain 
\[
\nu \cdot \nabla u(x,t) > \frac{3\sqrt{2n-1} \ga_n 2^{n/2+1}}{128t^{n/2+3/2}} \exp \( -\frac{2n+1}{2} \) \| f \|_1   >0
\]
for any $(\xi ,\nu) \in \rN CS(f)$ and $x \in A_f (\sqrt{2nt}-d_f , \sqrt{2nt} ) \cap ( \Pos \{ \nu \} + \xi )$.
\end{proof}

\begin{thm}
\label{null}
Let $f$ be as in Notation \ref{notation_main}.
Let $\psi (t)$ be of small order of $t$ as $t$ goes to infinity, and $\sqrt{(2n+1)t} \leq \psi (t) \leq t$ for any $t>0$. There exists a constant $T \geq d_f^2 /(2n)$ such that if $t \geq T$, we have
\[
\mathcal{N} \( u(\cdot ,t) \) \cap \( CS(f) + \psi (t) B^n \) 
=\bigcup_{( \xi ,\nu ) \in \rN CS (f)} \left. \left\{ x \in A^\circ_f \( \sqrt{2nt}-d_f , \sqrt{2nt} \) \cap \( \Pos \{ \nu \} + \xi \) \rvert u(x,t) =0 \right\} .
\]
and the following statements hold:
\begin{enumerate}[$(1)$]
\item If $n=1$, then the restriction of $u(\cdot ,t)$ to $CS(f) + \psi (t) B^1$ has just two zeros. 
\item If $n\geq 2$, then $\mathcal{N} ( u(\cdot ,t) ) \cap ( CS(f) + \psi (t) B^n )$ is a smooth hyper-surface homeomorphic to $S^{n-1}$.
\end{enumerate}
\end{thm}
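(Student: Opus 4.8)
The plan is to extract the theorem from Propositions \ref{prop_negativity_null}, \ref{prop_positivity_null} and \ref{prop_monotonicity_null} by foliating the relevant part of $\R^n$ by rays in the outer unit normal directions of $CS(f)$, applying the intermediate value theorem along each ray, and then assembling a homeomorphism onto a round sphere from the nearest-point projection onto $CS(f)$. No new analytic estimate is needed; all the quantitative work is already carried by those three propositions.

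First, take $T$ larger than the constants furnished by Propositions \ref{prop_negativity_null}, \ref{prop_positivity_null} and \ref{prop_monotonicity_null} and than $d_f^2/(2n)$, so that $0<\sqrt{2nt}-d_f$, and note that from $\sqrt{(2n+1)t}\le\psi(t)\le t$ we also have $\sqrt{2nt}\le\psi(t)$. Splitting $CS(f)+\psi(t)B^n$ according to the distance to $CS(f)$, Proposition \ref{prop_negativity_null} gives $u(\cdot,t)<0$ on $CS(f)+(\sqrt{2nt}-d_f)B^n$ and Proposition \ref{prop_positivity_null} gives $u(\cdot,t)>0$ on $A_f(\sqrt{2nt},\psi(t))$; hence every spatial zero of $u(\cdot,t)$ lying in $CS(f)+\psi(t)B^n$ is contained in the open annulus $A^\circ_f(\sqrt{2nt}-d_f,\sqrt{2nt})$.

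Next, I would use that for $(\xi,\nu)\in\rN CS(f)$ the supporting property $\nu^\perp_-+\xi\supset CS(f)$ forces $\xi$ to be the unique nearest point of $CS(f)$ to $\xi+\rho\nu$ and $\dist(\xi+\rho\nu,CS(f))=\rho$ for every $\rho\ge0$. Consequently the segment $\{\xi+\rho\nu:\sqrt{2nt}-d_f\le\rho\le\sqrt{2nt}\}$ is exactly $A_f(\sqrt{2nt}-d_f,\sqrt{2nt})\cap(\Pos\{\nu\}+\xi)$, these segments (for varying $(\xi,\nu)$) are pairwise disjoint off $\partial CS(f)$ by uniqueness of nearest points, and together they cover the annulus, the pair $(\xi,\nu)$ being recovered from a point $x$ of the annulus by nearest-point projection. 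Along one such segment $\rho\mapsto u(\xi+\rho\nu,t)$ has derivative $\nu\cdot\nabla u(\xi+\rho\nu,t)>0$ by Proposition \ref{prop_monotonicity_null}, is negative at $\rho=\sqrt{2nt}-d_f$ and positive at $\rho=\sqrt{2nt}$, hence possesses a unique zero at some $\rho=\rho^\ast(\xi,\nu)\in(\sqrt{2nt}-d_f,\sqrt{2nt})$. This yields the claimed set equality; moreover, at each such zero $\nabla u(\cdot,t)\ne0$, so the zero lies in $\supp(u(\cdot,t))^\circ$ and, by the implicit function theorem, $\mathcal{N}(u(\cdot,t))\cap(CS(f)+\psi(t)B^n)$ is a smooth hypersurface near each of its points.

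Finally, for the global topology when $n\ge2$, fix $\rho_0:=\sqrt{2nt}$ and recall that $\partial(CS(f)+\rho_0B^n)$, being the boundary of a convex body, is homeomorphic to $S^{n-1}$. Define $\Psi\colon\partial(CS(f)+\rho_0B^n)\to\R^n$ by $\Psi(q)=\xi+\rho^\ast(\xi,\nu)\nu$, where $\xi$ is the nearest point of $CS(f)$ to $q$ and $\nu=(q-\xi)/\rho_0$. Then $\Psi$ maps onto $\mathcal{N}(u(\cdot,t))\cap(CS(f)+\psi(t)B^n)$ (surjectivity from the set equality), it is injective because distinct $q$ have distinct nearest-point data and hence distinct images, and it is continuous because the nearest-point projection is continuous and $\rho^\ast$ depends continuously on its argument (by uniqueness of the zero together with the boundedness of its range, or via the implicit function theorem since the $\rho$-derivative of $u(\xi+\rho\nu,t)$ is positive at the zero). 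A continuous bijection from a compact space to a Hausdorff space is a homeomorphism, which proves (2); for $n=1$, $\rN CS(f)$ consists of the two endpoints of the interval $CS(f)$ with their outward unit normals, so the same construction produces exactly two zeros, establishing (1). The point deserving care is this last assembly step — checking that nearest-point projection identifies the graph-type null set with a round sphere and that $\rho^\ast$ varies continuously (indeed smoothly) — rather than anything analytic.
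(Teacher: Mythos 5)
Your proposal is correct and follows essentially the same route as the paper: Propositions \ref{prop_negativity_null} and \ref{prop_positivity_null} confine the null set to $A^\circ_f(\sqrt{2nt}-d_f,\sqrt{2nt})$, Proposition \ref{prop_monotonicity_null} plus the intermediate value theorem give a unique zero on each normal ray, and the normal-ray parametrization of the annulus (which the paper packages as Remark \ref{correspondence}) together with the implicit function theorem yields the smooth hypersurface homeomorphic to $\pd CS(f)\cong S^{n-1}$. Your explicit verification that the map $q\mapsto\xi+\rho^\ast(\xi,\nu)\nu$ is a continuous bijection from a compact space to a Hausdorff space just makes the paper's final step more detailed, not different.
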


\begin{proof}
From Propositions \ref{prop_negativity_null} and \ref{prop_positivity_null}, there exists a constant $T \geq d_f^2 /(2n)$ such that if $t \geq T$, then we have 
\[
\mathcal{N} \( u(\cdot ,t) \) \cap \( CS(f) + \psi (t) B^n \) \subset 
A^\circ_f \( \sqrt{2nt}-d_f , \sqrt{2nt} \) ,
\]

By Proposition \ref{prop_monotonicity_null}, if $t$ is large enough, then, for each $(\xi , \nu) \in \rN CS(f)$, we have a unique point $x \in A_f ( \sqrt{2nt}-d_f , \sqrt{2nt} ) \cap ( \Pos \{ \nu \} + \xi )$ with $u(x,t)=0$ and $\nu \cdot \nabla u(x,t) >0$.
By the correspondence in Remark \ref{correspondence}, implicit function theorem implies that the set $\mathcal{N} ( u(\cdot ,t) ) \cap ( CS(f) + \psi (t) B^n )$ is a smooth hyper-surface homeomorphic to $\pd  CS(f) $.
Since $\pd  CS(f)$ is homeomorphic to $S^{n-1}$, so is $\mathcal{N} ( u(\cdot ,t) ) \cap ( CS(f) + \psi (t) B^n )$.
\end{proof}

\section{Spatial critical points}

Let $f$ be as in Notation \ref{notation_main}, and $u$ the unique classical solution of \eqref{DWfg} with $f+g=0$. In this section, we discuss the large time behavior of the (non-trivial) spatial critical set of $u$,
\begin{equation}
\mathcal{C} \( u(\cdot ,t ) \) = \left. \left\{ x \in \supp \( u \( \cdot ,t \) \)^\circ \rvert \nabla u(x,t) =0 \right\} . 
\end{equation}

\begin{prop}
\label{prop_positivity_crit}
Let $f$ be as in Notation \ref{notation_main}.
There exists a constant $T \geq d_f^2 /(2n+4)$ such that if $t\geq T$, then, for any $(\xi ,\nu ) \in \rN CS(f)$ and $x \in A_f ( 0, \sqrt{(2n+4)t}-d_f ) \cap ( \Pos \{ \nu \} + \xi )$, we have $\nu \cdot \nabla u (x,t) >0$. 
\end{prop}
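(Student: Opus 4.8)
The plan is to run the three-step scheme of the previous propositions: bound the principal term $\mathcal{P}(\nu\cdot\nabla u(x,t))$ from below by a quantity that decays only polynomially in $t$, bound the error term $\mathcal{E}(\nu\cdot\nabla u(x,t))$ by Lemma \ref{error}, and add. I present the argument for $n$ even; the odd case is identical up to a shift of indices. Choose $T\ge\max\{d_f^2/(2n+4),\,2n+4\}$ (to be enlarged finitely often). For $t\ge T$ and $x\in A_f(0,\sqrt{(2n+4)t}-d_f)$ the ball $B^n_t(x)$ contains $CS(f)$, because every point of $CS(f)$ lies within distance $\dist(x,CS(f))+d_f\le\sqrt{(2n+4)t}\le t$ of $x$.

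Fix $(\xi,\nu)\in\rN CS(f)$ and write $x=\xi+\rho\nu$, so that $0\le\rho=\dist(x,CS(f))\le\sqrt{(2n+4)t}-d_f$. For every $y\in\supp f\subset CS(f)\subset\nu^\perp_-+\xi$ we have $\nu\cdot(x-y)=\rho+\nu\cdot(\xi-y)\ge\rho\ge0$ and $r=|x-y|\le\rho+|\xi-y|\le\rho+d_f\le\sqrt{(2n+4)t}$. By the expression of the principal term recalled in Section 2,
\[
\mathcal{P}(\nu\cdot\nabla u(x,t))=\frac{\ga_n}{16}\,e^{-t/2}\int_{B^n_t(x)}\bigl(-\tilde{k}_{\frac{n+2}{2}}(r,t)\bigr)\,f(y)\,\nu\cdot(x-y)\,dy,
\]
so the whole integrand is supported where $0\le r\le\sqrt{(2n+4)t}$, and everything hinges on the sign of $\tilde{k}_{\frac{n+2}{2}}(r,t)$ on that range.

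The key claim is that there are $c=c(n)>0$ and $T$ with $e^{-t/2}\bigl(-\tilde{k}_{\frac{n+2}{2}}(r,t)\bigr)\ge c\,t^{-(n+6)/2}$ for all $0\le r\le\sqrt{(2n+4)t}$ and $t\ge T$. By Lemma \ref{expansion_sqrt}(2), used uniformly for $0\le r\le\sqrt{(2n+4)t}$ exactly as in the proof of Proposition \ref{prop_negativity_null}, the bracket in $\tilde{k}_{\frac{n+2}{2}}(r,t)$ equals $-\tfrac{n+2}{t}+\tfrac12(r/t)^2+O(1/t^2)$, which is $\le 0$ on that range since $2\cdot\tfrac{n+2}{2}=n+2$. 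For $r^2\le(2n+3)t$ the first two terms alone are $\le-\tfrac1t$ and dominate the remainder for large $t$; for $r^2\in[(2n+3)t,(2n+4)t]$ one writes $r^2=(2n+4)t-s$ with $s\in[0,t]$ and retains the next two terms of Lemma \ref{expansion_sqrt}(2): the bracket becomes $-\tfrac{s}{2t^2}+t^{-2}P(s/t)+O(1/t^3)$ with $P$ a fixed quadratic, and the algebraic identity $2\ell^2(2\ell+3)-6\ell^2(\ell+1)+2\ell(\ell-1)(\ell+1)=-2\ell$ with $\ell=\tfrac{n+2}{2}$ gives $P(0)=-(n+2)<0$, hence (by continuity) the bracket is $<0$ and of size $\asymp t^{-2}$ on this sub-range. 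Since $e^{-t/2}\exp\!\bigl(\tfrac12\sqrt{t^2-r^2}\bigr)=\exp\!\bigl(-\tfrac{r^2}{2(t+\sqrt{t^2-r^2})}\bigr)\ge e^{-(n+2)}$ for $r^2\le(2n+4)t$, the claimed bound follows (the worst rate $t^{-(n+6)/2}$ occurring near $r=\sqrt{(2n+4)t}$; near $\pd CS(f)$, where $r$ stays bounded, Lemma \ref{expansion_d}(1) even gives $t^{-(n+4)/2}$).

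Granting the claim, and using $\nu\cdot(x-y)\ge\rho_f/2$ for $y\in B^n_{\rho_f/2}(i_f)$ — valid because $B^n_{\rho_f}(i_f)\subset CS(f)\subset\nu^\perp_-+\xi$ forces $\nu\cdot(\xi-i_f)\ge\rho_f$ — together with $f\ge0$ and $\nu\cdot(x-y)\ge0$ off that ball, we get
\[
\mathcal{P}(\nu\cdot\nabla u(x,t))\ \ge\ \frac{\ga_n\,c(n)}{16}\,t^{-(n+6)/2}\cdot\frac{\rho_f}{2}\int_{B^n_{\rho_f/2}(i_f)}f(y)\,dy\ >\ 0
\]
(positive by Remark \ref{inscribed_ball}), with only polynomial decay. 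Lemma \ref{error} bounds $|\mathcal{E}(\nu\cdot\nabla u(x,t))|$ by $C(n)e^{-t/2}(1+t)^{n+1}\|f\|_{W^{(n+2)/2,\infty}}$, which is below half of the right-hand side once $T$ is enlarged; therefore $\nu\cdot\nabla u(x,t)=\mathcal{P}+\mathcal{E}>0$, uniformly over $(\xi,\nu)\in\rN CS(f)$ and over $x$ in the stated region. The main obstacle is the endpoint regime $r\approx\sqrt{(2n+4)t}$: there the two leading terms of the kernel cancel identically — which is exactly what singles out the radius $\sqrt{(2n+4)t}$ — so one must expand the kernel two orders beyond the leading term and verify that the resulting $t^{-2}$-coefficient is strictly negative; the rest is a routine repetition of the earlier arguments.
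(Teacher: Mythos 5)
Your proposal is correct and follows essentially the same route as the paper: the same principal/error decomposition, the same uniform expansion of $\tilde{k}_{\frac{n+2}{2}}$ showing the bracket is at most $-c(n)/t^{2}$ on $0\le r\le\sqrt{(2n+4)t}$ (with the two leading terms cancelling exactly at the endpoint radius), the same lower bound $\nu\cdot(x-y)\ge\rho_f/2$ on the half-ball $B^n_{\rho_f/2}(i_f)$ via Remark \ref{inscribed_ball}, and the same exponentially small error estimate from Lemma \ref{error}. Only cosmetic slips: for $r^2\le(2n+3)t$ the two leading terms give $-\tfrac{1}{2t}$ rather than $-\tfrac{1}{t}$, and on the endpoint sub-range the bracket is bounded above by $-c\,t^{-2}$ rather than being uniformly of size $\asymp t^{-2}$; neither affects the argument.
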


\begin{proof}
We give a proof for the even dimensional case. The argument works for the odd dimensional case.

We take $T\geq 2n+4$, which implies that if $t \geq T$, then, for any $x \in CS (f) + (\sqrt{(2n+4)t}-d_f)B^n$, the ball $B^n_t(x)$ contains $CS(f)$. Let $i_f$ be an incenter of the support of $f$.

By Lemmas \ref{derivative} and \ref{expansion_sqrt}, if $t$ is large enough, then we have
\begin{align*}
&\mathcal{P} \( \nu \cdot \nabla u(x,t) \) \\
&=-\frac{\ga_n 2^{n/2+1}}{16 t^{n/2+1}} \int_{B^n_t(x)} \exp \( -\frac{r^2}{2 \( t+\sqrt{t^2 -r^2} \)} \) \( -\frac{n+2}{t} + \frac{1}{2} \( \frac{r}{t} \)^2  +\frac{n+5}{8} \( \frac{r}{t} \)^4 \right. \\
&\quad \left. -\frac{3(n+2)(n+4)r^2}{8t^3} + \frac{n(n+2)(n+4)}{4t^2} +  O\( \frac{1}{t^3} \) \)   f(y) \nu \cdot \( x-y \) dy \\
&> \frac{(n+2) \ga_n 2^{n/2+1}}{16 t^{n/2+3}} \int_{B^n_t(x)} \exp \( -\frac{r^2}{2 \( t+\sqrt{t^2 -r^2} \)} \) \( 1+ O \( \frac{1}{t} \) \) f(y) \nu \cdot (x-y ) dy \\
&> \frac{(n+2) \ga_n 2^{n/2+1}}{32 t^{n/2+3}} \int_{B^n_t(x)} \exp \( -\frac{r^2}{2 \( t+\sqrt{t^2 -r^2} \)} \)  f(y) \nu \cdot (x-y ) dy \\
&> \frac{(n+2) \ga_n 2^{n/2+1}}{32 t^{n/2+3}} e^{-(n+2)} \int_{B^n_t(x)} f(y) \nu \cdot (x-y ) dy \\
&> \frac{(n+2) \ga_n 2^{n/2+1}}{64 t^{n/2+3}} e^{-(n+2)} \rho_f \int_{B^n_{\rho_f /2} \( i_f \)}  f(y)  dy
\end{align*}
for any $(\xi ,\nu ) \in \rN CS(f)$ and $x \in A_f ( 0, \sqrt{(2n+4)t}-d_f ) \cap ( \Pos \{ \nu \} + \xi )$.
Here, for the first inequality, we used the fact that if $t \geq (n+2)^2/4$, then, for any $ 0 \leq r \leq \sqrt{(2n+4)t}$, we have
\[
-\frac{n+2}{t} + \frac{1}{2} \( \frac{r}{t} \)^2  +\frac{n+5}{8} \( \frac{r}{t} \)^4 -\frac{3(n+2)(n+4)r^2}{8t^3} + \frac{n(n+2)(n+4)}{4t^2}
\leq - \frac{n+2}{t^2} ,
\]
and equality holds if $r=\sqrt{(2n+4)t}$. 
The second inequality is usual. The third inequality follows from $r \leq \sqrt{(2n+4)t}$. 
The last inequality follows from Remark \ref{inscribed_ball} and $\nu \cdot (x-y) \geq \rho_f /2$. 

By Lemma \ref{error}, if $t$ is large enough, then we have
\[
\lvert \mathcal{E} \( \nu \cdot \nabla u(x,t) \) \rvert 
\leq C(n) e^{-t/2} (1+t)^{n+1} \| f \|_{W^{(n+2)/2,\infty}} 
<\frac{(n+2) \ga_n 2^{n/2+1}}{128 t^{n/2+3}} e^{-(n+2)} \rho_f \int_{B^n_{\rho_f /2} \( i_f \)}  f(y)  dy
\]
for any $(\xi ,\nu ) \in \rN CS(f)$ and $x \in A_f ( 0, \sqrt{(2n+4)t}-d_f ) \cap ( \Pos \{ \nu \} + \xi )$.

Hence if $t$ is large enough, then we obtain
\[
\nu \cdot \nabla u(x,t) 
> \frac{(n+2) \ga_n 2^{n/2+1}}{128 t^{n/2+3}} e^{-(n+2)} \rho_f \int_{B^n_{\rho_f /2} \( i_f \)}  f(y)  dy
>0 
\]
for any $(\xi ,\nu ) \in \rN CS(f)$ and $x \in A_f ( 0, \sqrt{(2n+4)t}-d_f ) \cap ( \Pos \{ \nu \} + \xi )$.
\end{proof}

\begin{lem}
\label{lem_negativity_crit}
Let $f$ be as in Notation \ref{notation_main}.
There exists a constant $T >0$ such that if $t\geq T$, then, for any $(\xi ,\nu ) \in \rN CS(f)$ and $x \in A_f ( \sqrt{(2n+4)t} , \sqrt{(2n+5)t} )  \cap ( \Pos \{ \nu \} + \xi )$, we have $\nu \cdot \nabla u (x,t) <0$. 
\end{lem}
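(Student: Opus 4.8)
The plan is to follow the scheme of Proposition~\ref{prop_positivity_crit} and simply reverse the sign of every estimate: decompose $\nu\cdot\nabla u(x,t)=\mathcal{P}(\nu\cdot\nabla u(x,t))+\mathcal{E}(\nu\cdot\nabla u(x,t))$, and treat the even-dimensional case, the odd one being identical. By Lemma~\ref{error} the error term is $O(e^{-t/2}(1+t)^{n+1})$, hence exponentially small, so it suffices to produce a negative upper bound, of polynomial order in $1/t$, for
\[
\mathcal{P}(\nu\cdot\nabla u(x,t))=-\frac{\ga_n}{16}e^{-t/2}\int_{B_t^n(x)}\tilde k_{\frac{n+2}{2}}(r,t)\,f(y)\,\nu\cdot(x-y)\,dy .
\]
First I would fix $T$ large enough that $t\ge\sqrt{(2n+5)t}+d_f$, so that $B_t^n(x)$ contains $CS(f)$ and the integral is effectively over $\supp f$, for every $x\in A_f(\sqrt{(2n+4)t},\sqrt{(2n+5)t})$.

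Next I would record the geometry. For $(\xi,\nu)\in\rN CS(f)$ and $x\in A_f(\sqrt{(2n+4)t},\sqrt{(2n+5)t})\cap(\Pos\{\nu\}+\xi)$, the supporting-hyperplane condition in the definition of $\rN CS(f)$ together with $x\in\Pos\{\nu\}+\xi$ force $\xi$ to be the point of $CS(f)$ nearest to $x$, so $s:=|x-\xi|=\dist(x,CS(f))\in[\sqrt{(2n+4)t},\sqrt{(2n+5)t}]$ and $x=\xi+s\nu$. Hence for every $y\in\supp f$ one has $\nu\cdot(x-y)=s+\nu\cdot(\xi-y)\ge s\ge\sqrt{(2n+4)t}>0$ and $r=|x-y|\ge s$, so $r^2\ge(2n+4)t$, while also $r\le s+d_f\le\sqrt{(2n+6)t}$ for $t\ge T$. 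Furthermore, choosing an incenter $i_f$ of $\supp f$, from $i_f+\rho_f\nu\in B_{\rho_f}^n(i_f)\subset\supp f\subset CS(f)\subset\nu^\perp_-+\xi$ one gets $\nu\cdot(\xi-i_f)\ge\rho_f$, whence $\nu\cdot(\xi-y)\ge\rho_f/2$, and therefore $r^2\ge s^2+s\rho_f\ge(2n+4)t+\rho_f\sqrt{(2n+4)t}$, for every $y\in B_{\rho_f/2}^n(i_f)$.

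Then I would expand the kernel. By Lemma~\ref{expansion_sqrt} with $\ell=\frac{n+2}{2}$, the $1/t$-order part $-\frac{n+2}{t}+\frac{1}{2}(r/t)^2$ of its bracket equals $\frac{r^2-(2n+4)t}{2t^2}$ and the remaining terms are $O(1/t^2)$ uniformly for $r^2\in[(2n+4)t,(2n+6)t]$, so that
\[
e^{-t/2}\tilde k_{\frac{n+2}{2}}(r,t)=\frac{2^{(n+2)/2}}{t^{(n+2)/2}}\exp\( -\frac{r^2}{2\(t+\sqrt{t^2-r^2}\)} \)\( \frac{r^2-(2n+4)t}{2t^2}+O\( \frac{1}{t^2} \) \).
\]
From this I extract, for $t\ge T$: (i) on all of $\supp f$ the bracket is $\ge-C(n)/t^2$ since $r^2-(2n+4)t\ge0$, hence $e^{-t/2}\tilde k_{\frac{n+2}{2}}(r,t)\ge-C(n)/t^{(n+6)/2}$; and (ii) on $B_{\rho_f/2}^n(i_f)$ the bracket is $\ge\frac{\rho_f\sqrt{2n+4}}{2t^{3/2}}-\frac{C(n)}{t^2}\ge\frac{\rho_f\sqrt{2n+4}}{4t^{3/2}}$ and the exponential factor is $\ge e^{-(n+3)}$, hence $e^{-t/2}\tilde k_{\frac{n+2}{2}}(r,t)\ge c(n)\rho_f/t^{(n+5)/2}$. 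Splitting $\int_{B_t^n(x)}=\int_{\supp f\setminus B_{\rho_f/2}^n(i_f)}+\int_{B_{\rho_f/2}^n(i_f)}$, and using $0<\nu\cdot(x-y)\le\sqrt{(2n+6)t}$ with (i) on the first piece and $\nu\cdot(x-y)\ge\sqrt{(2n+4)t}$ with (ii) on the second, the whole integral is at least $\frac{c'(n)\rho_f}{t^{(n+4)/2}}\int_{B_{\rho_f/2}^n(i_f)}f(y)\,dy-\frac{C'(n)}{t^{(n+5)/2}}\|f\|_1$. Since $\int_{B_{\rho_f/2}^n(i_f)}f\,dy>0$ and $t^{-(n+4)/2}$ dominates $t^{-(n+5)/2}$, this is a positive quantity of order $t^{-(n+4)/2}$ for $t\ge T$; hence $\mathcal{P}(\nu\cdot\nabla u(x,t))<0$ of that order, which overwhelms the exponentially small $|\mathcal{E}(\nu\cdot\nabla u(x,t))|$, and $\nu\cdot\nabla u(x,t)<0$ follows.

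I expect the main obstacle to be the behaviour of $\tilde k_{\frac{n+2}{2}}(r,t)$ near $r=\sqrt{(2n+4)t}$: unlike in Proposition~\ref{prop_positivity_crit}, where the relevant kernel keeps a single sign throughout $r\le\sqrt{(2n+4)t}$, here it changes sign essentially at $r=\sqrt{(2n+4)t}$, so for the worst points $x$ (those with $\dist(x,CS(f))$ close to $\sqrt{(2n+4)t}$) the integrand is slightly negative on the part of $\supp f$ nearest to $\xi$. One cannot simply discard that part as in Proposition~\ref{prop_positivity_crit}; the content of fact~(i) and of the final estimate is precisely that its negative contribution is of strictly lower order in $1/t$ than the positive one coming from the incenter ball $B_{\rho_f/2}^n(i_f)$.
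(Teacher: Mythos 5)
Your argument is correct and is essentially the paper's own proof: the same kernel expansion from Lemma \ref{expansion_sqrt}, the same observation that the leading bracket $-\frac{n+2}{t}+\frac{1}{2}(r/t)^2$ is nonnegative for $r\geq\sqrt{(2n+4)t}$ so the wrong-sign contribution is only $O(t^{-(n+5)/2})\|f\|_1$, and the same dominant $O(t^{-(n+4)/2})$ good contribution extracted from the incenter ball via the depth estimate $\nu\cdot(\xi-y)\geq\rho_f/2$ (Remark \ref{inscribed_ball}). The only cosmetic difference is that you split $\supp f$ into $B^n_{\rho_f/2}(i_f)$ and its complement, whereas the paper splits by the half-spaces through $\xi'=\xi-(\rho_f/2)\nu$ and then restricts to the incenter ball at the last step.
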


\begin{proof}
We give a proof for the even dimensional case. The argument works for the odd dimensional case.

We take a large enough $T$ such that if $t \geq T$, then we have $t \geq \sqrt{(2n+5)t} +d_f$, which implies that, for any $x \in CS (f) + \sqrt{(2n+5)t}B^n$, the ball $B^n_t(x)$ contains $CS(f)$. 
For every $(\xi ,\nu ) \in \rN CS(f)$, put $\xi' = \xi -(\rho_f /2)\nu$. 
Let $i_f$ be an incenter of the support of $f$. We decompose the principal term of $\nu \cdot \nabla u$ as 
\begin{align*}
\mathcal{P} \( \nu \cdot u(x,t) \) 
&=-\frac{\ga_n}{16}e^{-t/2} \( \int_{\supp f \cap \( \nu^\perp_+ + \xi' \)} + \int_{\supp f \cap \( \nu^\perp_- + \xi' \)} \) \tilde{k}_{\frac{n+2}{2}} (r,t) f(y) \nu \cdot (x-y) dy \\
&=: P_1' + P_2' .
\end{align*}

By Lemma \ref{expansion_sqrt}, if $t$ is large enough, then we have
\begin{align*}
P_1'
&=-\frac{\ga_n 2^{n/2+1}}{16 t^{n/2+1}} \int_{\supp f  \cap \( \nu^\perp_+ + \xi' \)} \exp \( -\frac{r^2}{2 \( t+\sqrt{t^2 -r^2} \)} \) \( -\frac{n+2}{t} + \frac{1}{2} \( \frac{r}{t} \)^2   \right. \\ 
&\quad \left. +\frac{n+5}{8} \( \frac{r}{t} \)^4 -\frac{3(n+2)(n+4)r^2}{8t^3} + \frac{n(n+2)(n+4)}{4t^2} +  O\( \frac{1}{t^3} \) \)   f(y) \nu \cdot \( x-y \) dy \\
&< \frac{(n+2) \ga_n 2^{n/2+1}}{16 t^{n/2+3}} \int_{\supp f  \cap \( \nu^\perp_+ + \xi' \)} \exp \( -\frac{r^2}{2 \( t+\sqrt{t^2 -r^2} \)} \)  \( 1 +  O\( \frac{1}{t} \) \)   f(y) \nu \cdot \( x-y \) dy\\
&< \frac{(n+2) \ga_n 2^{n/2+1}}{8 t^{n/2+3}} \int_{\supp f  \cap \( \nu^\perp_+ + \xi' \)} \exp \( -\frac{r^2}{2 \( t+\sqrt{t^2 -r^2} \)} \)    f(y) \nu \cdot \( x-y \) dy \\
&< \frac{(n+2) \ga_n 2^{n/2+1}}{8 t^{n/2+3}} e^{-(n+2)/2} \int_{\supp f  \cap \( \nu^\perp_+ + \xi' \)}   f(y) \nu \cdot \( x-y \) dy \\
&< \frac{(n+2) \sqrt{2n+6} \ga_n 2^{n/2+1}}{8 t^{n/2+5/2}} e^{-(n+2)/2} \| f \|_1
\end{align*}
for any $(\xi ,\nu ) \in \rN CS(f)$ and $x \in A_f ( \sqrt{(2n+4)t} , \sqrt{(2n+5)t} )  \cap ( \Pos \{ \nu \} + \xi )$. 
Here, for the first inequality, we used the fact that, for any $r \geq \sqrt{(2n+4)t}$, we have
\[
-\frac{n+2}{t} + \frac{1}{2} \( \frac{r}{t} \)^2  +\frac{n+5}{8} \( \frac{r}{t} \)^4 -\frac{3(n+2)(n+4)r^2}{8t^3} + \frac{n(n+2)(n+4)}{4t^2}
\geq -\frac{n+2}{t^2} ,
\]
and equality holds if $r= \sqrt{(2n+4)t}$. 
The second inequality is usual. 
The third inequality follows from $r \geq \sqrt{(2n+4)t}$.
For the last inequality, we took a large enough $T$ such that if $t \geq T$, then we have
\[
\nu \cdot (x-y) 
\leq r 
\leq \sqrt{(2n+5)t} +d_f 
\leq \sqrt{(2n+6)t} .
\]

By Lemma \ref{expansion_sqrt}, if $t$ is large enough, then we have
\begin{align*}
P_2'
&=- \frac{\ga_n 2^{n/2+1}}{16t^{n/2+1}} \int_{\supp f \cap \( \nu^\perp_- +\xi' \)} \exp \( -\frac{r^2}{2\( t+ \sqrt{t^2 -r^2} \)} \) \\
&\quad \times  \( -\frac{n+2}{t} + \frac{1}{2} \( \frac{r}{t} \)^2  + O \( \frac{1}{t^2} \) \) f(y) \nu \cdot (x-y) dy \\
&< -\frac{\sqrt{2n+4}\ga_n 2^{n/2+1}}{32 t^{n/2+5/2}} \rho_f \int_{\supp f \cap \( \nu^\perp_- +\xi' \)} \exp \( -\frac{r^2}{2\( t+ \sqrt{t^2 -r^2} \)} \)  \( 1 + O \( \frac{1}{\sqrt{t}} \) \) f(y) \nu \cdot (x-y)  dy \\
&< -\frac{\sqrt{2n+4}\ga_n 2^{n/2+1}}{64 t^{n/2+5/2}} \rho_f \int_{\supp f \cap \( \nu^\perp_- +\xi' \)} \exp \( -\frac{r^2}{2\( t+ \sqrt{t^2 -r^2} \)} \)  f(y) \nu \cdot (x-y)  dy \\
&< -\frac{\sqrt{2n+4}\ga_n 2^{n/2+1}}{64 t^{n/2+5/2}}  e^{-(n+3)} \rho_f \int_{\supp f \cap \( \nu^\perp_- +\xi' \)} f(y) \nu \cdot (x-y)  dy \\
&< -\frac{(n+2) \ga_n 2^{n/2+1}}{32 t^{n/2+2}} e^{-(n+3)} \rho_f \int_{B^n_{\rho_f /2} \( i_f \)}  f(y)  dy 
\end{align*}
for any  $(\xi ,\nu ) \in \rN CS(f)$ and $x \in A_f ( \sqrt{(2n+4)t} , \sqrt{(2n+5)t} )  \cap ( \Pos \{ \nu \} + \xi )$. 
Here, the first inequality follows from $r \geq \sqrt{(2n+4)t}+\rho_f/2$. 
The second inequality is usual. 
The third inequality follows from
\[ 
r \leq \sqrt{(2n+5)t} +d_f \leq \sqrt{(2n+6)t} ,
\]
which has been assumed in the last one for $P_1'$. 
The last inequality follows from $\nu \cdot (x-y) \geq \sqrt{(2n+4)t}$. 
We also used Remark \ref{inscribed_ball} in the last inequality.

Therefore, if $t$ is large enough, then we obtain
\begin{align*}
\mathcal{P} \( \nu \cdot \nabla  u(x,t) \)  
&< \frac{(n+2) \sqrt{2n+6} \ga_n 2^{n/2+1}}{8 t^{n/2+5/2}} e^{-(n+2)/2} \| f \|_1  \\
&\quad -\frac{(n+2) \ga_n 2^{n/2+1}}{32 t^{n/2+2}} e^{-(n+3)} \rho_f \int_{B^n_{\rho_f /2} \( i_f \)}  f(y)  dy   \\
&<-\frac{(n+2) \ga_n 2^{n/2+1}}{64 t^{n/2+2}} e^{-(n+3)} \rho_f \int_{B^n_{\rho_f /2} \( i_f \)}  f(y)  dy
\end{align*}
for any  $(\xi ,\nu ) \in \rN CS(f)$ and $x \in A_f ( \sqrt{(2n+4)t} , \sqrt{(2n+5)t} )  \cap ( \Pos \{ \nu \} + \xi )$.

By Lemma \ref{error}, if $t$ is large enough, then we have
\[
\lvert \mathcal{E} \( \nu \cdot \nabla u(x,t) \) \rvert
\leq C(n) e^{-t/2} (1+t)^{n+1} \| f \|_{W^{(n+2)/2, \infty}} 
< \frac{(n+2) \ga_n 2^{n/2+1}}{128 t^{n/2+2}} e^{-(n+3)}  \rho_f \int_{B^n_{\rho_f /2} \( i_f \)}  f(y)  dy
\]
for any  $(\xi ,\nu ) \in \rN CS(f)$ and $x \in A_f ( \sqrt{(2n+4)t} , \sqrt{(2n+5)t} )  \cap ( \Pos \{ \nu \} + \xi )$.

Hence if $t$ is large enough, then we obtain 
\[
\nu \cdot \nabla u(x,t) 
<-\frac{(n+2) \ga_n 2^{n/2+1}}{128 t^{n/2+2}} e^{-(n+3)} \rho_f \int_{B^n_{\rho_f /2} \( i_f \)}  f(y)  dy 
\]
for any  $(\xi ,\nu ) \in \rN CS(f)$ and $x \in A_f ( \sqrt{(2n+4)t} , \sqrt{(2n+5)t} )  \cap ( \Pos \{ \nu \} + \xi )$.
\end{proof}

\begin{prop}
\label{prop_negativity_crit}
Let $f$ be as in Notation \ref{notation_main}.
Let $\psi (t)$ be of small order of  $t$ as $t$ goes to infinity with $\sqrt{(2n+5)t} \leq \psi (t) \leq t$ for any $t>0$. There exists a constant $T >0$ such that if $t\geq T$, then, for any $(\xi ,\nu ) \in \rN CS(f)$ and $x \in A_f ( \sqrt{(2n+4)t}, \psi (t) ) \cap ( \Pos \{ \nu \} + \xi )$, we have $\nu \cdot \nabla u (x,t) <0$. 
\end{prop}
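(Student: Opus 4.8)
The plan is to mimic, essentially verbatim, the proof of Proposition \ref{prop_positivity_null}: use Lemma \ref{lem_negativity_crit} as the base case covering the inner annulus $A_f(\sqrt{(2n+4)t},\sqrt{(2n+5)t})$, and then push the conclusion outward to the whole annulus $A_f(\sqrt{(2n+4)t},\psi(t))$ by controlling the principal term of $\nu\cdot\nabla u$ on $A_f(\sqrt{(2n+5)t},\psi(t))$ via the small-order expansion of Lemma \ref{expansion_smallo}. As in the other proofs of this section, I would carry out the even dimensional case; the odd dimensional case is the same with the obvious modifications.

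First I would fix $T$ so large that $t\geq\psi(t)+d_f$ whenever $t\geq T$, which is possible since $\psi(t)=o(t)$; this guarantees $B^n_t(x)\supset CS(f)$ for every $x\in CS(f)+\psi(t)B^n$, so that the integral representation
\[
\mathcal{P}\( \nu\cdot\nabla u(x,t) \)=-\frac{\ga_n}{16}\,e^{-t/2}\int_{B^n_t(x)}\tilde{k}_{\frac{n+2}{2}}(r,t)\,f(y)\,\nu\cdot(x-y)\,dy
\]
from Lemma \ref{derivative} runs over all of $\supp f$. By Lemma \ref{lem_negativity_crit} it then remains only to treat $x\in A_f(\sqrt{(2n+5)t},\psi(t))\cap(\Pos\{\nu\}+\xi)$.

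On this outer annulus the situation is in fact cleaner than in Lemma \ref{lem_negativity_crit}, so no decomposition of the integral into a ``near'' and a ``far'' part is needed. Indeed, for every $y\in\supp f\subset CS(f)$ one has $r=|x-y|\geq\dist(x,CS(f))\geq\sqrt{(2n+5)t}$ together with $r\leq\psi(t)+d_f$, while $\nu\cdot(x-y)\geq\dist(x,\pd CS(f))\geq\sqrt{(2n+5)t}>0$ since $(\xi,\nu)\in\rN CS(f)$ and $x\in\Pos\{\nu\}+\xi$. I would then feed Lemma \ref{expansion_smallo} with $\ell=(n+2)/2$ (so $2\ell=n+2$) into the displayed formula, split $\tfrac12(r/t)^2=\tfrac{2n+4}{2n+5}\cdot\tfrac12(r/t)^2+\tfrac{1}{2n+5}\cdot\tfrac12(r/t)^2$, and use $r^2\geq(2n+5)t$ to cancel $-\tfrac{n+2}{t}$ against the first summand; since the remaining terms $O((r/t)^4)+O(r^2/t^3)+O(1/t^2)$ are all $o((r/t)^2)$ on the range $\sqrt{(2n+5)t}\leq r\leq\psi(t)+d_f$, this shows that the bracket in the expansion of $\tilde{k}_{(n+2)/2}(r,t)$ is $\geq\tfrac{1}{4(2n+5)}(r/t)^2>0$, hence $\tilde{k}_{(n+2)/2}(r,t)>0$ throughout the domain of integration. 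Consequently the integrand in the displayed formula is pointwise nonnegative, so $\mathcal{P}(\nu\cdot\nabla u(x,t))<0$; bounding the integrand from below by means of $r\geq\sqrt{(2n+5)t}$, $\nu\cdot(x-y)\geq\sqrt{(2n+5)t}$ and $\exp(-r^2/(2(t+\sqrt{t^2-r^2})))\geq\exp(-(\psi(t)+d_f)^2/(2t))$ I would obtain
\[
\mathcal{P}\( \nu\cdot\nabla u(x,t) \)<-\frac{C(n)}{t^{(n+3)/2}}\,\exp\( -\frac{(\psi(t)+d_f)^2}{2t} \)\,\| f \|_1
\]
for some positive constant $C(n)$.

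Finally I would absorb the error term: Lemma \ref{error} gives $|\mathcal{E}(\nu\cdot\nabla u(x,t))|\leq C(n)e^{-t/2}(1+t)^{n+1}\|f\|_{W^{(n+2)/2,\infty}}$, and since $(\psi(t)+d_f)^2/(2t)=o(t)$ the exponent $-t/2+(\psi(t)+d_f)^2/(2t)$ tends to $-\infty$, so this error is, for $t$ large enough, smaller in modulus than half of the right-hand side of the last display. Hence $\nu\cdot\nabla u(x,t)=\mathcal{P}(\nu\cdot\nabla u(x,t))+\mathcal{E}(\nu\cdot\nabla u(x,t))<0$ for $t\geq T$, and combining this with Lemma \ref{lem_negativity_crit} finishes the proof. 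The one point deserving attention — exactly as in Proposition \ref{prop_positivity_null} — is the uniformity over $\sqrt{(2n+5)t}\leq r\leq\psi(t)+d_f$ of the $O$-constants appearing in Lemma \ref{expansion_smallo}, i.e.\ that the expansion is valid not just at the endpoint $r=\psi(t)$ but throughout the annulus; this is precisely why $\psi(t)=o(t)$ is the natural hypothesis, and it is the only step that is not purely mechanical.
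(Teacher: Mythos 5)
Your proposal is correct and follows essentially the same route as the paper: reduce to the outer annulus $A_f(\sqrt{(2n+5)t},\psi(t))$ via Lemma \ref{lem_negativity_crit}, apply Lemma \ref{expansion_smallo} with the same splitting $\tfrac12(r/t)^2=\tfrac{2n+4}{2n+5}\cdot\tfrac12(r/t)^2+\tfrac{1}{2n+5}\cdot\tfrac12(r/t)^2$ to cancel $-\tfrac{n+2}{t}$, bound the principal term below by a multiple of $t^{-(n+3)/2}\exp(-(\psi(t)+d_f)^2/(2t))\|f\|_1$, and absorb the exponentially small error term. The paper's proof is the same argument with the constant made explicit.
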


\begin{proof}
We give a proof for the even dimensional case. The argument works for the odd dimensional case.

We take a large enough $T$ such that  if $t\geq T$, then we have $t \geq \psi (t) +d_f$, which implies that, for any $x \in CS(f) + \psi (t) B^n$, the ball $B^n_t(x)$ contains $CS(f)$. 
By Lemma \ref{lem_negativity_crit}, we show the existence of a constant $T >0$ such that if $t \geq T$, then, for any $(\xi ,\nu ) \in \rN CS(f)$ and $x \in A_f ( \sqrt{(2n+5)t}, \psi (t) ) \cap ( \Pos \{ \nu \} +\xi )$, we have $\nu \cdot \nabla u(x,t) <0$. 

By Lemmas \ref{derivative} and \ref{expansion_smallo}, if $t$ is large enough, then we have
\begin{align*}
&\mathcal{P} \( \nu \cdot \nabla u(x,t) \) \\
&=-\frac{\ga_n 2^{n/2+1}}{16 t^{n/2+1}} \int_{B^n_t(x)} \exp \( -\frac{r^2}{2\( t +\sqrt{t^2 -r^2} \)} \) \\ 
&\quad \times \( -\frac{n+2}{t} + \( \frac{2n+4}{2n+5} +\frac{1}{2n+5} \) \frac{1}{2} \( \frac{r}{t} \)^2 
+ O \( \( \frac{r}{t} \)^4 \) + O\( \frac{r^2}{t^3} \) + O \( \frac{1}{t^2} \) \) f(y) \nu \cdot (x-y) dy \\
&< -\frac{\ga_n 2^{n/2+1}}{16 t^{n/2+1}} \int_{B^n_t(x)} \exp \( -\frac{r^2}{2\( t +\sqrt{t^2 -r^2} \)} \)  \\
&\quad \times \( \frac{1}{2(2n+5)}  \( \frac{r}{t} \)^2   + O \( \( \frac{r}{t} \)^4 \) + O\( \frac{r^2}{t^3} \) + O \( \frac{1}{t^2} \) \) f(y) \nu \cdot (x-y) dy \\
&<-\frac{\ga_n 2^{n/2+1}}{16 t^{n/2+1}} \int_{B^n_t(x)} \exp \( -\frac{r^2}{2\( t +\sqrt{t^2 -r^2} \)} \)   \frac{1}{4(2n+5)}  \( \frac{r}{t} \)^2   f(y) \nu \cdot (x-y) dy \\
&< -\frac{\ga_n 2^{n/2+1}}{64 t^{n/2+2}} \int_{B^n_t(x)} \exp \( -\frac{r^2}{2\( t +\sqrt{t^2 -r^2} \)} \)    f(y) \nu \cdot (x-y) dy \\
&< - \frac{\sqrt{2n+5} \ga_n 2^{n/2+1}}{64 t^{n/2+3/2}} \exp \( -\frac{\( \psi (t)+d_f \)^2}{2t} \)  \| f \|_1 
\end{align*}
for any $(\xi ,\nu ) \in \rN CS(f)$ and $x \in A_f ( \sqrt{(2n+5)t}, \psi (t) ) \cap ( \Pos \{ \nu \} +\xi )$.
Here, the first and third inequalities follow from $r \geq \sqrt{(2n+5)t}$. 
For the second inequality, we took a large enough $T$ such that if $t\geq T$, then, for any $\sqrt{(2n+5)t} \leq r \leq \psi (t) +d_f$, we have
\[
\frac{1}{2(2n+5)}  \( \frac{r}{t} \)^2  + O \( \( \frac{r}{t} \)^4 \) +O\( \frac{r^2}{t^3} \) + O \( \frac{1}{t^2} \) 
\geq \frac{1}{4(2n+5)}  \( \frac{r}{t} \)^2 .
\]
The last inequality follows from 
\[
\sqrt{(2n+5)t} \leq \nu \cdot (x-y) \leq r \leq \psi (t) +d_f.
\]

By Lemma \ref{error}, if $t$ is large enough, then we have
\[
\lvert \mathcal{E} \( \nu \cdot u(x,t) \) \rvert
\leq C(n) e^{-t/2} (1+t)^{n+1} \| f \|_{W^{(n+2)/2, \infty}} 
< \frac{\sqrt{2n+5} \ga_n 2^{n/2+1}}{128 t^{n/2+3/2}} \exp \( -\frac{\( \psi (t)+d_f \)^2}{2t} \)  \| f \|_1
\]
for any $(\xi ,\nu ) \in \rN CS(f)$ and $x \in A_f ( \sqrt{(2n+5)t}, \psi (t) ) \cap ( \Pos \{ \nu \} +\xi )$.

Hence if $t$ is large enough, then we obtain 
\[
\nu \cdot \nabla u(x,t) 
<- \frac{\sqrt{2n+5} \ga_n 2^{n/2+1}}{128 t^{n/2+3/2}} \exp \( -\frac{\( \psi (t)+d_f \)^2}{2t} \)  \| f \|_1 
<0
\]
for any $(\xi ,\nu ) \in \rN CS(f)$ and $x \in A_f ( \sqrt{(2n+5)t}, \psi (t) ) \cap ( \Pos \{ \nu \} +\xi )$.
\end{proof}

\begin{prop}
\label{prop_concavity_crit}
Let $f$ be as in Notation \ref{notation_main}.
There exists a constant $T \geq d_f^2 / (2n+4)$ such that if $t\geq T$, then, for any $(\xi ,\nu ) \in \rN CS(f)$ and $x \in A_f (\sqrt{(2n+4)t} -d_f ,\sqrt{(2n+4)t} ) \cap ( \Pos \{ \nu \} + \xi )$, we have $( \nu \cdot \nabla )^2 u(x,t)  <0$. 
\end{prop}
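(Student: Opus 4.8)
I would run the same three–part argument as in Propositions~\ref{prop_negativity_null} and \ref{prop_positivity_crit}: split $(\nu\cdot\nabla)^2u(x,t)=\mathcal{P}\big((\nu\cdot\nabla)^2u(x,t)\big)+\mathcal{E}\big((\nu\cdot\nabla)^2u(x,t)\big)$, show that the principal term is strictly negative with $|\mathcal{P}|$ bounded below by an inverse power of $t$, and then use Lemma~\ref{error} to swallow the exponentially small error term. I would write the proof in the even dimensional case, the odd case being identical after replacing $(n+2)/2,(n+4)/2$ by $(n+1)/2,(n+3)/2$ and using the corresponding half of Lemma~\ref{expansion_sqrt}. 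First I would choose $T\ge d_f^2/(2n+4)$ (so that the annulus is genuine) and large enough that $\sqrt{(2n+4)t}+d_f\le t$; then $B_t^n(x)\supset CS(f)\supset\supp f$ for every admissible $x$, so every integral below is over $\supp f$ and $\int_{B_t^n(x)}f\,dy=\|f\|_1$.

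Next comes the geometric set-up. For $(\xi,\nu)\in\rN CS(f)$ and $x\in A_f(\sqrt{(2n+4)t}-d_f,\sqrt{(2n+4)t})\cap(\Pos\{\nu\}+\xi)$, convexity and the defining property of $(\xi,\nu)$ give $x=\xi+\rho\nu$ with $\rho=\dist(x,CS(f))\in[\sqrt{(2n+4)t}-d_f,\sqrt{(2n+4)t}]$, and $\nu\cdot(\xi-y)\in[0,d_f]$ for every $y\in\supp f$. Hence, writing $a=\nu\cdot(x-y)$ and $r=|x-y|$,
\[
\sqrt{(2n+4)t}-d_f\ \le\ a\ \le\ r\ \le\ \rho+|\xi-y|\ \le\ \sqrt{(2n+4)t}+d_f ,
\]
so both $a$ and $r$ are of order $\sqrt t$ and Lemma~\ref{expansion_sqrt} applies to $\tilde k_{\frac{n+4}{2}}(r,t)$ and $\tilde k_{\frac{n+2}{2}}(r,t)$, which are the two kernels occurring in the integrand $\tilde k_{\frac{n+4}{2}}(r,t)\,a^2-4\tilde k_{\frac{n+2}{2}}(r,t)$ of $\mathcal{P}\big((\nu\cdot\nabla)^2u(x,t)\big)$.

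The heart of the proof is then the algebra. After substituting the expansions of Lemma~\ref{expansion_sqrt} and factoring out the common positive quantity $t^{-(n+2)/2}2^{(n+2)/2}\exp\big(\tfrac12\sqrt{t^2-r^2}\big)$, the integrand becomes $\tfrac{2a^2}{t}\,p_{\frac{n+4}{2}}(r,t)-4\,p_{\frac{n+2}{2}}(r,t)$ plus a lower-order remainder, where $p_\ell$ denotes the degree-four (in $r/\sqrt t$) bracket of Lemma~\ref{expansion_sqrt}, whose leading term is $-2\ell/t$. The three computations I would carry out are: (i) $p_{\frac{n+2}{2}}(r,t)$ degenerates at the critical radius $r^2=(2n+4)t$ — in fact it equals $-(n+2)/t^2$ there — and throughout the region one has $p_{\frac{n+2}{2}}(r,t)\ge-C(n,d_f)t^{-3/2}$; (ii) $p_{\frac{n+4}{2}}(r,t)$ is negative in the region, with $p_{\frac{n+4}{2}}(r,t)\le-3/(2t)$ for $t$ large; (iii) $a^2\ge(2n+3)t$ for $t$ large. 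Using (ii) to replace $a^2$ by its lower bound from (iii), and then (i), the bracket is at most $-c_0(n)/t+O(t^{-3/2})\le-c_0(n)/(2t)$ for $t\ge T_1(n,d_f)$ with $c_0(n)>0$ (one checks that at $r^2=a^2=(2n+4)t$ this bracket equals $-8(n+2)/t$ up to lower order). Since $f\ge0$, $\supp f\subset B_t^n(x)$, and $\exp\big(\tfrac12(\sqrt{t^2-r^2}-t)\big)=\exp\big(-\tfrac{r^2}{2(t+\sqrt{t^2-r^2})}\big)$ is bounded below by a positive constant depending only on $n$ on the region, this yields
\[
\mathcal{P}\big((\nu\cdot\nabla)^2u(x,t)\big)\ \le\ -\,\frac{c_1(n)}{t^{(n+4)/2}}\,\|f\|_1\ <\ 0
\]
for $t$ large, with $c_1(n)>0$. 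Finally, Lemma~\ref{error} gives $\big|\mathcal{E}\big((\nu\cdot\nabla)^2u(x,t)\big)\big|\le C(n)e^{-t/2}(1+t)^{n+2}\|f\|_{W^{(n+4)/2,\infty}}$, which is $<c_1(n)\|f\|_1/(2t^{(n+4)/2})$ once $t$ is large; adding the two estimates gives $(\nu\cdot\nabla)^2u(x,t)<0$ uniformly in $(\xi,\nu)$ and $x$, and $T$ is the largest of the finitely many thresholds used above.

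I expect the main obstacle to be step (i)--(ii), i.e.\ verifying the coefficient identities that make $\tfrac{2a^2}{t}p_{\frac{n+4}{2}}-4p_{\frac{n+2}{2}}$ collapse to the clean negative leading term $-8(n+2)/t$ at the critical radius — the same kind of polynomial miracle already used in Propositions~\ref{prop_negativity_null}, \ref{prop_positivity_crit} and \ref{prop_negativity_crit}, but now with the extra bookkeeping nuisance that the integrand depends on the two quantities $a=\nu\cdot(x-y)$ and $r=|x-y|$ separately rather than on a single radial variable. That separation is precisely what steps (ii)--(iii) are designed to handle: because the coefficient $p_{\frac{n+4}{2}}$ multiplying $a^2$ is negative, $a^2$ may be estimated from below and $r$ from above independently.
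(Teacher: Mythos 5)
Your proposal is correct and follows essentially the same route as the paper: decompose into principal and error terms, insert the Lemma \ref{expansion_sqrt} expansions of $\tilde k_{\frac{n+4}{2}}$ and $\tilde k_{\frac{n+2}{2}}$ on the annulus where $\nu\cdot(x-y)$ and $r$ both lie within $d_f$ of $\sqrt{(2n+4)t}$, verify that the resulting bracket has a strictly negative leading coefficient of order $1/t$ (your value $-8(n+2)/t$ at the critical radius matches the paper's, which works with one quarter of your bracket and bounds it by $-(3n+2)/(4t)$ over the slightly wider range $[\sqrt{(2n+7/2)t},\sqrt{(2n+5)t}]$), and absorb the exponentially small error via Lemma \ref{error}. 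The only cosmetic difference is that you keep the two kernel factors separate and use the negativity of $p_{\frac{n+4}{2}}$ to decouple the bounds on $a$ and $r$, whereas the paper bounds the combined polynomial directly; both yield the same $-C(n)t^{-(n/2+2)}\|f\|_1$ estimate.
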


\begin{proof}
We give a proof for the even dimensional case. The argument works for the odd dimensional case.

We take a large enough $T$ such that if $t\geq T$, then we have $t \geq \sqrt{(2n+4)t} +d_f$, which implies that, for any $x \in CS(f) + \sqrt{(2n+4)t} B^n$, the ball $B^n_t(x)$ contains $CS(f)$.

By Lemmas \ref{derivative} and \ref{expansion_sqrt}, if $t$ is large enough, then we have
\begin{align*}
\mathcal{P} \( \( \nu \cdot \nabla \)^2 u (x,t) \) 
&=\frac{\ga_n 2^{n/2+3}}{64t^{n/2+1}} \int_{B^n_t (x)} \exp \( -\frac{r^2}{2 \( t+ \sqrt{t^2 -r^2} \)} \) \\
&\quad \times \( \frac{n+2}{t} -\frac{r^2 + (n+4) \( \nu \cdot (x-y) \)^2}{2t^2} + \frac{r^2 \( \nu \cdot (x-y) \)^2}{4t^3} + O \( \frac{1}{t^2} \)  \)  f(y) dy \\
&<-\frac{(3n+2) \ga_n 2^{n/2+3}}{256t^{n/2+2}} \int_{B^n_t (x)} \exp \( -\frac{r^2}{2 \( t+ \sqrt{t^2 -r^2} \)} \) \( 1+ O \( \frac{1}{t} \) \)  f(y) dy\\
&<-\frac{(3n+2) \ga_n 2^{n/2+3}}{512t^{n/2+2}} \int_{B^n_t (x)} \exp \( -\frac{r^2}{2 \( t+ \sqrt{t^2 -r^2} \)}  \)  f(y) dy\\
&<-\frac{(3n+2) \ga_n 2^{n/2+3}}{512t^{n/2+2}} \exp \( -\frac{2n+5}{2} \) \| f \|_1 
\end{align*}
for any $(\xi ,\nu ) \in \rN CS(f)$ and $x \in A_f (\sqrt{(2n+4)t} -d_f ,\sqrt{(2n+4)t} ) \cap ( \Pos \{ \nu \} + \xi )$. 
Here, for the first and last inequalities, we took a large enough $T$ such that if $t\geq T$, then we have
\[
\sqrt{\( 2n+\frac{7}{2} \)t}
\leq \sqrt{(2n+4)t} -d_f
\leq \nu \cdot (x-y)
\leq r
\leq \sqrt{(2n+4)t} +d_f 
\leq \sqrt{(2n+5)t} .
\]
The second inequality is usual.

By Lemma \ref{error}, if $t$ is large enough, then we have
\[
\lvert \mathcal{E} \( \( \nu \cdot \nabla \)^2 u(x,t) \) \rvert
\leq C(n) e^{-t/2} (1+t)^{n+2} \| f \|_{W^{(n+4)/2, \infty}} 
< \frac{(3n+2) \ga_n 2^{n/2+3}}{1024t^{n/2+2}} \exp \( -\frac{2n+5}{2} \) \| f \|_1 
\]
for any $(\xi ,\nu ) \in \rN CS(f)$ and $x \in A_f (\sqrt{(2n+4)t} -d_f ,\sqrt{(2n+4)t} ) \cap ( \Pos \{ \nu \} + \xi )$.

Hence if $t$ is large enough, then we obtain 
\[
\( \nu \cdot \nabla \)^2 u(x,t) 
< - \frac{(3n+2) \ga_n 2^{n/2+3}}{1024t^{n/2+2}} \exp \( -\frac{2n+5}{2} \) \| f \|_1 
<0
\]
for any $(\xi ,\nu ) \in \rN CS(f)$ and $x \in A_f (\sqrt{(2n+4)t} -d_f ,\sqrt{(2n+4)t} ) \cap ( \Pos \{ \nu \} + \xi )$.
\end{proof}

\begin{thm}
\label{crit1}
Let $f$ be as in Notation \ref{notation_main}.
Let $\psi (t)$ be of small order of $t$ as $t$ goes to infinity with $\sqrt{(2n+5)t} \leq \psi (t) \leq t$ for any $t>0$. 
There exists a constant $T \geq d_f^2 /(2n+4)$ such that if $t \geq T$, then we have
\[
\mathcal{C} \( u(\cdot ,t ) \) \cap \( CS(f) + \psi (t) B^n \) 
\subset A^\circ_f \( \sqrt{(2n+4)t} -d_f , \sqrt{(2n+4)t}  \) \cup CS(f) ,
\]
and the following statements hold:
\begin{enumerate}[$(1)$]
\item $u (\cdot ,t)$ has at least one critical point in $CS(f)$.  
\item If $n=1$, then the restriction of $u(\cdot ,t)$ to $A_f ( \sqrt{6t} -d_f , \sqrt{6t}  )$ has just two critical points. If $n \geq 2$, then the restriction of $u(\cdot ,t)$ to $A_f ( \sqrt{(2n+4)t} -d_f , \sqrt{(2n+4)t}  )$ has at least two critical points.
\end{enumerate}
\end{thm}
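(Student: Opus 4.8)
The plan is to first establish the asserted inclusion and then read off (1) and (2) from it together with the four preparatory results of this section. For the inclusion, note that every point $x$ of $A_f(0,\sqrt{(2n+4)t}-d_f)$ lies on a ray $\Pos\{\nu\}+\xi$ with $(\xi,\nu)\in\rN CS(f)$ — take $\xi$ the metric projection of $x$ onto $CS(f)$ and $\nu=(x-\xi)/|x-\xi|$, or any supporting normal at $x$ if $x\in\partial CS(f)$ — so Proposition \ref{prop_positivity_crit} gives $\nu\cdot\nabla u(x,t)>0$, hence $\nabla u(x,t)\neq0$ there; likewise Proposition \ref{prop_negativity_crit} rules out critical points on $A_f(\sqrt{(2n+4)t},\psi(t))$. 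Since $CS(f)+\psi(t)B^n$ is covered by $CS(f)$, by these two closed regions (which contain the bounding spheres $\{\dist(\cdot,CS(f))=\sqrt{(2n+4)t}-d_f\}$ and $\{\dist(\cdot,CS(f))=\sqrt{(2n+4)t}\}$), and by the open annulus $A^\circ_f(\sqrt{(2n+4)t}-d_f,\sqrt{(2n+4)t})$, the inclusion follows. For later use observe that, for $t$ large, $A_f(\sqrt{(2n+4)t}-d_f,\sqrt{(2n+4)t})\subset A_f(\sqrt{2nt},\psi(t))$, so $u(\cdot,t)>0$ on this whole closed annulus by Proposition \ref{prop_positivity_null}, while $u(\cdot,t)<0$ on $CS(f)$ by Proposition \ref{prop_negativity_null}. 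Statement (1) is then immediate: on the compact set $CS(f)+\psi(t)B^n$ the solution is negative somewhere in $CS(f)$ and positive on the outer boundary $\{\dist(\cdot,CS(f))=\psi(t)\}$ (Proposition \ref{prop_positivity_null}), so its minimum there is negative and attained at an interior point $x_0$, where $\nabla u(x_0,t)=0$; by the inclusion and the positivity of $u(\cdot,t)$ on the annulus, $x_0\in CS(f)$.

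For (2) I build the hypersurface on which the derivative in the outer normal directions of $CS(f)$ vanishes. By Proposition \ref{prop_positivity_crit}, Lemma \ref{lem_negativity_crit} and Proposition \ref{prop_concavity_crit}, for each $(\xi,\nu)\in\rN CS(f)$ the function $\rho\mapsto\nu\cdot\nabla u(\xi+\rho\nu,t)$ is strictly decreasing on $[\sqrt{(2n+4)t}-d_f,\sqrt{(2n+4)t}]$, positive at the left endpoint and negative at the right one, so it has a unique zero $\rho^*(\xi,\nu)$ in the open interval, at which $(\nu\cdot\nabla)^2u<0$. Exactly as in the proof of Theorem \ref{null}, the implicit function theorem and the correspondence of Remark \ref{correspondence} make $\Sigma:=\{\xi+\rho^*(\xi,\nu)\nu:(\xi,\nu)\in\rN CS(f)\}$ a compact hypersurface homeomorphic to $\partial CS(f)\cong S^{n-1}$, lying in the open annulus and equal to the set of points of that annulus at which the outer normal derivative vanishes. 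When $n=1$, $\Sigma$ is just two points, each a zero of $u'(\cdot,t)$ and hence a critical point, and uniqueness of $\rho^*$ on each of the two rays leaves no others in the annulus, which is the ``just two'' assertion.

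Now let $n\geq2$. Since $u(\cdot,t)>0$ on the closed annulus and, by Proposition \ref{prop_positivity_crit} and Lemma \ref{lem_negativity_crit}, its outer normal derivative is positive on the inner boundary and negative on the outer one, the maximum of $u(\cdot,t)$ over the closed annulus cannot be attained on the boundary; so it is attained at an interior point $p$, which is therefore a critical point, lies on $\Sigma$, and realizes $\max_{\Sigma}u(\cdot,t)$. Let $q\in\Sigma$ realize $\min_{\Sigma}u(\cdot,t)$; if $u(\cdot,t)$ is constant on $\Sigma$ then every point of $\Sigma$ is an interior maximum of $u(\cdot,t)$ over the closed annulus, hence a critical point, and we are done, so assume $q\neq p$. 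At $q$ the vector $\nabla u(q,t)$ is orthogonal to $T_q\Sigma$ (as $q$ is a critical point of $u(\cdot,t)|_\Sigma$) and to the ray direction $\nu(q)$ (by the defining property of $\Sigma$); provided $\nu(q)\notin T_q\Sigma$, the normal line of $\Sigma$ at $q$ is not contained in $\nu(q)^\perp$, forcing $\nabla u(q,t)=0$. Thus $p$ and $q$ are two critical points in the annulus.

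The hard part, and the place where I expect a new estimate, is the transversality $\nu(q)\notin T_q\Sigma$ used in the last step. It should follow from the strict concavity of Proposition \ref{prop_concavity_crit}: the $\rho$-derivative of $\nu\cdot\nabla u$ along a ray stays bounded away from zero, so the implicit function theorem presents $\Sigma$ as a radial graph $\rho=\rho^*$ over the ray directions with locally finite slope, and such a graph meets the straight rays transversally; making this rigorous requires, in addition to the four propositions of this section, a one-sided bound on the tangential derivatives of $\nu\cdot\nabla u(\cdot,t)$ along $\Sigma$, obtained from Lemmas \ref{expansion_sqrt} and \ref{error} in the same manner as the other estimates here. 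Alternatively, for $n\geq2$ one can obtain the bound ``at least two'' without transversality: by Proposition \ref{prop_positivity_crit} and Lemma \ref{lem_negativity_crit} the field $\nabla u(\cdot,t)$ points strictly into the closed annulus along its whole boundary and in particular is nonvanishing there, so — assuming the interior critical points are isolated, the conclusion being trivial otherwise — the Poincaré--Hopf formula for inward-pointing fields forces the sum of their indices to equal $\chi(S^{n-1}\times[0,1])-\chi(S^{n-1}\sqcup S^{n-1})=-(1+(-1)^{n-1})$, which differs from $(-1)^n$, the index of $\nabla u(\cdot,t)$ at the interior maximum $p$; hence there is at least one further critical point. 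This route avoids the transversality step but does not recover the sharp count in the one-dimensional case.
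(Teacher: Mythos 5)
Your proof takes essentially the same route as the paper's: the inclusion from Propositions \ref{prop_positivity_crit} and \ref{prop_negativity_crit}; part (1) from a minimum of $u(\cdot,t)$ (the paper minimizes over $CS(f)$ and uses the radial monotonicity of Proposition \ref{prop_positivity_crit} to see this is an interior minimum over $CS(f)+(\sqrt{(2n+4)t}-d_f)B^n$, while your variant via the signs of $u$ on $CS(f)$ and on the outer boundary works just as well); and part (2) by forming the hypersurface $\Sigma=\Gamma(t)$ and taking the extrema of $u(\cdot,t)$ on it, exactly as in the paper.

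The one step you flag as possibly needing a new estimate, the transversality $\nu(q)\notin T_q\Sigma$, does not require one. By the implicit function theorem, $\Sigma$ is the graph $\rho=\rho^{*}(\xi,\nu)$ over $\rN CS(f)$ in the coordinates of Remark \ref{correspondence}, so every tangent vector $v$ of $\Sigma$ at $q=\xi+\rho^{*}\nu$ decomposes as $v=w+(v\cdot\nu)\nu$, where $w$ is tangent at $q$ to the parallel hypersurface $\{\dist(\cdot,CS(f))=\rho^{*}\}$ (to which $\nu$ is normal) and $w\neq 0$ whenever $v\neq 0$; hence no nonzero tangent vector is parallel to $\nu$. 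Equivalently, setting $G=\nabla\dist(\cdot,CS(f))\cdot\nabla u(\cdot,t)$, one has $\nu\cdot\nabla G=(\nu\cdot\nabla)^{2}u<0$ on $\Sigma$ by Proposition \ref{prop_concavity_crit} (since $|\nabla\dist|\equiv 1$ gives $(\nu\cdot\nabla)\nu=0$), so $\nu\notin(\nabla G)^{\perp}=T_q\Sigma$. Only the differentiability of $\rho^{*}$ already supplied by the implicit function theorem is used, not a bound on tangential derivatives; the paper leaves this step implicit. Your Poincar\'e--Hopf alternative for $n\geq 2$ is also workable, but it relies on the classical fact that the index of $\nabla u$ at an isolated, possibly degenerate, local maximum equals $(-1)^{n}$ and on regularity of the annulus boundary, so the direct argument above is preferable.
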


\begin{proof}
From Propositions \ref{prop_positivity_crit} and \ref{prop_negativity_crit}, there exists a constant $T \geq d_f^2 /(2n+4)$ such that if $t \geq T$, then we have
\[
\mathcal{C} \( u(\cdot ,t ) \) \cap \( CS(f) + \psi (t) B^n \) 
\subset A^\circ_f \( \sqrt{(2n+4)t} -d_f , \sqrt{(2n+4)t}  \) \cup CS(f) .
\]

(1) By Proposition \ref{prop_positivity_crit}, for each $(\xi ,\nu ) \in \rN CS (f)$ and any $x \in A^\circ_f ( 0, \sqrt{(2n+4)t} -d_f ) \cap ( \Pos \{ \nu \} + \xi )$, we have $u(x,t) > u(\xi ,t)$. Since $CS(f)$ is compact, there is a point $p \in CS(f)$ such that 
\[
u(p,t) = \min_{q \in CS (f)} u(q,t) .
\]
Thus, the restriction of $u(\cdot ,t)$ to the interior of $CS(f)+ (\sqrt{(2n+4)t} -d_f ) B^n$ is attained its minimum value at $p$.

(2) By Proposition \ref{prop_concavity_crit}, if $t$ is large enough, then, for each $(\xi , \nu) \in \rN CS(f)$, we have a unique point $x \in A_f ( \sqrt{(2n+4)t} -d_f , \sqrt{(2n+4)t} ) \cap ( \Pos \{ \nu \} + \xi )$ with $\nu \cdot \nabla u(x,t)=0$ and $\( \nu \cdot \nabla \)^2 u (x,t) <0$.
By the correspondence in Remark \ref{correspondence}, the implicit function theorem implies that the set 
\[
\Ga (t) := \bigcup_{( \xi , \nu ) \in \rN CS(f)} \left. \left\{ x \in A^\circ_f \( \sqrt{(2n+4)t} -d_f , \sqrt{(2n+4)t} \) \cap \( \Pos \{ \nu \} + \xi \) \rvert \nu \cdot \nabla u(x,t) =0 \right\} 
\]
is a smooth hyper-surface homeomorphic to $\pd CS(f)$. 
Since $\pd  CS(f)$ is homeomorphic to $S^{n-1}$, so is $\Ga (t)$.
The compactness of $\Ga (t)$ guarantees that the restriction of $u(\cdot ,t)$ to $\Ga (t)$ has maximum and minimum points which are critical points of $u(\cdot ,t)$.
\end{proof}

\begin{thm}
\label{crit2}
Let $f$ be as in Notation \ref{notation_main}.
We have
\[
\sup \left\{ \lvert x -m_f \rvert \lvert x \in \mathcal{C} \( u(\cdot ,t ) \) \cap \( CS(f) + \( \sqrt{(2n+4)t} -d_f \) B^n \)  \right\} \right.
=O \( \frac{1}{t} \)
\]
as $t$ goes to infinity.
\end{thm}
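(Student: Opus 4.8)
The plan is to adapt the argument of [CK, Theorem 1]: after replacing $u$ by its principal term, the equation $\nabla u(x,t)=0$ says that, to leading order in $1/t$, the point $x$ is the ``balance point'' of the measure $f(y)\,dy$, and the only such balance point is the centroid $m_f$.

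First I would reduce to critical points lying in the fixed compact set $CS(f)$. Suppose $x\in\mathcal{C}(u(\cdot,t))\cap(CS(f)+(\sqrt{(2n+4)t}-d_f)B^n)$ with $x\notin CS(f)^\circ$. Letting $\xi$ be the nearest point of $CS(f)$ to $x$ and $\nu$ a corresponding outer unit normal, we have $(\xi,\nu)\in\rN CS(f)$ and $x\in A_f(0,\sqrt{(2n+4)t}-d_f)\cap(\Pos\{\nu\}+\xi)$, so Proposition \ref{prop_positivity_crit} gives $\nu\cdot\nabla u(x,t)>0$, contradicting $\nabla u(x,t)=0$. Hence every such critical point lies in $CS(f)$, and then $r=|x-y|\le d_f$ for all $y\in\supp f$. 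It therefore suffices to exhibit a constant $C$ with $|x-m_f|\le C/t$ for all large $t$ and every critical point $x\in CS(f)$.

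Next I would expand $\nabla u(x,t)$ for $x\in CS(f)$. Write $\nabla u=\mathcal{P}(\nabla u)+\mathcal{E}(\nabla u)$; by Lemma \ref{error} the error term is exponentially small, of order $e^{-t/2}(1+t)^{n+1}$ with a constant depending on $n$ and $\|f\|_{W^{(n+2)/2,\infty}}$ (even $n$; the odd case is analogous). For $t>d_f$ one has $B^n_t(x)\supset\supp f$, so by Lemma \ref{derivative} the principal term equals $\kappa_n\,e^{-t/2}\int_{\supp f}\tilde{k}_{\ell_n}(r,t)\,f(y)\,(y-x)\,dy$ for a fixed dimensional constant $\kappa_n>0$, where $\ell_n=(n+2)/2$ for $n$ even and $\ell_n=(n+1)/2$ for $n$ odd. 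By Lemma \ref{expansion_d}(1),
\[
e^{-t/2}\tilde{k}_{\ell_n}(r,t)=-\frac{c_n}{t^{(n+4)/2}}\(1+O\(\tfrac1t\)\)\qquad(c_n>0),
\]
uniformly for $r\in[0,d_f]$, whence
\[
\mathcal{P}(\nabla u(x,t))=\frac{\kappa_n c_n}{t^{(n+4)/2}}\(\int_{\supp f}f(y)\,(x-y)\,dy+O\(\tfrac1t\)\)=\frac{\kappa_n c_n}{t^{(n+4)/2}}\(\|f\|_1\,(x-m_f)+O\(\tfrac1t\)\),
\]
the vector $O(1/t)$ having magnitude at most $C/t$ uniformly over $x\in CS(f)$, since $|x-y|\le d_f$ and $\|f\|_1<\infty$. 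Hence, if $x=x(t)\in CS(f)$ is a critical point, then $\mathcal{P}(\nabla u(x(t),t))=-\mathcal{E}(\nabla u(x(t),t))$, and dividing by $\kappa_n c_n\|f\|_1 t^{-(n+4)/2}$ gives $|x(t)-m_f|\le C/t+C' t^{(n+4)/2}e^{-t/2}(1+t)^{n+1}=O(1/t)$ uniformly in $x(t)$; taking the supremum finishes the proof.

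The only genuinely technical ingredient is the uniform kernel expansion displayed above: this is the ``tedious computation'' underlying Lemma \ref{expansion_d}(1), together with the routine check that its remainder is $O(1/t)$ uniformly for $r\in[0,d_f]$. Everything then hinges --- exactly as in [CK] --- on the fact that the leading term of $e^{-t/2}\tilde{k}_{\ell_n}(r,t)$ depends on $y$ only through $r=|x-y|$ and is, to that order, independent of $r$, so that the first $f$-moment is not cancelled and reproduces $\|f\|_1(x-m_f)$, while every correction picks up an extra factor $1/t$. The exponential smallness required of $\mathcal{E}(\nabla u)$ is already furnished by Lemma \ref{error}, so no essentially new difficulty appears.
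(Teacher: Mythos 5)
Your proposal is correct and follows essentially the same route as the paper: reduce to critical points inside $CS(f)$ via Proposition \ref{prop_positivity_crit} (the paper cites Theorem \ref{crit1}, which rests on the same proposition), then use the critical-point equation together with the uniform expansion $e^{-t/2}\tilde{k}_{\ell_n}(r,t)=-c_n t^{-(n+4)/2}(1+O(1/t))$ from Lemma \ref{expansion_d}, the centroid identity $\int f(y)(x-y)\,dy=\|f\|_1(x-m_f)$, and the exponential bound on $\mathcal{E}(\nabla u)$ from Lemma \ref{error}. The only cosmetic difference is that the paper isolates $|x-m_f|$ by a lower bound $-\tilde{k}(r,t)\ge-\tilde{k}(d_f,t)$ coming from the monotonicity statement in Lemma \ref{expansion_d}(2), whereas you divide through using the uniform asymptotics directly; both are valid.
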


\begin{proof}
We give a proof for the even dimensional case. The argument works for the odd dimensional case.

Fix an arbitrary $x \in \mathcal{C} ( u(\cdot ,t ) ) \cap ( CS(f) + ( \sqrt{(2n+4)t} -d_f ) B^n )$. 
From Theorem \ref{crit1}, there exists a constant $T \geq d_f^2 /(2n+4)$ such that if $t \geq T$, then we have
\[
\mathcal{C} \( u(\cdot ,t ) \) \cap \( CS(f) + \( \sqrt{(2n+4)t} -d_f \) B^n \) \subset CS(f) .
\]
Thus, we may assume $x \in \mathcal{C} ( u(\cdot ,t ) ) \cap CS(f)$. 
Also, we take $T\geq d_f$, which implies that if $t \geq T$, then the ball $B^n_t(x)$ contains $CS(f)$.

By Lemma \ref{derivative}, we have
\[
- e^{-t/2} \( \int_{B^n_t(x)} \tilde{k}_{\frac{n+2}{2}} (r,t) f(y)dy \) x 
= - e^{-t/2} \( \int_{B^n_t(x)} \tilde{k}_{\frac{n+2}{2}} (r,t) f(y) y dy \)
-\frac{16}{\ga_n} \mathcal{E} \( \nabla u(x,t) \) .
\]
Also, we remark
\[
\( \int_{\supp f} f(y) dy \) m_f = \int_{\supp f} f(y) y dy .
\]
Hence there exists a positive constant $C$ such that if $t$ is large enough, then we have
\begin{align*}
&\frac{t^{n/2+2}}{(n+2)2^{n/2+1}} e^{-t/2} \( -\tilde{k}_{\frac{n+2}{2}} \( d_f ,t\) \) \| f \|_1 \lvert x- m_f \rvert  \\
&\leq \frac{t^{n/2+2}}{(n+2)2^{n/2+1}} e^{-t/2} \( \int_{\supp f} \( -\tilde{k}_{\frac{n+2}{2}} \( r ,t\) \)  f(y) dy \) \lvert x- m_f \rvert  \\
&\leq \frac{t^{n/2+2}}{(n+2)2^{n/2+1}} e^{-t/2} \lvert \int_{\supp f} \tilde{k}_{\frac{n+2}{2}} \( r ,t\)   f(y) \( m_f -y \) dy \rvert + \frac{16 t^{n/2+2}}{(n+2) \ga_n 2^{n/2+1}} \lvert \mathcal{E} \( \nabla u(x,t) \) \rvert \\
&= \lvert \int_{\supp f} \( 1+ O \( \frac{1}{t} \) \) f(y) \( m_f -y \) dy \rvert 
+\frac{16 t^{n/2+2}}{(n+2) \ga_n 2^{n/2+1}} \lvert \mathcal{E} \( \nabla u(x,t) \) \rvert \\
&\leq \frac{C d_f \| f \|_1}{t} +\frac{16 C(n)}{(n+2) \ga_n 2^{n/2+1}} e^{-t/2} t^{n/2+2} (1+t)^{n+1} \| f \|_{W^{(n+2)/2, \infty}} .
\end{align*}
Here, the first inequality follows from the second assertion in Lemma \ref{expansion_d}. 
The second inequality follows from the above remark on $x$ and the triangle inequality. 
The equation follows from the first assertion in Lemma \ref{expansion_d}. 
The last inequality follows from the above remark on $m_f$ and Lemma \ref{error}. 

By Lemma \ref{expansion_d}, if $t$ is large enough, then we have
\[
\frac{t^{n/2+2}}{(n+2)2^{n/2+1}} e^{-t/2} \( -\tilde{k}_{\frac{n+2}{2}} \( d_f ,t\) \) >\frac{1}{2} .
\]
Hence there exists a positive constant $C$ such that if $t$ is large enough, then we have
\[
\lvert x-m_f \rvert 
\leq \frac{C}{t} 
\]
for any $x \in \mathcal{C} ( u(\cdot ,t) ) \cap ( CS(f) +(\sqrt{(2n+4)t}-d_f )B^n )$.
\end{proof}

\section{Time-delayed hot and cold spots}

Let $f$ be as in Notation \ref{notation_main}, and $u$ the unique classical solution of \eqref{DWfg} with $f+g=0$. 
In this section, we discuss the large time behavior of the spatial maximum and minimum sets of $u$,
\begin{equation}
\mathcal{M} \( u(\cdot ,t ) \) 
= \left. \left\{ x \in \R^n \rvert u (x,t)  = \max  u ( \cdot ,t ) \right\}  ,\ 
\mathcal{M} \( -u(\cdot ,t ) \)  
= \left. \left\{ x \in \R^n \rvert u (x,t) = \min  u ( \cdot ,t ) \right\} .
\end{equation}

\begin{prop}
\label{lb_CS}
Let $f$ be as in Notation \ref{notation_main}.
There exists a constant $T>0$ such that if $t \geq T$, then, for any $x \in CS(f)$, we have
\[
-u(x,t) > 
\begin{cases}
\ds \frac{n \ga_n 2^{(n+1)/2}}{5 \sqrt{\pi} t^{n/2+1}} \| f \|_1  &( n \in 2 \N -1 ), \\
\ds \frac{n \ga_n 2^{n/2}}{5 t^{n/2+1}} \| f \|_1 &(n\in 2\N ).
\end{cases}
\]
\end{prop}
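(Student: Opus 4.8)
The plan is to run the same scheme as in the proof of Proposition~\ref{prop_negativity_null}, but now exploiting that $x\in CS(f)$ forces the distance variable $r=|x-y|$ to stay bounded by $d_f$ on $\supp f$, so the sharp expansion of Lemma~\ref{expansion_d}(1) applies in place of Lemma~\ref{expansion_sqrt}. First I would fix $T\geq d_f$; then for every $t\geq T$ and every $x\in CS(f)$ the ball $B^n_t(x)$ contains $CS(f)\supset\supp f$, so $r\leq d_f$ whenever $f(y)\neq 0$. Writing $u(x,t)=\mathcal{P}(u(x,t))+\mathcal{E}(u(x,t))$ via Proposition~\ref{solution} and the definition of the error term, the statement reduces to a lower bound for $-\mathcal{P}(u(x,t))$ together with the exponentially small estimate of Lemma~\ref{error}.

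For the principal term I would invoke Lemma~\ref{expansion_d}(1) with $\ell=n/2$ in the even case and $\ell=(n-1)/2$ in the odd case (including $n=1$); note that then $\ell\,2^{\ell+1}=n\,2^{n/2}$ with $\ell+1=n/2+1$, respectively $(2\ell+1)2^{\ell+1}=n\,2^{(n+1)/2}$ with $(2\ell+3)/2=n/2+1$. Since this expansion is uniform for $0\leq r\leq d_f$ and $f\geq 0$, integrating $e^{-t/2}\tilde{k}_{n/2}(r,t)f(y)$ (resp. $e^{-t/2}\tilde{k}_{(n-1)/2}(r,t)f(y)$) over $B^n_t(x)$ and multiplying by $\ga_n/4$ gives, in the even case,
\[
-\mathcal{P}(u(x,t))=\frac{n\,\ga_n\,2^{n/2}}{4\,t^{n/2+1}}\bigl(1+O(1/t)\bigr)\|f\|_1 ,
\]
and the analogous identity with $2^{n/2}$ replaced by $2^{(n+1)/2}/\sqrt{\pi}$ in the odd case. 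Enlarging $T$ so that the factor $1+O(1/t)$ exceeds $9/10$ yields $-\mathcal{P}(u(x,t))>\tfrac{9}{40}\,n\ga_n 2^{n/2}t^{-(n/2+1)}\|f\|_1$, and similarly in the odd case.

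It remains to absorb the error. By Lemma~\ref{error}, $|\mathcal{E}(u(x,t))|\leq C(n)e^{-t/2}(1+t)^{n}\|f\|_{W^{n/2,\infty}}$ in the even case (with the exponent and Sobolev index as given there in the odd case), which decays exponentially and is therefore, for $T$ large enough, at most $\tfrac{1}{40}\,n\ga_n 2^{n/2}t^{-(n/2+1)}\|f\|_1$ (resp. the odd analogue). Combining, $-u(x,t)=-\mathcal{P}(u(x,t))-\mathcal{E}(u(x,t))\geq -\mathcal{P}(u(x,t))-|\mathcal{E}(u(x,t))|>\bigl(\tfrac{9}{40}-\tfrac{1}{40}\bigr)\,n\ga_n 2^{n/2}t^{-(n/2+1)}\|f\|_1=\tfrac15\,n\ga_n 2^{n/2}t^{-(n/2+1)}\|f\|_1$, which is the asserted inequality, and the odd case is identical. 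There is no real obstacle here; the only point requiring care is the bookkeeping of numerical constants so that exactly the fraction $1/5$ survives, together with noting that all the $O(1/t)$ terms are uniform over $x\in CS(f)$ — which is automatic since Lemma~\ref{expansion_d}(1) is already stated uniformly for $0\leq r\leq d_f$.
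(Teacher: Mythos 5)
Your proposal is correct and follows essentially the same route as the paper's proof: both reduce to Lemma \ref{expansion_d}(1) for the principal term (using $r\leq d_f$ on $\supp f$ when $x\in CS(f)$ and $T\geq d_f$), absorb the exponentially small error via Lemma \ref{error}, and track the constants so that the factor $1/5$ survives. The only difference is cosmetic bookkeeping (your $\tfrac{9}{40}-\tfrac{1}{40}$ versus the paper's $\tfrac{19}{20}\cdot\tfrac{8}{9}$ with a $\tfrac{1}{19}$ relative error), and both yield the stated strict inequality.
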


\begin{proof}
We give a proof for the even dimensional case. The argument works for the odd dimensional case. 

We take $T \geq d_f$, which implies that if $t \geq T$, then, for any $x \in CS(f)$, the ball $B^n_t(x)$ contains $CS(f)$.

By Proposition \ref{solution} and Lemma \ref{expansion_d}, if $t$ is large enough, then we have
\begin{align*}
\mathcal{P} \( u(x,t) \) 
&= -\frac{n\ga_n 2^{n/2}}{4t^{n/2+1}} \int_{B^n_t(x)} \exp \( -\frac{r^2}{2\( t +\sqrt{t^2 -r^2}\)} \) \( 1+ O \( \frac{1}{t} \) \) f(y) dy \\
&<-\frac{n \ga_n 2^{n/2}}{4t^{n/2+1}} \cdot \frac{19}{20} \int_{B^n_t(x)} \exp \( -\frac{r^2}{2\( t +\sqrt{t^2 -r^2}\)} \) f(y) dy \\
&<-\frac{n \ga_n 2^{n/2}}{4t^{n/2+1}} \cdot \frac{19}{20} \exp \( -\frac{d_f^2}{2 \( t +\sqrt{t -d_f^2} \)} \) \| f \|_1 \\
&< -\frac{n \ga_n 2^{n/2}}{4t^{n/2+1}} \cdot \frac{19}{20}  \cdot \frac{8}{9} \| f \|_1 
\end{align*}
for any $x \in CS(f)$. 
Here, the second inequality follows from $r \leq d_f$. 

By Lemma \ref{error}, if $t$ is large enough, then we have
\[
\lvert \mathcal{E} \( u(x,t) \) \rvert
\leq C(n) e^{-t/2} (1+t)^n \| f \|_{W^{n/2,\infty}} 
\leq \frac{n \ga_n 2^{n/2}}{4t^{n/2+1}} \cdot \frac{19}{20}  \cdot \frac{8}{9} \cdot \frac{1}{19} \| f \|_1 
\]
for any $x \in CS(f)$.

Hence if $t$ is large enough, then we have
\[
-u(x,t) 
> \frac{n \ga_n 2^{n/2}}{4t^{n/2+1}} \frac{19 \cdot 8 - 8}{20 \cdot 9} \| f \|_1 
=\frac{n \ga_n 2^{n/2}}{5 t^{n/2+1}} \| f \|_1
\]
for any $x \in CS(f)$.
\end{proof}

\begin{prop}
\label{ub_A}
Let $f$ be as in Notation \ref{notation_main}.
There exists a constant $T \geq d_f^2 /(2n+4)$ such that if $t \geq T$, then, for any $x \in A_f ( \sqrt{(2n+4)t}-d_f, \sqrt{(2n+4)t} )$, we have
\[
u (x,t)
< 
\begin{cases}
\ds \frac{7 \ga_n 2^{(n+1)/2}}{10 \sqrt{\pi} t^{n/2+1}} \exp \( -\frac{n+2}{2} \) \| f \|_1  &( n \in 2 \N-1 ),\\
\ds \frac{7 \ga_n 2^{n/2}}{10t^{n/2+1}} \exp \( -\frac{n+2}{2} \) \|  f \|_1 &( n \in 2 \N ).
\end{cases}
\]
\end{prop}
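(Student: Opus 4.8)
The plan is to follow the template of Propositions \ref{prop_negativity_null} and \ref{lb_CS}: split $u(x,t)=\mathcal{P}(u(x,t))+\mathcal{E}(u(x,t))$, estimate the principal term from above by combining Proposition \ref{solution} with the asymptotic expansion of its kernel in Lemma \ref{expansion_sqrt}, and swallow the exponentially small error term using Lemma \ref{error}. I describe the even-dimensional case; the odd case runs the same way after replacing $\ga_n 2^{n/2}$ by $\ga_n 2^{(n+1)/2}/\sqrt{\pi}$ and invoking Lemma \ref{expansion_sqrt}(1) with $\ell=(n-1)/2$ in place of Lemma \ref{expansion_sqrt}(2) with $\ell=n/2$.

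First I fix $T\geq d_f^2/(2n+4)$ so large that $t\geq\sqrt{(2n+4)t}+d_f$ for $t\geq T$; then for every $x\in A_f(\sqrt{(2n+4)t}-d_f,\sqrt{(2n+4)t})$ the ball $B^n_t(x)$ contains $CS(f)\supset\supp f$, and for $y\in\supp f$ the radius $r=|x-y|$ obeys
\[
\sqrt{(2n+4)t}-d_f\leq r\leq\sqrt{(2n+4)t}+d_f .
\]
By Proposition \ref{solution} and Lemma \ref{expansion_sqrt}, $\mathcal{P}(u(x,t))$ equals $\ga_n 2^{n/2}/(4t^{n/2})$ times $\int_{B^n_t(x)}\exp\!\big(-r^2/(2(t+\sqrt{t^2-r^2}))\big)\,Q(r,t)\,f(y)\,dy$, where $Q(r,t)=-n/t+\tfrac12(r/t)^2+O((r/t)^4)+O(r^2/t^3)+O(1/t^2)$ is, once $t$ is large, positive and increasing in $r$ over the range above. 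Two elementary bounds then control the integrand: since $t+\sqrt{t^2-r^2}\leq 2t$ and $r\geq\sqrt{(2n+4)t}-d_f$, the exponential is at most $\exp(-r^2/(4t))\leq\exp(-\tfrac{n+2}{2}+\tfrac{d_f\sqrt{2n+4}}{2\sqrt{t}})$, which is $<\tfrac{11}{10}e^{-(n+2)/2}$ for $t$ large; and since $Q(\cdot,t)$ is increasing, it is at most its value at $r=\sqrt{(2n+4)t}+d_f$, namely $\tfrac2t+O(t^{-3/2})<\tfrac{22}{10t}$ for $t$ large. Multiplying these out over $B^n_t(x)$ and using $\int_{B^n_t(x)}f\leq\|f\|_1$ gives $\mathcal{P}(u(x,t))<\tfrac{13\,\ga_n 2^{n/2}}{20\,t^{n/2+1}}e^{-(n+2)/2}\|f\|_1$ for $t$ large.

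Finally, Lemma \ref{error} yields $|\mathcal{E}(u(x,t))|\leq C(n)e^{-t/2}(1+t)^n\|f\|_{W^{n/2,\infty}}<\tfrac{\ga_n 2^{n/2}}{20\,t^{n/2+1}}e^{-(n+2)/2}\|f\|_1$ for $t$ large, and adding this to the previous bound produces $u(x,t)<\tfrac{7\,\ga_n 2^{n/2}}{10\,t^{n/2+1}}e^{-(n+2)/2}\|f\|_1$, uniformly in $x\in A_f(\sqrt{(2n+4)t}-d_f,\sqrt{(2n+4)t})$. The only delicate point is quantitative: as $y$ ranges over $\supp f$ the exponential factor is largest exactly where $Q$ is smallest and conversely, so bounding each factor by its own maximum over $r\in[\sqrt{(2n+4)t}-d_f,\sqrt{(2n+4)t}+d_f]$ overestimates a little, and one must check the product still stays strictly below the target constant $7/10$. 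This works because the clean leading constant of the principal term is $\tfrac14\cdot 2=\tfrac12$, leaving a margin of $\tfrac15$ to absorb all the $O$-corrections together with the exponentially small error term.
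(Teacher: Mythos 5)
Your proposal is correct and takes essentially the same route as the paper's own proof: the same decomposition into principal and error terms, the same use of Proposition \ref{solution} with Lemma \ref{expansion_sqrt} to bound the kernel from above at $r=\sqrt{(2n+4)t}+d_f$ and the exponential factor at $r=\sqrt{(2n+4)t}-d_f$, and the same absorption of the error via Lemma \ref{error}; only the intermediate numerical constants differ, and your final tally $\tfrac{13}{20}+\tfrac{1}{20}=\tfrac{7}{10}$ checks out.
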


\begin{proof}
We give a proof for the even dimensional case. The argument works for the odd dimensional case. 

We take a large enough $T$ such that if $t \geq T$, then we have $t \geq \sqrt{(2n+4)t} +d_f$, which implies that, for any $x \in CS(f) +  \sqrt{(2n+4)t} B^n$, the ball $B^n_t(x)$ contains $CS(f)$.

By Proposition \ref{solution} and Lemma \ref{expansion_sqrt}, if $t$ is large enough, then we have
\begin{align*}
\mathcal{P} \( u(x,t) \) 
&= \frac{\ga_n 2^{n/2}}{4t^{n/2}} \int_{B^n_t(x)} \exp \( -\frac{r^2}{2\( t^2 +\sqrt{t^2 -r^2}\)} \) \( -\frac{n}{t} + \frac{1}{2} \( \frac{r}{t} \)^2 + O \( \frac{1}{t^2} \)  \)  f(y) dy \\
&<\frac{5\ga_n 2^{n/2}}{8t^{n/2+1}} \int_{B^n_t(x)} \exp \( -\frac{r^2}{2\( t^2 +\sqrt{t^2 -r^2}\)} \) \( 1+ O \( \frac{1}{t} \)  \)  f(y) dy \\
&<\frac{5 \ga_n 2^{n/2}}{8t^{n/2+1}} \cdot \frac{52}{51} \int_{B^n_t(x)} \exp \( -\frac{r^2}{2\( t^2 +\sqrt{t^2 -r^2}\)} \)   f(y) dy\\
&<\frac{5 \ga_n 2^{n/2}}{8t^{n/2+1}} \cdot \frac{52}{51} \exp \( -\frac{\( \sqrt{(2n+4)t} -d_f \)^2}{4t} \)   \| f \|_1 \\
&<\frac{5 \ga_n 2^{n/2}}{8 t^{n/2+1}} \cdot \frac{52}{51} \cdot \frac{14}{13} \exp \( -\frac{n+2}{2} \) \| f \|_1
\end{align*}
for any $x \in A_f ( \sqrt{(2n+4)t}-d_f, \sqrt{(2n+4)t} )$. 
Here, for the first inequality, we took a large enough $T>0$ such that if $t \geq T$, then we have
\[
r \leq \sqrt{(2n+4)t} + d_f \leq \sqrt{(2n+5)t} .
\]
The third inequality follows from $r \geq \sqrt{(2n+4)t} -d_f$. 

By Lemma \ref{error}, if $t$ is large enough, then we have
\[
\lvert \mathcal{E} \( u(x,t) \) \rvert
\leq C(n) e^{-t/2} (1+t)^n \| f \|_{W^{n/2,\infty}} 
\leq \frac{5 \ga_n 2^{n/2}}{8 t^{n/2+1}} \cdot \frac{52}{51} \cdot \frac{14}{13} \cdot \frac{1}{50} \exp \( -\frac{n+2}{2} \) \| f \|_1
\]
for any $x \in A_f ( \sqrt{(2n+4)t}-d_f, \sqrt{(2n+4)t} )$.

Hence if $t$ is large enough, then we have
\[
u(x,t) < \frac{5 \ga_n 2^{n/2}}{8 t^{n/2+1}} \cdot \frac{52}{51} \cdot \frac{14}{13} \cdot \frac{50+ 1}{50} \exp \( -\frac{n+2}{2} \) \| f \|_1
=\frac{7 \ga_n 2^{n/2}}{10 t^{n/2+1}} \exp \( -\frac{n+2}{2} \) \| f \|_1
\]
for any $x \in A_f ( \sqrt{(2n+4)t}-d_f, \sqrt{(2n+4)t} )$.
\end{proof}

\begin{prop}
\label{lb_A}
Let $f$ be as in Notation \ref{notation_main}.
There exists a constant $T \geq d_f^2 /(2n+4)$ such that if $t \geq T$, then, for any $x \in A_f ( \sqrt{(2n+4)t}-d_f, \sqrt{(2n+4)t} )$, we have
\[
u(x,t)
>
\begin{cases}
\ds \frac{3\ga_n 2^{(n+1)/2}}{32\sqrt{\pi} t^{n/2+1}} \exp \( -\frac{2n+5}{2} \) \| f \|_1  &( n \in 2 \N-1 ),\\
\ds \frac{3\ga_n 2^{n/2}}{32t^{n/2+1}} \exp \( -\frac{2n+5}{2} \) \|  f \|_1 &( n \in 2 \N ).
\end{cases}
\]
\end{prop}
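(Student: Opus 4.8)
The plan is to reuse the template of Propositions \ref{lb_CS} and \ref{ub_A}: bound the principal term $\mathcal{P}(u(x,t))$ from below and then absorb the exponentially small error term $\mathcal{E}(u(x,t))$ via Lemma \ref{error}. What makes a clean lower bound possible here --- in contrast with the delicate splitting argument of Lemma \ref{lem_positivity_null}, where the kernel degenerates at the inner radius --- is that on the annulus $A_f(\sqrt{(2n+4)t}-d_f,\sqrt{(2n+4)t})$ every $y\in\supp f\subset CS(f)$ satisfies $r=\lvert x-y\rvert\geq\dist(x,CS(f))\geq\sqrt{(2n+4)t}-d_f$, hence $r^2\geq(2n+3)t$ once $t$ is large. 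Consequently the bracket appearing in Lemma \ref{expansion_sqrt} for $\ell=n/2$ (resp.\ $\ell=(n-1)/2$ when $n$ is odd) is bounded below by a strictly positive quantity of order $1/t$, so the integrand $e^{-t/2}\tilde k_{n/2}(r,t)f(y)$ is nonnegative and $\mathcal{P}(u(x,t))$ is a genuinely positive integral that we can estimate term by term.

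First I would fix $T$ large enough that $t\geq\sqrt{(2n+4)t}+d_f$, which ensures $B^n_t(x)\supset CS(f)\supset\supp f$ for every $x$ in the annulus, so the integral for $\mathcal{P}(u(x,t))$ in Proposition \ref{solution} runs over all of $\supp f$. Substituting the expansion of $\tilde k_{n/2}(r,t)$ from Lemma \ref{expansion_sqrt} and using $e^{-t/2}\exp(\sqrt{t^2-r^2}/2)=\exp\bigl(-r^2/(2(t+\sqrt{t^2-r^2}))\bigr)$, I would estimate the bracket below by $1/t$ for $r^2\geq(2n+3)t$ and $t$ large (the quadratic term $\tfrac12(r/t)^2$ dominates, the remaining terms being $O(1/t^2)$ on the range $r^2/t\leq 2n+5$), and the exponential factor below by $\exp(-(2n+5)/2)$ using $r\leq\sqrt{(2n+4)t}+d_f\leq\sqrt{(2n+5)t}$ together with $t+\sqrt{t^2-r^2}\geq t$. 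This gives, uniformly over the annulus,
\[
\mathcal{P}(u(x,t))\ >\ \frac{\ga_n 2^{n/2}}{4\,t^{n/2+1}}\exp\Bigl(-\frac{2n+5}{2}\Bigr)\|f\|_1
\]
in even dimensions (and the analogue with $2^{(n+1)/2}/\sqrt{\pi}$ in place of $2^{n/2}$ in odd dimensions), a constant comfortably larger than the stated $3/32$ (resp.\ $3/(32\sqrt{\pi})$).

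Finally I would invoke Lemma \ref{error}: $\lvert\mathcal{E}(u(x,t))\rvert\leq C(n)e^{-t/2}(1+t)^n\|f\|_{W^{n/2,\infty}}$ in even dimensions (with the odd-dimensional analogue), which for $t$ large is less than $\frac{\ga_n 2^{n/2}}{32\,t^{n/2+1}}\exp(-(2n+5)/2)\|f\|_1$; then $u(x,t)=\mathcal{P}(u(x,t))+\mathcal{E}(u(x,t))$ exceeds $\frac{3\ga_n 2^{n/2}}{32\,t^{n/2+1}}\exp(-(2n+5)/2)\|f\|_1$. The odd-dimensional case is handled identically with $\ell=(n-1)/2$ and the corresponding error bound. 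I do not foresee a genuine obstacle; the only care needed is the numerical bookkeeping to land on the exact constants after the subtraction, and the slack between $1/4$ and $3/32$ makes any sufficiently crude version of the two estimates above enough.
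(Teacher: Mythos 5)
Your proposal is correct and follows essentially the same route as the paper's proof: expand $\tilde k_{n/2}(r,t)$ via Lemma \ref{expansion_sqrt}, use $r\geq\sqrt{(2n+4)t}-d_f$ to get a positive lower bound of order $1/t$ for the bracket, use $r\leq\sqrt{(2n+4)t}+d_f\leq\sqrt{(2n+5)t}$ to bound the exponential factor below by $\exp(-(2n+5)/2)$, and absorb the error term with Lemma \ref{error}. The only difference is cosmetic bookkeeping of the intermediate constants (the paper lands on $3/16$ for the principal term before subtracting $3/32$), which your noted slack covers.
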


\begin{proof}
We give a proof for the even dimensional case. The argument works for the odd dimensional case. 

We take a large enough $T$ such that if $t \geq T$, then we have $t \geq \sqrt{(2n+4)t} +d_f$, which implies that, for any $x \in CS(f) +  \sqrt{(2n+4)t} B^n$, the ball $B^n_t(x)$ contains $CS(f)$.

By Proposition \ref{solution} and Lemma \ref{expansion_sqrt}, if $t$ is large enough, then we have
\begin{align*}
\mathcal{P} \( u(x,t) \) 
&= \frac{\ga_n 2^{n/2}}{4t^{n/2}} \int_{B^n_t(x)} \exp \( -\frac{r^2}{2\( t^2 +\sqrt{t^2 -r^2}\)} \) \( -\frac{n}{t} + \frac{1}{2} \( \frac{r}{t} \)^2 + O \( \frac{1}{t^2} \)  \)  f(y) dy \\
&> \frac{3\ga_n 2^{n/2}}{8t^{n/2+1}} \int_{B^n_t(x)} \exp \( -\frac{r^2}{2\( t^2 +\sqrt{t^2 -r^2}\)} \) \( 1+ O \( \frac{1}{t} \) \) f(y) dy \\
&> \frac{3\ga_n 2^{n/2}}{16 t^{n/2+1}} \int_{B^n_t(x)} \exp \( -\frac{r^2}{2\( t^2 +\sqrt{t^2 -r^2}\)} \)  f(y) dy \\
&> \frac{3\ga_n 2^{n/2}}{16 t^{n/2+1}} \exp \( -\frac{2n+5}{2} \) \| f \|_1
\end{align*}
for any $x \in A_f ( \sqrt{(2n+4)t}-d_f, \sqrt{(2n+4)t} )$. 
Here, for the first and last inequalities, we took a large enough $T>0$ such that if $t \geq T$, then we have
\[
\sqrt{(2n+3)t} \leq \sqrt{(2n+4)t} - d_f \leq r \leq \sqrt{(2n+4)t} + d_f \geq \sqrt{(2n+5)t} .
\]
The second inequality is usual.

By Lemma \ref{error}, if $t$ is large enough, then we have
\[
\lvert \mathcal{E} \( u(x,t) \) \rvert
\leq C(n) e^{-t/2} (1+t)^n \| f \|_{W^{n/2,\infty}} 
\leq \frac{3\ga_n 2^{n/2}}{32 t^{n/2+1}} \exp \( -\frac{2n+5}{2} \) \| f \|_1
\]
for any $x \in A_f ( \sqrt{(2n+4)t}-d_f, \sqrt{(2n+4)t} )$.

Hence if $t$ is large enough, then we have
\[
u(x,t) < \frac{3\ga_n 2^{n/2}}{32 t^{n/2+1}} \exp \( -\frac{2n+5}{2} \) \| f \|_1
\]
for any $x \in A_f ( \sqrt{(2n+4)t}-d_f, \sqrt{(2n+4)t} )$.
\end{proof}

\begin{prop}
\label{ub_E}
Let $f$ be as in Notation \ref{notation_main}.
Let $\psi (t)$ be of small order of $t$ as $t$ goes to infinity with $0\leq \psi (t) \leq t$ for any $t>0$.
There exists a constant $T>0$ such that if $t \geq T$, then, for any $x \in (CS(f) + \psi (t) B^n)^c$, we have
\[
\lvert u(x,t) \rvert
< 
\begin{cases}
\ds \frac{3 \ga _n 2^{(n+1)/2}}{8 \sqrt{\pi} t^{n/2}} \exp \( - \frac{\psi (t)^2}{4t} \) \| f \|_1 &(n \in 2\N -1 ),\\
\ds \frac{3 \ga_n 2^{n/2}}{8t^{n/2}} \exp \( - \frac{\psi (t)^2}{4t} \) \| f \|_1 &(n \in 2\N  ) .
\end{cases}
\]
\end{prop}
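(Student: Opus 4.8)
The plan is to run the usual three-ingredient argument of the previous propositions: the decomposition $u(x,t)=\mathcal{P}(u(x,t))+\mathcal{E}(u(x,t))$ of Proposition \ref{solution}, the exponentially small bound on $\mathcal{E}(u(x,t))$ from Lemma \ref{error}, and a pointwise estimate of the principal term, which for even $n$ is $\mathcal{P}(u(x,t))=\frac{\ga_n}{4}e^{-t/2}\int_{B_t^n(x)}\tilde{k}_{n/2}(r,t)f(y)\,dy$ (the odd case works the same way with $n/2$ replaced by $(n-1)/2$ and Remark \ref{expansion_k}(1)). The one new feature is that $x$ now ranges over the whole exterior of $CS(f)+\psi(t)B^n$, so $r$ is not confined to an interval of order $\sqrt{t}$; on the other hand the asserted bound is far from sharp (presumably the true size is of order $t^{-n/2-1}$), so only a crude estimate of the kernel is needed.

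First I would reduce the problem. By Proposition \ref{solution} all the integrands defining $u(x,t)$ are supported in $\supp f\cap B_t^n(x)$, so $u(x,t)=0$ whenever $B_t^n(x)\cap\supp f=\emptyset$, and we may assume $B_t^n(x)\cap\supp f\neq\emptyset$. Since $\supp f\subseteq CS(f)$, every $y\in\supp f$ then satisfies $\psi(t)<\dist(x,CS(f))\le|x-y|=r\le t$, and because $t+\sqrt{t^2-r^2}\le 2t$ and $r>\psi(t)\ge 0$,
\[
\exp\left(-\frac{r^2}{2(t+\sqrt{t^2-r^2})}\right)\le\exp\left(-\frac{r^2}{4t}\right)\le\exp\left(-\frac{\psi(t)^2}{4t}\right).
\]

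Next I would bound $e^{-t/2}|\tilde{k}_{n/2}(r,t)|$ on $(\psi(t),t]$ by splitting the range of $r$ at a small dimension-dependent cutoff $\ep_n\in(0,1)$. For $\psi(t)<r\le\ep_n t$ one has $s:=\sqrt{t^2-r^2}/2\ge\frac12\sqrt{1-\ep_n^2}\,t\to\infty$, so the asymptotics of Remark \ref{expansion_k} (the computation behind Lemmas \ref{expansion_sqrt}--\ref{expansion_smallo}) give $\tilde{k}_{n/2}(r,t)=\frac{e^{s}}{2s^{n/2}}\left(\frac{t-2s}{s}+O(1/t)\right)$ with $\frac{t-2s}{s}=\frac{2r^2}{\sqrt{t^2-r^2}\,(t+\sqrt{t^2-r^2})}\le\frac{2\ep_n^2}{\sqrt{1-\ep_n^2}}$; taking $\ep_n$ small enough and then $t$ large, this yields $e^{-t/2}|\tilde{k}_{n/2}(r,t)|\le\frac14\cdot\frac{2^{n/2}}{t^{n/2}}\exp\left(-\frac{r^2}{2(t+\sqrt{t^2-r^2})}\right)$. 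For $\ep_n t<r\le t$ the factor $e^{-(t-\sqrt{t^2-r^2})/2}=\exp\left(-\frac{r^2}{2(t+\sqrt{t^2-r^2})}\right)$ is already at most $e^{-\ep_n^2 t/4}$, while bounding each $k_\ell(s)$ on the bounded range $0\le s\le\frac12\sqrt{1-\ep_n^2}\,t$ by its right-endpoint value (the $k_\ell$ are increasing by Remark \ref{recursion}) gives $e^{-t/2}|\tilde{k}_{n/2}(r,t)|\le C(n)(1+t)e^{-c(n)t}$ for some $c(n)>0$, which for large $t$ lies far below $\frac14\cdot\frac{2^{n/2}}{t^{n/2}}e^{-\psi(t)^2/(4t)}$ since $\psi(t)^2/(4t)=o(t)$. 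Thus in both ranges, using $r>\psi(t)$, one gets $e^{-t/2}|\tilde{k}_{n/2}(r,t)|\le\frac14\cdot\frac{2^{n/2}}{t^{n/2}}e^{-\psi(t)^2/(4t)}$.

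Finally, integrating this against $f$ over $\supp f\cap B_t^n(x)$ gives $|\mathcal{P}(u(x,t))|\le\frac{\ga_n 2^{n/2}}{16\,t^{n/2}}e^{-\psi(t)^2/(4t)}\|f\|_1$, and Lemma \ref{error} gives $|\mathcal{E}(u(x,t))|\le C(n)e^{-t/2}(1+t)^n\|f\|_{W^{n/2,\infty}}$; enlarging $T$ so that this error is at most $\frac{\ga_n 2^{n/2}}{16\,t^{n/2}}e^{-\psi(t)^2/(4t)}\|f\|_1$ and adding the two estimates yields $|u(x,t)|\le\frac{\ga_n 2^{n/2}}{8\,t^{n/2}}e^{-\psi(t)^2/(4t)}\|f\|_1<\frac{3\ga_n 2^{n/2}}{8\,t^{n/2}}e^{-\psi(t)^2/(4t)}\|f\|_1$ for every $x\in(CS(f)+\psi(t)B^n)^c$ and $t\ge T$. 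The step I expect to be the main obstacle is precisely the regime $r\sim t$ in the kernel estimate, where the clean expansions of Lemmas \ref{expansion_sqrt}--\ref{expansion_smallo} degenerate and one must instead fall back on the monotonicity of $k_\ell$ and bare exponential-in-$t$ decay; relatedly, the cutoff $\ep_n$ has to be chosen dimension-dependent, since the factor $2^{n/2}$ in the target bound prevents any single universal cutoff from working for all $n$.
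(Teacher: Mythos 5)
Your proposal is correct, and the overall skeleton (principal term plus error term, Lemma \ref{error} for the error, a uniform kernel bound integrated against $f$, and the reduction to $\psi(t)<r\le t$ via finite propagation speed) matches the paper's proof. The one place where you genuinely diverge is the kernel estimate, and it is worth comparing. The paper never splits the range of $r$: it first applies the triangle inequality, $\lvert \tilde{k}_{\ell}(r,t)\rvert \le t\,k_{\ell+1}(s)+2k_{\ell}(s)$ with $s=\sqrt{t^2-r^2}/2$, and then observes that since each $k_{\ell}$ is increasing and $s$ is a decreasing function of $r$, this majorant is maximized over $r\ge\psi(t)$ at $r=\psi(t)$. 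So a single evaluation at $s=\sqrt{t^2-\psi(t)^2}/2\to\infty$, where Remark \ref{expansion_k} applies cleanly, bounds the kernel uniformly — the regime $r\sim t$ that you identified as the main obstacle simply never appears, and the exponential factor $\exp(-\psi(t)^2/(2(t+\sqrt{t^2-\psi(t)^2})))$ in the final bound comes from this single endpoint evaluation rather than from a pointwise-in-$r$ estimate. Your route instead keeps a pointwise bound, splitting at $r=\ep_n t$, exploiting the cancellation $\frac{t-2s}{s}=O(\ep_n^2)$ on the inner range and falling back on monotonicity plus bare exponential decay (against the subexponential target $t^{-n/2}e^{-\psi(t)^2/(4t)}$, using $\psi(t)=o(t)$) on the outer range. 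This is more laborious but equally valid, and it buys you a sharper prefactor: your two halves sum to $\frac{\ga_n 2^{n/2}}{8t^{n/2}}e^{-\psi(t)^2/(4t)}\|f\|_1$, comfortably below the stated $\frac{3\ga_n 2^{n/2}}{8t^{n/2}}$ bound (the paper's own principal-term bound carries the larger constant $3/4$, so your bookkeeping is in fact the tidier of the two). If you want the shorter argument, note that the triangle inequality discards the cancellation between $tk_{\ell+1}$ and $2k_{\ell}$ anyway, so there is no loss in also replacing $s$ by its maximum before expanding.
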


\begin{proof}
We give a proof for the even dimensional case. The argument works for the odd dimensional case. 

By Remark \ref{expansion_k}, if $t$ is large enough, then we have
\begin{align*}
\lvert \tilde{k}_{\frac{n}{2}} (r,t) \rvert 
&\leq t k_{\frac{n+2}{2}} \( \frac{\sqrt{t^2-\psi (t)^2}}{2} \) + 2k_{\frac{n}{2}} \( \frac{\sqrt{t^2-\psi (t)^2}}{2} \)\\ 
&= \frac{2^{n/2}}{t^{n/2}} \( 1 -\( \frac{\psi (t)}{t} \)^2 \)^{-n/4} \exp \( \frac{\sqrt{t^2 -\psi (t)^2}}{2} \) \\
&\quad \times \left[ \( 1 -\( \frac{\psi (t)}{t} \)^2 \)^{-1/2} \( 1 -\frac{n(n+2)}{4t}\( 1 -\( \frac{\psi (t)}{t} \)^2 \)^{-1/2} + O\( \frac{1}{t^2} \) \) \right. \\
&\quad \quad \left. + \( 1-\frac{n(n-2)}{4t} \( 1 -\( \frac{\psi (t)}{t} \)^2 \)^{-1/2} + O\( \frac{1}{t^2} \) \) \right] \\
&< \frac{3 \cdot 2^{n/2}}{t^{n/2}} \exp \( \frac{\sqrt{t^2 -\psi (t)^2}}{2} \)
\end{align*}
for any $r \geq \psi (t)$. 
Here, the first inequality follows from the triangle inequality and the monotonicity of $k_\ell$. 
For the last inequality, we took a large enough $T$ such that if $t \geq T$, then we have
\begin{align*}
& \( 1 -\( \frac{\psi (t)}{t} \)^2 \)^{-n/4} \left[ \( 1 -\( \frac{\psi (t)}{t} \)^2 \)^{-1/2} \( 1 -\frac{n(n+2)}{4t}\( 1 -\( \frac{\psi (t)}{t} \)^2 \)^{-1/2} + O\( \frac{1}{t^2} \) \)  \right. \\
&\quad \left. + \( 1-\frac{n(n-2)}{4t} \( 1 -\( \frac{\psi (t)}{t} \)^2 \)^{-1/2} + O\( \frac{1}{t^2} \) \) \right] 
<3 .
\end{align*}
Therefore, if $t$ is large enough, then we have
\begin{align*}
\mathcal{P} \( u(x,t) \) 
&< \frac{3 \ga_n 2^{n/2}}{4 t^{n/2}} \exp \( -\frac{\psi (t)^2}{2\( t+ \sqrt{t^2 -\psi (t)^2} \)} \)  \| f \|_1 \\
&< \frac{3 \ga_n 2^{n/2}}{4 t^{n/2}} \exp \( -\frac{\psi (t)^2}{4t} \) \| f \|_1 
\end{align*}
for any $x \in (CS(f) + \psi (t) B^n)^c$.

By Lemma \ref{error}, if $t$ is large enough, then we have
\[
\lvert \mathcal{E} \( u(x,t) \) \rvert 
\leq C(n) e^{-t/2} \( 1+t \)^n \| f \|_{W^{n/2,\infty}} 
<  \frac{3 \ga_n 2^{n/2}}{8 t^{n/2}} \exp \( -\frac{\psi (t)^2}{4t} \) \| f \|_1 
\]
for any $x \in (CS(f) + \psi (t) B^n)^c$.

Hence if $t$ is large enough, then we have
\[
u(x,t) < \frac{3 \ga_n 2^{n/2}}{8 t^{n/2}} \exp \( -\frac{\psi (t)^2}{4t} \) \| f \|_1  .
\]
for any $x \in (CS(f) + \psi (t) B^n)^c$.
\end{proof}

\begin{thm}
\label{min}
Let $f$ be as in Notation \ref{notation_main}.
There exists a constant $T \geq d_f^2 /(2n+4)$ such that if $t \geq T$, then we have the following inclusion relations:
\begin{enumerate}[$(1)$]
\item $\mathcal{M} ( u(\cdot, t) ) \subset A^\circ_f ( \sqrt{(2n+4)t}-d_f ,\sqrt{(2n+4)t} )$.
\item $\mathcal{M} ( -u (\cdot ,t) )  = \mathcal{M} ( \vert u (\cdot ,t) \vert )  \subset CS(f)$.
\end{enumerate}
\end{thm}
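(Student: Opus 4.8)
The plan is to combine the pointwise estimates of Propositions~\ref{lb_CS}, \ref{ub_A}, \ref{lb_A} and \ref{ub_E} with the localization of critical points in Theorem~\ref{crit1}. Fix once and for all an auxiliary function $\psi (t)$ that is of small order of $t$, satisfies $\sqrt{(2n+5)t}\le \psi (t)\le t$ for large $t$, and obeys $\psi (t)^2\ge 5t\log t$ for all sufficiently large $t$ (for instance $\psi (t)=\sqrt{5t\log t}$ when $t$ is large). The point of the last requirement is that the bound of Proposition~\ref{ub_E}, namely $\tfrac{3\ga_n 2^{n/2}}{8t^{n/2}}\exp(-\psi (t)^2/(4t))\|f\|_1$ together with its odd-dimensional analogue, is then $o(t^{-n/2-1})$ as $t\to\infty$, hence eventually strictly smaller than the $t^{-n/2-1}$-order lower bounds supplied by Propositions~\ref{lb_CS} and \ref{lb_A}. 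Note also that the integral formulas in Proposition~\ref{solution} give $\supp u(\cdot ,t)\subset\{x\in\R^n:\dist(x,\supp f)\le t\}$, so $u(\cdot ,t)$ is smooth with compact support and therefore attains its maximum and minimum over $\R^n$.

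For part (1), let $t$ be large. Proposition~\ref{lb_A} produces a point at which $u(\cdot ,t)>0$, so $\max u(\cdot ,t)>0$. Hence every $x_0\in\mathcal{M}(u(\cdot ,t))$ satisfies $u(x_0,t)\ne 0$, so $x_0\in\supp(u(\cdot ,t))^\circ$, and $\nabla u(x_0,t)=0$ because $x_0$ is a global maximum of a smooth function; thus $\mathcal{M}(u(\cdot ,t))\subset\mathcal{C}(u(\cdot ,t))$. By Proposition~\ref{ub_E} and the choice of $\psi$, on $(CS(f)+\psi (t)B^n)^c$ we have $|u(\cdot ,t)|<\max u(\cdot ,t)$ once $t$ is large, so $\mathcal{M}(u(\cdot ,t))\subset CS(f)+\psi (t)B^n$. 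Theorem~\ref{crit1} then yields $\mathcal{M}(u(\cdot ,t))\subset A^\circ_f(\sqrt{(2n+4)t}-d_f,\sqrt{(2n+4)t})\cup CS(f)$. Finally, Proposition~\ref{lb_CS} gives $u(\cdot ,t)<0$ on $CS(f)$, which is impossible at a maximum point since $\max u(\cdot ,t)>0$; therefore $\mathcal{M}(u(\cdot ,t))\subset A^\circ_f(\sqrt{(2n+4)t}-d_f,\sqrt{(2n+4)t})$.

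For part (2), Proposition~\ref{lb_CS} gives $\min u(\cdot ,t)<0$, so, exactly as above, every $x_1\in\mathcal{M}(-u(\cdot ,t))$ lies in $\supp(u(\cdot ,t))^\circ$ with $\nabla u(x_1,t)=0$, whence $\mathcal{M}(-u(\cdot ,t))\subset\mathcal{C}(u(\cdot ,t))$. Proposition~\ref{ub_E} with our $\psi$ again shows that $u(\cdot ,t)$ cannot attain its minimum, which is of order $-t^{-n/2-1}$, on $(CS(f)+\psi (t)B^n)^c$, so $\mathcal{M}(-u(\cdot ,t))\subset CS(f)+\psi (t)B^n$ and Theorem~\ref{crit1} gives $\mathcal{M}(-u(\cdot ,t))\subset A^\circ_f(\sqrt{(2n+4)t}-d_f,\sqrt{(2n+4)t})\cup CS(f)$. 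But Proposition~\ref{lb_A} gives $u(\cdot ,t)>0$ on $A^\circ_f(\sqrt{(2n+4)t}-d_f,\sqrt{(2n+4)t})$, which is incompatible with a minimum point; hence $\mathcal{M}(-u(\cdot ,t))\subset CS(f)$. To get $\mathcal{M}(|u(\cdot ,t)|)=\mathcal{M}(-u(\cdot ,t))$ it suffices to check $\max u(\cdot ,t)<-\min u(\cdot ,t)$: by part (1) and Proposition~\ref{ub_A}, $\max u(\cdot ,t)<\tfrac{7\ga_n 2^{n/2}}{10t^{n/2+1}}e^{-(n+2)/2}\|f\|_1$ (with the corresponding odd-dimensional bound), while Proposition~\ref{lb_CS} gives $-\min u(\cdot ,t)>\tfrac{n\ga_n 2^{n/2}}{5t^{n/2+1}}\|f\|_1$; since $\tfrac{7}{2n}e^{-(n+2)/2}<1$ for every $n\ge 1$, indeed $\max u(\cdot ,t)<-\min u(\cdot ,t)$. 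Then $\max|u(\cdot ,t)|=-\min u(\cdot ,t)$ is not attained as a value of $u(\cdot ,t)$, so $|u(x,t)|=\max|u(\cdot ,t)|$ forces $u(x,t)=\min u(\cdot ,t)$, giving the asserted equality.

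The step I expect to demand the most care is the coordination of $\psi$ across all the invoked results: it must grow faster than $\sqrt t$ so that the Gaussian factor in Proposition~\ref{ub_E} beats the polynomial $t^{-n/2-1}$ lower bounds, while remaining of small order of $t$ with $\psi (t)\le t$ so that Theorem~\ref{crit1} applies; and the final identity genuinely uses the explicit numerical constants carried through Propositions~\ref{lb_CS} and \ref{ub_A} (the inequality $\tfrac{7}{2n}e^{-(n+2)/2}<1$ being least comfortable at $n=1$, where it still holds). As in the preceding proofs, I would present the argument for even $n$ and note that the odd-dimensional case is handled identically.
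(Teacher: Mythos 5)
Your proof is correct and follows essentially the same strategy as the paper: localize the extrema via Proposition \ref{ub_E} against the quantitative bounds of Propositions \ref{lb_CS} and \ref{lb_A}, use the critical-point localization (you route through Theorem \ref{crit1}, the paper invokes Propositions \ref{prop_positivity_crit} and \ref{prop_negativity_crit} directly), and finish with the sign information and the numerical comparison $\tfrac{7}{10}e^{-(n+2)/2}<\tfrac{n}{5}$, which is exactly the paper's remark $2e^{3/2}>7$. Your explicit choice $\psi(t)\sim\sqrt{5t\log t}$ is in fact slightly more careful than the paper's stated condition on $\psi$, since one really does need $\psi(t)^2/(4t)-\log t\to+\infty$ for the Gaussian factor in Proposition \ref{ub_E} to beat the $t^{-n/2-1}$ lower bounds.
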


\begin{proof}
Let $\psi (t)$ be of small order of $t$ as $t$ goes to infinity such that $1/ \psi (t)$ is of small order of $1/\sqrt{t}$ as $t$ goes to infinity, and $\sqrt{(2n+5)t} \leq \psi (t) \leq t$ for any $t>0$. We take a large enough $T$ such that if $t \geq T$, then we have $\sqrt{(2n+4)t} -d_f \geq \sqrt{2nt}$. 

(1) By the behavior of strictly increasing and decreasing of $u (\cdot ,t)$ in Propositions \ref{prop_positivity_crit} and \ref{prop_negativity_crit}, respectively, if $t$ is large enough, then there is no spatial maximum point of $u$ in $A_f (0, \sqrt{(2n+4)t}-d_f) \cup A_f (\sqrt{(2n+4)t} ,\psi (t))$. 

Propositions \ref{prop_negativity_null} and \ref{prop_positivity_null} guarantee the negativity and positivity of $u(\cdot ,t)$ on $CS(f)$ and $A_f (\sqrt{(2n+4)t}-d_f, \sqrt{(2n+4)t} )$, respectively. Thus, if $t$ is large enough, then there is no spatial maximum point of $u$ in $CS(f)$. 

By Propositions \ref{lb_A} and \ref{ub_E}, if $t$ is large enough, then there is no spatial maximum point of $u$ in the complement of $CS(f) +\psi (t) B^n$. Hence, we obtain the first assertion.

(2) By the behavior of strictly increasing and decreasing of $u(\cdot ,t)$ in Propositions \ref{prop_positivity_crit} and \ref{prop_negativity_crit}, respectively, if $t$ is large enough, then there is no spatial maximum point of $\vert u \vert$ in $A_f (0, \sqrt{(2n+4)t}-d_f) \cup A_f (\sqrt{(2n+4)t} ,\psi (t))$. 

By Propositions \ref{lb_CS} and \ref{ub_A}, if $t$ is large enough, then we have
\[
\min \left\{ -u(x,t)  \lvert x \in CS(f) \right\} \right. 
> \max \left\{ u(x,t) \lvert x \in A_f \( \sqrt{(2n+4)t}-d_f, \sqrt{(2n+4)t} \) \right\} \right. .
\]
Here, we remark $2e^{3/2} >7$. Propositions \ref{prop_negativity_null} and \ref{prop_positivity_null} guarantee the negativity and positivity of $u(\cdot ,t)$ on $CS(f)$ and $A_f (\sqrt{(2n+4)t}-d_f, \sqrt{(2n+4)t} )$, respectively. Thus, there is no spatial maximum point of $\vert u \vert$ in $A_f (\sqrt{(2n+4)t}-d_f, \sqrt{(2n+4)t} )$.

By Propositions \ref{lb_CS} and \ref{ub_E}, if $t$ is large enough, then there is no spatial maximum point of $\vert u \vert$ in the complement of $CS(f) + \psi (t) B^n$. Hence if $t$ is large enough, then all of spatial maximum points of $\lvert u \rvert$ are contained in $CS(f)$.

The negativity of $u$ on $CS(f)$ completes the proof of the second assertion.
\end{proof}

\begin{cor}
\label{centroid}
Let $f$ be as in Notation \ref{notation_main}.
We have
\[
\sup \left\{ \lvert x- m_f \rvert \lvert x \in \mathcal{M} \( - u(\cdot ,t)  \) \right\} \right. 
=O \( \frac{1}{t} \) 
\]
as $t$ goes to infinity.
\end{cor}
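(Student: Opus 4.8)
The plan is to deduce this corollary directly from Theorem~\ref{min}~(2) and Theorem~\ref{crit2}; no new estimates are needed. First I would record that, for each fixed $t>0$, the solution $u(\cdot ,t)$ has compact support: from Proposition~\ref{solution} every constituent of $u(\cdot ,t)$ is supported in $\{x\in\R^n : \dist(x,\supp f)\leq t\}$, which reflects the finite propagation speed. Consequently $u(\cdot ,t)$ attains its minimum and $\mathcal{M}(-u(\cdot ,t))$ is non-empty and contained in that compact set. Moreover, by Proposition~\ref{lb_CS}, after a large time $u(\cdot ,t)$ is strictly negative on $CS(f)$; since $u(\cdot ,t)$ is continuous, it is negative on an open neighbourhood of $CS(f)$, so $CS(f)\subset (\supp u(\cdot ,t))^\circ$.

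Next I would invoke Theorem~\ref{min}~(2): there is a time after which $\mathcal{M}(-u(\cdot ,t))\subset CS(f)$. Any point $x$ at which $u(\cdot ,t)$, which is $C^1$ on all of $\R^n$, attains its global minimum satisfies $\nabla u(x,t)=0$; combined with the previous paragraph, such an $x$ lies in $(\supp u(\cdot ,t))^\circ$, hence $x\in\mathcal{C}(u(\cdot ,t))$. Therefore, for all sufficiently large $t$,
\[
\mathcal{M}(-u(\cdot ,t))\ \subset\ \mathcal{C}(u(\cdot ,t))\cap CS(f)\ \subset\ \mathcal{C}(u(\cdot ,t))\cap\bigl(CS(f)+(\sqrt{(2n+4)t}-d_f)B^n\bigr),
\]
the last inclusion holding because $\sqrt{(2n+4)t}-d_f\geq 0$ once $t\geq d_f^2/(2n+4)$.

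Finally I would apply Theorem~\ref{crit2}, which gives
\[
\sup\bigl\{\,|x-m_f| : x\in\mathcal{C}(u(\cdot ,t))\cap(CS(f)+(\sqrt{(2n+4)t}-d_f)B^n)\,\bigr\}=O(1/t).
\]
Taking the supremum over the smaller set $\mathcal{M}(-u(\cdot ,t))$ only decreases it, which yields $\sup\{\,|x-m_f| : x\in\mathcal{M}(-u(\cdot ,t))\,\}=O(1/t)$, as asserted. There is no real analytic obstacle here: all the delicate work has already been carried out in Sections~3 and~4. The only points needing a little care are that the cold spots are genuine interior spatial critical points — which is precisely why the strict negativity of $u(\cdot,t)$ on $CS(f)$ from Proposition~\ref{lb_CS} is invoked — and that $\mathcal{M}(-u(\cdot ,t))$ is non-empty, which follows from the compactness of $\supp u(\cdot ,t)$.
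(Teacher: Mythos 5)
Your proposal is correct and follows exactly the paper's route: Theorem \ref{min}(2) places the cold spots in $CS(f)$, they are interior critical points, and Theorem \ref{crit2} then gives the $O(1/t)$ bound. The extra care you take about non-emptiness of $\mathcal{M}(-u(\cdot,t))$ and about the cold spots lying in $\supp(u(\cdot,t))^\circ$ (so that they belong to $\mathcal{C}(u(\cdot,t))$ as defined) is a sound filling-in of details the paper leaves implicit.
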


\begin{proof}
By Theorem \ref{min}, if $t$ is large enough, then we have
\[
\mathcal{M} \( -u (\cdot ,t) \) 
\subset \mathcal{C} \( u(\cdot ,t) \) \cap CS(f) .
\]
Hence Theorem \ref{crit2} implies the conclusion.
\end{proof}

\begin{prop}
\label{convex}
Let $f$ be as in Notation \ref{notation_main}.
There exists a constant $T >0$ such that if $t \geq T$, then the function $u(\cdot ,t)$ is strictly convex on $CS(f)$.
\end{prop}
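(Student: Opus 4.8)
The plan is to prove that, for all sufficiently large $t$, the Hessian $D^2 u(\cdot ,t)$ is positive definite at every point of $CS(f)$; equivalently, that $(\ome \cdot \nabla )^2 u(x,t) > 0$ for every $\ome \in S^{n-1}$ and every $x \in CS(f)$, with a bound uniform in $\ome$ and $x$. Since $CS(f)$ is convex and $u(\cdot ,t)$ is smooth, this forces $u(\cdot ,t)$ to be strictly convex on $CS(f)$: for distinct $x_0,x_1\in CS(f)$ the function $s\mapsto u\bigl((1-s)x_0+sx_1,t\bigr)$ has positive second derivative on $[0,1]$ because $(1-s)x_0+sx_1\in CS(f)$. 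It also yields uniqueness of the time-delayed cold spot when combined with Theorem \ref{min}(2).

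First I would take $T > d_f$, so that for $x \in CS(f)$ and $y \in \supp f \subset CS(f)$ one has $r = |x-y| \le d_f < t$ and hence $B^n_t(x) \supset \supp f$; then the integral defining the principal term $\mathcal{P}\bigl((\ome\cdot\nabla)^2 u(x,t)\bigr)$ in Lemma \ref{derivative} runs over $\supp f$, where $0 \le r \le d_f$. On this range Lemma \ref{expansion_d}(1) applies to both kernels occurring in that integrand: for even $n$,
\[
e^{-t/2}\tilde{k}_{\frac{n+2}{2}}(r,t) = -\frac{(n+2)\,2^{(n+4)/2}}{2\,t^{(n+4)/2}}\Bigl(1+O\bigl(\tfrac1t\bigr)\Bigr), \qquad e^{-t/2}\tilde{k}_{\frac{n+4}{2}}(r,t) = -\frac{(n+4)\,2^{(n+6)/2}}{2\,t^{(n+6)/2}}\Bigl(1+O\bigl(\tfrac1t\bigr)\Bigr),
\]
and both are negative; the odd-dimensional asymptotics have the same powers of $t$ (up to a $1/\sqrt{\pi}$) with kernel indices $\tfrac{n+1}{2}$ and $\tfrac{n+3}{2}$. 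In the integrand $\tilde{k}_{\frac{n+4}{2}}(r,t)\,(\ome\cdot(x-y))^2 - 4\tilde{k}_{\frac{n+2}{2}}(r,t)$, the first term is $\le 0$ after multiplying by $e^{-t/2}$, but since $(\ome\cdot(x-y))^2 \le r^2 \le d_f^2$ its absolute value is $O\bigl(d_f^2\,t^{-(n+6)/2}\bigr)$; the second term is positive and of size comparable to $t^{-(n+4)/2}$, hence dominates once $t$ exceeds a threshold depending only on $n$ and $d_f$. Integrating against $f \ge 0$ gives $\mathcal{P}\bigl((\ome\cdot\nabla)^2 u(x,t)\bigr) \ge c(n)\,\|f\|_1\, t^{-(n+4)/2}$ for some $c(n)>0$, uniformly in $\ome \in S^{n-1}$ and $x \in CS(f)$.

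Next I would absorb the error term: by Lemma \ref{error}, $\lvert\mathcal{E}\bigl((\ome\cdot\nabla)^2 u(x,t)\bigr)\rvert \le C(n)\,e^{-t/2}(1+t)^{n+2}\|f\|_{W^{(n+4)/2,\infty}}$ in the even case (with the analogous bound, power $(1+t)^{n+1}$ and norm $W^{(n+3)/2,\infty}$, in the odd case), which decays exponentially and is therefore eventually smaller than half the lower bound on the principal term. Adding the two estimates, $(\ome\cdot\nabla)^2 u(x,t) = \mathcal{P}\bigl((\ome\cdot\nabla)^2 u(x,t)\bigr) + \mathcal{E}\bigl((\ome\cdot\nabla)^2 u(x,t)\bigr) > 0$ for all $\ome \in S^{n-1}$ and all $x \in CS(f)$, provided $t \ge T$ with $T$ chosen as above; this is exactly positive-definiteness of $D^2 u(\cdot ,t)$ on $CS(f)$, so strict convexity follows. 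I do not anticipate a genuine obstacle: the whole argument is driven by the $[0,d_f]$ kernel asymptotics of Lemma \ref{expansion_d}, and the only point requiring real care is verifying that the $-4\tilde{k}_{\frac{n+2}{2}}$ contribution beats the $\tilde{k}_{\frac{n+4}{2}}(\ome\cdot(x-y))^2$ contribution uniformly in $\ome$ and $x$ — which it does, being a full power of $t$ larger while the competing factor $(\ome\cdot(x-y))^2$ stays bounded by $d_f^2$.
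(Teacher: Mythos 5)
Your proposal is correct and follows essentially the same route as the paper: restrict the kernel integral to $\supp f$ where $0\le r\le d_f$, apply the $[0,d_f]$ asymptotics of Lemma \ref{expansion_d} to see that the positive contribution $-4\tilde{k}_{\frac{n+2}{2}}$ of order $t^{-(n+4)/2}$ dominates the negative contribution $\tilde{k}_{\frac{n+4}{2}}(\ome\cdot(x-y))^2$ of order $d_f^2\,t^{-(n+6)/2}$, and then absorb the exponentially small error term from Lemma \ref{error}. The only cosmetic difference is that the paper bounds $(\ome\cdot(x-y))^2\le r^2$ and expands the combined kernel $\tilde{k}_{\frac{n+4}{2}}r^2-4\tilde{k}_{\frac{n+2}{2}}$ at once, while you estimate the two pieces separately; the resulting lower bound of order $t^{-(n/2+2)}\|f\|_1$ is the same.
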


\begin{proof}
We give a proof for the even dimensional case. The argument works for the odd dimensional case. 

We take $T \geq d_f$, which implies that if $t \geq T$, then, for any $x \in CS(f)$, the ball $B^n_t(x)$ contains $CS(f)$.

By Lemma \ref{expansion_d}, there exists a large enough $T$ such that if $t \geq T$, then $\tilde{k}_{\frac{n+4}{2}} (r,t)$ is negative for any $0\leq r \leq d_f$, and we have
\[
e^{-t/2} \( \tilde{k}_{\frac{n+4}{2}} (r,t) r^2 - 4 \tilde{k}_{\frac{n+2}{2}} (r,t) \) 
=\frac{(n+2)2^{n/2+3}}{t^{n/2+2}} \( 1+ O \( \frac{1}{t} \) \) 
> \frac{(n+2)2^{n/2+3}}{2 t^{n/2+2}} .
\]
Thus, by Lemma \ref{derivative}, if $t$ is large enough, then we have
\begin{align*}
\mathcal{P} \( \( \ome \cdot \nabla \)^2 u(x,t) \) 
&>\frac{\ga_n}{64} e^{-t/2} \int_{B^n_t(x)} \( \tilde{k}_{\frac{n+4}{2}} (r,t) r^2 - 4 \tilde{k}_{\frac{n+2}{2}} (r,t) \)  f(y) dy\\
&>\frac{(n+2) \ga_n 2^{n/2+3}}{128 t^{n/2+2}} \| f \|_1
\end{align*}
for any $\ome \in S^{n-1}$ and $x \in CS(f)$. 

By Lemma \ref{error}, if $t$ is large enough, then we have
\begin{align*}
\lvert \mathcal{E} \( \( \ome \cdot \nabla \)^2 u(x,t) \) \rvert
\leq C(n) e^{-t/2} \( 1+t \)^{n+2} \| f \|_{W^{(n+4)/2 ,\infty}} 
<\frac{(n+2) \ga_n 2^{n/2+3}}{256 t^{n/2+2}} \| f \|_1
\end{align*}
for any $\ome \in S^{n-1}$ and $x \in CS(f)$. 

Hence if $t$ is large enough, we have
\[
 \( \ome \cdot \nabla \)^2 u(x,t) 
>\frac{(n+2) \ga_n 2^{n/2+3}}{256 t^{n/2+2}} \| f \|_1
\]
for any $\ome \in S^{n-1}$ and $x \in CS(f)$. 
\end{proof}

Theorem \ref{min} and Proposition \ref{convex} lead to the uniqueness of a spatial minimum point of $u$.

\begin{cor}
\label{unique}
Let $f$ and $T$ be as in Theorem \ref{min}.
There exists a constant $T' \geq T$ such that if $t \geq T'$, then the function $u (\cdot ,t)$ has a unique minimum point.
\end{cor}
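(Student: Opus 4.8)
The plan is to deduce uniqueness from two facts already established: by Theorem~\ref{min}~(2), every spatial minimum point of $u(\cdot,t)$ lies in $CS(f)$ once $t$ is large, while by Proposition~\ref{convex}, the function $u(\cdot,t)$ is strictly convex on $CS(f)$ once $t$ is large; since a strictly convex function on a convex set has at most one minimizer, the conclusion follows immediately.

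Concretely, I would first fix the threshold. Let $T_0>0$ be the constant furnished by Proposition~\ref{convex}, and set $T'=\max\{T,T_0\}$, so that for every $t\ge T'$ we simultaneously have $\mathcal{M}(-u(\cdot,t))\subset CS(f)$ and $(\omega\cdot\nabla)^2 u(x,t)>0$ for all $\omega\in S^{n-1}$ and all $x\in CS(f)$. I would then record that a global minimizer exists at all: by the explicit formula in Proposition~\ref{solution}, the function $u(\cdot,t)$ is continuous with support contained in $\supp f + tB^n$, hence compactly supported, and by Proposition~\ref{lb_CS} it is strictly negative on $CS(f)$; therefore $u(\cdot,t)$ attains its minimum over $\R^n$, so $\mathcal{M}(-u(\cdot,t))\neq\emptyset$.

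For the uniqueness step, fix $t\ge T'$ and suppose $p_1,p_2\in\mathcal{M}(-u(\cdot,t))$ with $p_1\neq p_2$. By Theorem~\ref{min}~(2), both points lie in $CS(f)$, and since $CS(f)$ is convex the whole segment $[p_1,p_2]$ lies in $CS(f)$. Put $\omega=(p_2-p_1)/|p_2-p_1|\in S^{n-1}$ and $\varphi(s)=u(p_1+s\omega,t)$ for $s\in[0,|p_2-p_1|]$. Then $\varphi''(s)=(\omega\cdot\nabla)^2 u(p_1+s\omega,t)>0$ on $[0,|p_2-p_1|]$ by Proposition~\ref{convex}, so $\varphi$ is strictly convex; in particular its value at the midpoint is strictly less than $\frac{1}{2}(\varphi(0)+\varphi(|p_2-p_1|))=\min u(\cdot,t)$, which is absurd. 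Hence $u(\cdot,t)$ has a unique spatial minimum point for every $t\ge T'$.

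I do not expect any genuine obstacle here: the corollary is a soft consequence of Theorem~\ref{min} and Proposition~\ref{convex}, and the argument reduces to the elementary fact that a strictly convex function on a convex set has a unique minimizer. The only mild points of care are the bookkeeping of the constants (taking $T'=\max\{T,T_0\}$ rather than reproving anything) and the one-line remark, needed so that the reductio has something to contradict, that a global minimum of $u(\cdot,t)$ is actually attained.
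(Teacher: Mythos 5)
Your proof is correct and follows exactly the route the paper intends: the paper's own justification is the single sentence preceding the corollary, namely that Theorem \ref{min} (localizing the minimum set in $CS(f)$) combined with Proposition \ref{convex} (strict convexity of $u(\cdot,t)$ on the convex set $CS(f)$) yields uniqueness. Your write-up merely makes explicit the standard segment/midpoint argument and the attainment of the minimum, both of which are unobjectionable.
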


\section{Appendices}

\subsection{Frequently used Taylor's expansions}
Let us list up frequently used Taylor's expansions:
\begin{itemize}
\item As $s$ tends to zero, we have
\begin{equation}
\label{app_taylor}
(1-s^2)^\al = 1  - \al s^2 + \frac{\al (\al-1)}{2} s^4 + O \( s^6 \) .
\end{equation}
\item If a function $\psi (t)$ is of small order of $t$ as $t$ goes to infinity, then we have 
\begin{equation}
\label{app_taylor1}
	\( t^2-\psi (t)^2 \)^\al = t^{2\al}
		\( 1 -\al \( \frac{\psi (t)}{t} \)^2 + \frac{\al (\al-1)}{2} \( \frac{\psi (t)}{t} \)^4 
		+ O\( \( \frac{\psi (t)}{t} \)^6 \) \)
\end{equation}
as $t$ goes to infinity.
\item If a function $\f (t)$ is of small order of $\sqrt{t}$ as $t$ goes to infinity, then, as $t$ goes to infinity, we have the following expansions:
\begin{equation}
\label{app_taylor2}
\exp\( \frac{-t+\sqrt{t^2-\f (t)^2}}{2} \)
= 1 - \frac{\f (t)^2}{4t} + O\( \frac{\f (t)^4}{t^2}  \) ,\\
\end{equation}
\end{itemize}

\subsection{Properties of modified Bessel functions}
In this section, we collect some properties of the modified Bessel functions
\begin{equation}
\label{Bessel}
I_\ell (s)=\sum_{j=0}^\infty \frac{1}{j ! \Gamma (j+\ell +1)}
		\(\frac{s}{2}\)^{2j+\ell}
\end{equation}
used in this paper from [NO]:
\begin{itemize}
\item Direct computation shows the following recursion:
\begin{equation}
\label{Bessel_2}
	I_0' (s)=I_1(s),
	\ I_1'(s)=I_0(s)-\frac{1}{s}I_1(s),
	\ \frac{1}{s} \frac{d}{ds}\( \frac{I_\ell(s)}{s^\ell} \) =\frac{I_{\ell+1}(s)}{s^{\ell +1}}.
\end{equation}
\item The modified Bessel function $I_\nu (s)$ has the expansion
\begin{equation}
\label{Bessel_3}
\begin{split}
I_\ell (s)
=\frac{e^s}{\sqrt{2\pi s}}
	&\( 1-\frac{(\ell -1/2)(\ell +1/2)}{2s}
		+\frac{(\ell -3/2) (\ell -1/2)  (\ell +1/2) (\ell +3/2)}{8s^2} \right. \\
	&\quad \left. -\cdots+(-1)^i \frac{1}{i !2^i s^i}
		\prod_{j=1}^i \( \ell -(j-1/2) \) \( \ell +(j-1/2)\) 
	 +O\( \frac{1}{s^{i+1}} \) \)
\end{split}
\end{equation}
as $s$ goes to infinity.
\end{itemize}

\subsection{Geometric remarks}

Let $K$ be a convex body in $\R^n$, and
\begin{equation}
\rN K = \left\{ ( \xi , \nu ) \lvert \xi \in \pd K,\ \nu \in S^{n-1} ,\ \nu^\perp_- + \xi \supset K \right\} \right. .
\end{equation}
Fix two constants $\rho_2 > \rho_1 \geq 0$, and let $A_K (\rho_1 ,\rho_2)$ be the closed annulus
\begin{equation}
A_K \( \rho_1 ,\rho_2 \) = \left. \left\{ x \in \overline{\R^n \sm K} \rvert \rho_1 \leq \dist \( x , K\) \leq \rho_2 \right\}  .
\end{equation}
In this section, we show the geometric remarks needed in this paper.

\begin{rem}
\label{inscribed_ball}
Let $\Ome$ be a body (the closure of a bounded open set) in $\R^n$, $i_\Ome$ an incenter of $\Ome$, $\rho_\Ome$ the inradius of $\Ome$, and $K$ the convex hull of $\Ome$. We have
\[
B^n_{\rho_\Ome /2} \( i_\Ome \) \subset \bigcap_{(\xi ,\nu ) \in \rN K} \( \nu^\perp_- +  \xi -\frac{\rho_\Ome}{2} \nu  \) .
\] 
\end{rem}

\begin{proof}
Suppose that there is a point $p$ with  
\[
p \in B^n_{\rho_\Ome /2} \( i_\Ome \) \cap \( \bigcap_{(\xi ,\nu ) \in \rN K} \( \nu^\perp_- +  \xi -\frac{\rho_\Ome}{2} \nu  \) \)^c .
\]
There is a $( \xi , \nu ) \in \rN K$ with $(p - ( \xi -( \rho_\Ome /2) \nu ) ) \cdot \nu >0$.
Then, we have
\[
\frac{\rho_\Ome}{2} 
\leq \dist \( p , \pd \Ome \) 
\leq \dist \( p , \pd K \) 
< \frac{\rho_\Ome}{2} ,
\]
which is a contradiction.
\end{proof}

\begin{rem}\label{correspondence}
Let
\[
\Phi \( ( \xi, \nu ),\rho \) = \xi + \rho \nu ,\ \( (\xi ,\nu ) ,\rho \) \in \rN K \times \left[ \rho_1 ,\rho_2 \right] .
\]
\begin{enumerate}[$(1)$]
\item The image $\Phi ( \rN K \times [ \rho_1 ,\rho_2 ] )$ is contained in $A_K (\rho_1 ,\rho_2)$.
\item The correspondence $\Phi$ is a surjection from $\rN K \times [ \rho_1 , \rho_2 ]$ to $A_K (\rho_1 ,\rho_2)$.
\item The correspondence $\Phi$ is an injection from $\rN K \times [ \rho_1 , \rho_2 ]$ to $A_K (\rho_1 ,\rho_2)$.
\end{enumerate}
\end{rem}

\begin{proof}
(1) Take $( ( \xi , \nu ) , \rho)$ from $\rN K \times [ \rho_1 ,\rho_2 ]$. 
Since $\xi$ is a point on the boundary of $K$, we have
\[
\dist \( \xi +\rho \nu , \pd K \) \leq \lvert \xi + \rho \nu -\xi \rvert = \rho \leq \rho_2 .
\]
There is a point $\xi'$ on the boundary of $K$ such that $\vert \xi +\rho \nu -\xi' \vert = \dist (\xi +\rho \nu , \pd K )$. 
Then, the convexity of $K$ implies
\[
\lvert \xi +\rho \nu - \xi' \rvert^2 
= \lvert \xi -\xi' \rvert^2 +2 \rho \( \xi - \xi' \) \cdot \nu + \rho^2 
\geq \rho^2 \geq \rho_1^2 .
\]
Hence $\Phi$ is a map from $\mathrm{N} K \times [ \rho_1 ,\rho_2 ]$ to $A_K (\rho_1 ,\rho_2)$.

(2)  Fix a point $x$ from $A_K (\rho_1 ,\rho_2)$. There is a point $\xi$ on the boundary of $K$ such that $\vert x -\xi \vert = \dist (x, \pd K)$. Then, the convexity of $K$ guarantees $( \xi , ( x-\xi ) / \vert x- \xi \vert ) \in \rN K$, and we have
\[
\Phi \( \( \xi, \frac{x-\xi}{\lvert x-\xi \rvert} \) , \lvert x -\xi \rvert \) = \xi + \frac{x-\xi}{\lvert x-\xi \rvert} \lvert x -\xi \rvert \ =x .
\]
Hence $\Phi$ is a surjection from $\rN K \times [ \rho_1 ,\rho_2 ]$ to $A_K (\rho_1 ,\rho_2)$.

(3) Take $( ( \xi , \nu ), \rho)$ and $( ( \xi' , \nu' ) ,\rho')$ from $\rN K \times [ \rho_1 ,\rho_2 ]$. In order to show the injectivity of $\Phi$, we assume $\xi + \rho \nu = \xi' +\rho' \nu'$. If $\xi \neq \xi'$, then $K$ is contained in the intersection of two half spaces $\nu^\perp_- + ( \xi -\rho' \nu' )$ and $\nu'^\perp_- + ( \xi' -\rho \nu )$, which contradicts to the fact that $\xi$ and $\xi'$ are on the boundary of $K$. Therefore, we get $\xi = \xi'$. Then, Euclid's parallel postulate implies $\nu = \nu'$ and $\rho = \rho'$.
Hence $\Phi$ is an injection from $\rN K \times [ \rho_1 , \rho_2 ]$ to $A_K (\rho_1 ,\rho_2)$.
\end{proof}


\no Shigehiro SAKATA

\no E-mail: sakata@cc.miyazaki-u.ac.jp

\no Address: Faculty of Education, University of Miyazaki, 1-1 Gakuen Kibana-dai West, Miyazaki city, Miyazaki prefecture, 889-2192, Japan\\

\no Yuta WAKASUGI

\no E-mail: wakasugi.yuta.vi@ehime-u.ac.jp

\no Address: Graduate School of Science and Engineering,
Ehime University, 3, Bunkyo-cho, Matsuyama, Ehime, 790-8577, Japan

\end{document}